\documentclass[11pt]{article}
\pdfoutput=1
\usepackage{fullpage}
\usepackage{amsmath, amsthm}
\usepackage{bbm}
\usepackage{hyperref}
\usepackage{graphicx}
\usepackage{bm}
\usepackage{subcaption}
\usepackage{float}
\usepackage{listings}
\usepackage{geometry}
\usepackage[utf8x]{inputenc}
\usepackage{verbatim}
\usepackage{rotating}
\usepackage{lscape}
\usepackage{dsfont}
\usepackage{amsfonts}
\usepackage{color}
\usepackage{xr}
\usepackage{authblk}

\setlength\parindent{0pt}
\geometry{left=2cm,right=2cm,top=2cm, bottom=2cm}

\newtheorem{theorem}{Theorem}
\newtheorem{definition}{Definition}
\newtheorem{lemma}{Lemma}
\newtheorem{remark}{Remark}

\newtheorem{proposition}{Proposition}

\newcommand{\bas}[1]{\begin{align*}#1\end{align*}}
\newcommand{\ba}[1]{\begin{align}#1\end{align}}
\newcommand{\beq}[1]{\begin{equation}#1\end{equation}}
\newcommand{\bsplt}[1]{\begin{split}#1\end{split}}
\newtheorem{corollary}{Corollary}[theorem]

\newcommand{\cE}{\mathcal{E}}
\newcommand{\cF}{\mathcal{F}}
\newcommand{\cL}{\mathcal{L}}
\newcommand{\cN}{\mathcal{N}}

\newcommand{\cS}{\mathcal{S}}
\newcommand{\cX}{\mathcal{X}}
\newcommand{\cY}{\mathcal{Y}}

\newcommand{\bA}{\mathbb{A}}
\newcommand{\bB}{\mathbb{B}}
\newcommand{\bE}{\mathbb{E}}
\newcommand{\bR}{\mathbb{R}}

\newcommand{\Xt}{\tilde{X}}
\newcommand{\mut}{\tilde{\bmu}}
\newcommand{\bmu}{\bm{\mu}}
\newcommand{\ip}[1]{\langle #1 \rangle}
\newcommand{\diag}{\text{diag}}

\newcommand{\rmax}{R_{\max}}
\newcommand{\rmin}{\ensuremath R_{\min}}

\newcommand{\gamfun}[1]{\Gamma\left(#1\right)} 
\newcommand{\half}[1]{\frac{#1}{2}}
\newcommand{\rd}{\color{red}}

\newcommand{\bk}{\color{black}}

\newcommand{\norm}[1]{\left\lVert#1\right\rVert}
\newcommand{\bX}{\mathbf{X}}

\begin{document}

\title{
Convergence Analysis of Gradient EM \\
for Multi-component Gaussian Mixture
}
\author[1]{
Bowei Yan}
\author[1]{Mingzhang Yin}
\author[1]{Purnamrita Sarkar
}
\affil[1]{University of Texas at Austin}
\date{}
\maketitle

\begin{abstract}
In this paper, we study convergence properties of the gradient Expectation-Maximization algorithm~\cite{lange1995gradient} for Gaussian Mixture Models for general number of clusters and mixing coefficients. We derive the convergence rate depending on the mixing coefficients, minimum and maximum pairwise distances between the true centers and dimensionality and number of components; and obtain a near-optimal local contraction radius. While there have been some recent notable works that derive local convergence rates for EM in the two equal mixture symmetric GMM, in the more general case, the derivations need structurally different and non-trivial arguments. We use recent tools from learning theory and empirical processes to achieve our theoretical results.
\end{abstract}
\section{Introduction}

Proposed by \cite{dempster1977maximum} in 1977, the Expectation-Maximization (EM) algorithm is a powerful tool for statistical inference in latent variable models. 
A famous example is the parameter estimation problem under parametric mixture models. In such models, data  is generated from a mixture of a known family of parametric distributions. The mixture component from which a datapoint is generated from can be thought of as a latent variable.

Typically the marginal data log-likelihood (which integrates the latent variables out) is hard to optimize, and hence EM iteratively optimizes a lower bound of it and obtains a sequence of estimators. This consists of two steps. In the expectation step (E-step) one computes the expectation of the complete data likelihood with respect to the posterior distribution of the unobserved mixture memberships evaluated at the current parameter estimates. In the maximization step (M-step) one this expectation is maximized to obtain new estimators. 
EM 
always improves the objective function. While it is established in~\cite{conniffe1987expected} that the true parameter vector is the global maximizer of the log-likelihood function, there has been much effort to understand the behavior of the local optima obtained via EM.  

When the exact M-step is burdensome, a popular variant of EM, named Gradient EM is widely used. The idea here is to take a gradient step towards the maxima of the expectation computed in the E-step. \cite{lange1995gradient} introduces a gradient algorithm using one iteration of Newton's method and shows the local properties of the gradient EM are almost identical with those of the EM. 

Early literature \cite{wu1983convergence, xu1996convergence} mostly focuses on the convergence to the stationary points or local optima. In~\cite{wu1983convergence} it is proven that the sequence of estimators in EM converges to stationary point when the lower bound function from E-step is continuous. In addition, some conditions are derived under which EM converges to local maxima instead of saddle points; but these are typically hard to check. A link between EM and gradient methods is forged in~\cite{xu1996convergence} via a projection matrix and  the local convergence rate of EM is obtained. In particular, it is shown that for GMM with well-separated centers, the EM achieves faster convergence rates comparable to a quasi-Newton algorithm. While the convergence of EM deteriorates under worse separations, it is observed in \cite{redner1984mixture} that the mixture density determined by estimator sequence of EM reflects the sample data well.

In recent years, there has been a renewed wave of interest in studying the behavior of EM especially in GMMs. The global convergence of EM for a mixture of two equal-proportion Gaussian distributions is fully characterized in~\cite{xu2016global}. For more than two clusters, a negative result on EM and gradient EM being trapped in local minima arbitrarily far away from the global optimum is shown in~\cite{jin2016local}.

For high dimensional GMMs with $M$ components, the parameters are learned via reducing the dimensionality via a random projection in~\cite{dasgupta1999learning}. In~\cite{dasgupta2000two} the two-round method is proposed, where one first initializes with more than $M$ points, then prune to get one point in every cluster. It is pointed out in this paper that in high dimensional space, when the clusters are well separated, the mixing weight will go to either 0 or 1 after one single update. It is showed in \cite{yan2016robustness, mixon2016clustering} that one can cluster high dimensional sub-gaussian mixtures by semi-definite programming relaxations.

For the convergence rate of EM algorithm, it is observed in \cite{naim2012convergence} that a very small mixing proportion for one mixture component compared to others leads to slow convergence. \cite{balakrishnan2014statistical} gives non-asymptotic convergence guarantees in isotropic, balanced, two-component GMM; their result proves the linear convergence of EM if the center is initialized in a small neighborhood of the true parameters. The local convergence result in this paper has a sub-optimal contraction region. 

$K$-means clustering is another widely used clustering method. Lloyd's algorithm for $k$-means clustering has a similar flavor as EM. At each step, it recomputes the centroids of each cluster and updates the membership assignments alternatively. While EM does soft clustering at each step, Lloyd's algorithm obtains hard clustering. The clustering error of Lloyd's algorithm for arbitrary number of clusters is studied in~\cite{lu2016statistical}. The authors also show local convergence results where the contraction region is less restrictive than~\cite{balakrishnan2014statistical}. 

We would like to point out that there are many notable algorithms~\cite{kumar2010clustering,awasthi2012improved,vempala2004spectral} with provable guarantees for estimating mixture models. In~\cite{matouvsek2000approximate, friggstad2016local} the authors propose polynomial time algorithms which achieve epsilon approximation to the k-means loss.  A spectral algorithm for learning mixtures of gaussians is proposed in~\cite{vempala2004spectral}. We want to point out that our aim is not to come up with a new algorithm for mixture models, but to understand the interplay of model parameters in the convergence of gradient EM for a mixture of Gaussians with $M$ components. As we discuss later, our work also immediately leads to convergence guarantees of Stochastic Gradient EM. 
Another important difference is that the aim of these works is recovering the hidden mixture component memberships, whereas our goal is completely different: we are interested in understanding how well EM can estimate the mean parameters under a good initialization. 

In this paper, we study the convergence rate and local contraction radius of gradient EM under GMM with arbitrary number of clusters and mixing weights which are assumed to be known. For simplicity, we assume that the components share the same covariance matrix, which is known. Thus it suffices to carry out our analysis for isotropic GMMs with identity as the shared covariance matrix. We obtain a near-optimal condition on the contraction region in contrast to ~\cite{balakrishnan2014statistical}'s contraction radius for the mixture of two equal weight Gaussians.  We want to point out that, while the authors of~\cite{balakrishnan2014statistical} provide a general set of conditions to establish local convergence for a broad class of mixture models, the derivation of specific results and conditions on local convergence are tailored to the balance and symmetry of the model. 

We follow the same general route: first we obtain conditions for population gradient EM, where all sample averages are replaced by their expected counterpart. Then we translate the population version to the sample one. While the first part is conceptually similar, the general setting calls for more involved analysis. The second step typically makes use of concepts from empirical processes, by pairing up Ledoux-Talagrand contraction type arguments with well established symmetrization results. However, in our case, the function is not a contraction like in the symmetric two component case, since it involves the cluster estimates of all $M$ components.  Furthermore, the standard analysis of concentration inequalities by McDiarmid's inequality gets complicated because the bounded difference condition is not satisfied in our setting. We overcome these difficulties by taking advantage of recent tools in Rademacher averaging for vector valued function classes, and variants of McDiarmid type inequalities for functions which have bounded difference with high probability.

The rest of the paper is organized as follows. In Section \ref{sec:setup}, we state the problem and the notations. In Section 3, we provide the main results in local convergence rate and region for both population and sample-based gradient EM in GMMs. Section \ref{sec:local} and \ref{sec:sample_proof_sketch} provide the proof sketches of population and sample-based theoretical results, followed by the numerical result in Section \ref{sec:exp}. We conclude the paper with some discussions.

\section{Problem Setup and Notations}
\label{sec:setup}
Consider a GMM with $M$ clusters in $d$ dimensional space, with weights $\bm{\pi}=(\pi_1,\cdots,\pi_M)$.  Let $\bmu_i \in \bR^{d}$ be the mean of cluster $i$. Without loss of generality, we assume $\bE X=\sum_i \pi_i \bmu_i=0$ and the known covariance matrix for all components is $I_d$. Let $\bmu\in \bR^{Md}$ be the vector stacking the $\bmu_i$s vertically. We represent the mixture as $X\sim \text{GMM}(\pi,\bmu,I_d)$, which has the density function $p(x|\bmu)=\sum_{i=1}^M \pi_i\phi(x|\bmu_i, I_d)$.
where $\phi(x;\bmu,\Sigma)$ is the PDF of $N(\bmu,\Sigma)$. Then the population log-likelihood function as 
$
\cL(\bmu) = \bE_{X} \log \left( \sum_{i=1}^M \pi_i \phi(X|\bmu_i,I_d) \right)
$.
The Maximum Likelihood Estimator is then defined as $\hat{\bmu}_{\text{ML}} = \arg\max p(X|\bmu)$.
EM algorithm is based on using an auxiliary function to lower bound the log likelihood. Define
$
Q(\bmu|\bmu^t)=\bE_X \left[\sum_i p(Z=i|X;\bmu^t)\log \phi(X;\bmu_i,I_d)\right]
$, where $Z$ denote the unobserved component membership of data point $X$. 
The standard EM update is $\bmu^{t+1} = \arg\max_{\bmu} Q(\bmu|\bmu^t)$. Define 
\ba{
w_i(X;\bmu) = \frac{\pi_i\phi(X|\bmu_i,I_d) }{\sum_{j=1}^M \pi_j \phi(X|\bmu_j,I_d)}
\label{eq:def_w}
}
The update step for gradient EM, defined via the gradient operator $G(\bmu^t):\bR^{Md}\rightarrow \bR^{Md}$, is
\ba{
G(\bmu^t)^{(i)}:=\bmu_{i}^{t+1} = \bmu_i^t+s[\nabla Q(\bmu^t|\bmu^t)]_i = \bmu_i^t+s\bE_X \left[ \pi_i w_i(X;\bmu^t)(X-\bmu_i^t) \right].
\label{eq:grad_Q} }
where $s>0$ is the step size and $(.)^{(i)}$ denotes the part of the stacked vector corresponding to the $i^{th}$ mixture component. We will also use $G_n(\bmu)$ to denote the empirical counterpart of the population gradient operator $G(\mu)$ defined in Eq~\eqref{eq:grad_Q}. We assume we are given an initialization $\bmu^0_i$ and the true mixing weight $\pi_i$ for each component.
\subsection{Notations}
Define $\rmax$ and $\rmin$ as the largest and smallest distance between cluster centers i.e., $\rmax = \max_{i\ne j}\|\bmu_i^*-\bmu_j^*\|, \rmin = \min_{i\ne j}\|\bmu_i^*-\bmu_j^*\|$. Let $\pi_{\max}$ and $\pi_{\min}$ be the maximal and minimal cluster weights, and define $\kappa$ as $\kappa = \frac{\pi_{\max}}{\pi_{\min}}$.
Standard complexity analysis notation $o(\cdot), O(\cdot), \Theta(\cdot), \Omega(\cdot)$ will be used. $f(n)=\tilde{\Omega}(g(n))$ is short for $\Omega(g(n))$ ignoring logarithmic factors, equivalent to $f(n)\ge Cg(n)\log^k(g(n))$, similar for others. We use $\otimes$ to represent the kronecker product.

\section{Main Results}
\label{sec:mainres}
Despite being a non-convex problem, EM and gradient EM algorithms have been shown to exhibit good convergence behavior in practice, especially with good initializations. However, existing local convergence theory only applies for two-cluster equal-weight GMM. In this section, we present our main result in two parts. First we show the convergence rate and present a near-optimal radius for contraction region for population gradient EM. Then in the second part we connect the population version to finite sample results using concepts from empirical processes and learning theory.

\subsection{Local contraction for population gradient EM}
Intuitively, when $\bmu^t$ equals the ground truth $\bmu^*$, then the $Q(\bmu|\bmu^*)$ function will be well-behaved. This function is a key ingredient in \cite{balakrishnan2014statistical}, where the curvature of the $Q(\cdot|\bmu)$ function is shown to be close to the curvature of $Q(\cdot|\bmu^*)$ when the $\bmu$ is close to $\bmu^*$. This is a local property that only requires the gradient to be stable at one point.

\begin{definition}[Gradient Stability]
The Gradient Stability (GS) condition, denoted by $\text{GS}(\gamma, a)$, is satisfied if there exists $\gamma >0$, such that for $\bmu_i^t\in \bB(\bmu_i^*,a)$ with some $a>0$, for $\forall i\in[M]$.
\bas{
\|\nabla Q(\bmu^t|\bmu^*) - \nabla Q(\bmu^t|\bmu^t)\| \le \gamma \|\bmu^t-\bmu^*\|
}
\label{def:gs}
\end{definition}

The GS condition is used to prove contraction of the sequence of estimators produced by population gradient EM.  
However, for most latent variable models, it is typically challenging to verify the GS condition and obtain a tight bound on the parameter $\gamma$. 
We derive the GS condition under milder conditions (see Theorem~\ref{th:gmm_gs} in Section~\ref{sec:local}), which bounds the deviation of the partial gradient evaluated at $\bmu^t_i$ uniformly over all $i\in[M]$. This immediately implies the global GS condition defined in Definition \ref{def:gs}. Equipped with this result, we achieve a nearly optimal local convergence radius for general GMMs in Theorem \ref{th:population_contraction}. The proof of this theorem can be found in Appendix \ref{sec:proof_main_popu}.

\begin{theorem}[Convergence for Population gradient EM]
Let $d_0:=\min\{d,M\}$. If $\rmin=\tilde{\Omega}(\sqrt{d_0})$, with initialization $\bmu^0$ satisfying, $\|\bmu_i^0-\bmu_i^*\|\le a, \forall i\in [M]$, where
\bas{
	a \le& \half{\rmin}-\sqrt{d_0}O\left(\sqrt{\log\left(\max\left\{\frac{M^2\kappa}{\pi_{\min}},\rmax,d_0\right\}\right)}\right)
}

then the Population EM converges:
\bas{
\|\bmu^t-\bmu^*\| \le \zeta^t \|\bmu_0-\bmu^*\|,\qquad \zeta=\frac{\pi_{\max}-\pi_{\min}+2\gamma}{\pi_{\max}+\pi_{\min}}<1
} 
where $\gamma=M^{2}(2\kappa+4)\left( 2\rmax+ d_0 \right)^2\exp\left(-\left(\half{\rmin}-a\right)^2\sqrt{d_0}/8\right) < \pi_{\min}$.
\label{th:population_contraction}
\end{theorem}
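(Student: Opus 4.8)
\emph{Proof plan.} The plan is to exploit the fact that $Q(\cdot|\bmu^*)$ is an \emph{exactly} quadratic function with a block-diagonal Hessian, combine this with the Gradient Stability bound of Theorem~\ref{th:gmm_gs}, and iterate a single-step contraction. I would first record the self-consistency of population EM. Since at $\bmu^*$ the weight $w_i(X;\bmu^*)$ equals the true posterior probability $P(Z=i|X)$, for any integrable $f$ one has $\bE_X[w_i(X;\bmu^*)f(X)]=\pi_i\,\bE_{X\sim N(\bmu_i^*,I_d)}[f(X)]$. Taking $f(X)=X$ and $f\equiv 1$ gives $\bE_X[w_i(X;\bmu^*)X]=\pi_i\bmu_i^*$ and $\bE_X[w_i(X;\bmu^*)]=\pi_i$, whence
\[
Q(\bmu|\bmu^*)=-\tfrac12\sum_{i=1}^M\pi_i\norm{\bmu_i-\bmu_i^*}^2+\text{const},
\qquad\text{so}\qquad
[\nabla Q(\bmu|\bmu^*)]_i=-\pi_i(\bmu_i-\bmu_i^*)\ \text{exactly}.
\]
In particular $\nabla Q(\bmu^*|\bmu^*)=0$, so $\bmu^*$ is a fixed point of $G$.

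\medskip

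Next I would set $D=\diag(\pi_1 I_d,\dots,\pi_M I_d)$ and use the step size $s=\tfrac{2}{\pi_{\max}+\pi_{\min}}$ (the choice that produces the stated $\zeta$), and write the gradient-EM update as
\[
\bmu^{t+1}-\bmu^*=(I-sD)(\bmu^t-\bmu^*)+s\big(\nabla Q(\bmu^t|\bmu^t)-\nabla Q(\bmu^t|\bmu^*)\big).
\]
The first term has operator norm $\max_i|1-s\pi_i|=\tfrac{\pi_{\max}-\pi_{\min}}{\pi_{\max}+\pi_{\min}}$, and the second is at most $s\gamma\norm{\bmu^t-\bmu^*}$ by the $\text{GS}(\gamma,a)$ condition of Definition~\ref{def:gs}, which Theorem~\ref{th:gmm_gs} supplies with the displayed $\gamma$ as long as $\bmu_i^t\in\bB(\bmu_i^*,a)$ for every $i$. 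The triangle inequality then gives $\norm{\bmu^{t+1}-\bmu^*}\le\zeta\norm{\bmu^t-\bmu^*}$ with $\zeta=\tfrac{\pi_{\max}-\pi_{\min}+2\gamma}{\pi_{\max}+\pi_{\min}}$; and since Theorem~\ref{th:gmm_gs} controls the partial gradient deviation uniformly over $i$, running the same estimate blockwise also yields $\max_i\norm{\bmu_i^{t+1}-\bmu_i^*}\le\zeta\max_i\norm{\bmu_i^t-\bmu_i^*}$.

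\medskip

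It then remains to check that the hypotheses force $\zeta<1$ and that the GS ball is never left. Note $\zeta<1\iff\gamma<\pi_{\min}$. Substituting the explicit $\gamma=M^2(2\kappa+4)(2\rmax+d_0)^2\exp(-(\rmin/2-a)^2\sqrt{d_0}/8)$ and taking logarithms, $\gamma<\pi_{\min}$ amounts to requiring $(\rmin/2-a)^2\sqrt{d_0}/8$ to exceed $\log\big(M^2(2\kappa+4)(2\rmax+d_0)^2/\pi_{\min}\big)$; bounding the logarithm crudely, this is comfortably implied by the displayed upper bound on $a$ (together with the regime of validity of Theorem~\ref{th:gmm_gs}), and positivity of that bound --- so that an admissible $a>0$ exists --- is precisely the separation assumption $\rmin=\tilde{\Omega}(\sqrt{d_0})$. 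Given $\zeta<1$ and $\max_i\norm{\bmu_i^0-\bmu_i^*}\le a$, the blockwise contraction preserves $\max_i\norm{\bmu_i^t-\bmu_i^*}\le a$ for all $t$, so GS is applicable at every iteration, and induction on $t$ delivers $\norm{\bmu^t-\bmu^*}\le\zeta^t\norm{\bmu^0-\bmu^*}$.

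\medskip

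I expect the bulk of the work to lie in Theorem~\ref{th:gmm_gs}, whose proof is where the Gaussian tail estimates and the separation condition $\rmin=\tilde{\Omega}(\sqrt{d_0})$ genuinely enter; within the present argument the only real subtleties are the bookkeeping that keeps the componentwise ball invariant (so that GS remains valid across iterations) and the slightly fussy translation of $\gamma<\pi_{\min}$ into the stated constraint on the initialization radius $a$.
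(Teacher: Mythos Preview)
Your proposal is correct and follows essentially the same route as the paper: the paper proves Theorem~\ref{th:population_contraction} by combining Lemma~\ref{lem:grad_q} (your exact quadratic form for $Q(\cdot|\bmu^*)$) with Theorem~\ref{th:contraction_with_gamma} (your single-step contraction with step size $s=2/(\pi_{\max}+\pi_{\min})$) and Theorem~\ref{th:gmm_gs}, and then checks $\gamma<\pi_{\min}$ to obtain the displayed bound on $a$. Your write-up is in fact slightly more careful than the paper's in that you explicitly verify the blockwise invariance $\max_i\|\bmu_i^{t+1}-\bmu_i^*\|\le\zeta\max_i\|\bmu_i^t-\bmu_i^*\|$ (which does follow from the per-block GS bound in Theorem~\ref{th:gmm_gs}), so that the hypothesis $\bmu_i^t\in\bB(\bmu_i^*,a)$ persists for all $t$; the paper leaves this implicit.
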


\begin{remark}
The local contraction radius is largely improved compared to that in \cite{balakrishnan2014statistical}, which has $\rmin/8$ in the two equal sized symmetric GMM setting.
It can be seen that in Theorem \ref{th:population_contraction}, $a/\rmin$ goes to $\half{1}$ as the signal to noise ratio goes to infinity. 
We will show in simulations that when initialized from some point that lies $\rmin/2$ away from the true center, gradient EM only converges to a stationary point which is not a global optimum. More discussion can be found in Section~\ref{sec:exp}.
\end{remark}

\subsection{Finite sample bound for gradient EM} 
In the finite sample setting, as long as the deviation of the sample gradient from the population gradient is uniformly bounded, the convergence in the population setting implies the convergence in finite sample scenario. Thus the key ingredient in the proof is to get this uniform bound over all parameters in the contraction region $\bA$, i.e. bound $\sup_{\bmu\in \bA}\|G^{(i)}(\bmu)-G_n^{(i)}(\bmu)\|$, where $G$ and $G_n$ are defined  in Section~\ref{sec:setup}.

To prove the result, we expand the difference and define the following function for $i\in [M]$, where $u$ is a unit vector on a $d$ dimensional sphere $\cS^{d-1}$. This appears because we can write the Euclidean norm of any vector $B$,  as $\|B\|=\sup_{u\in \cS^{d-1}}\langle B, u\rangle$.
\ba{
	g_i^u(X) = \sup_{\bmu\in \bA} \frac{1}{n}\sum_{i=1}^n w_1(X_i;\bmu)\ip{X_i-\bmu_1,u}-\bE w_1(X;\bmu)\ip{X-\bmu_1,u}.
	\label{eq:def_g}
}

We will drop the super and subscript and prove results for $g_1^u$ without loss of generality.

The outline of the proof is to show that $g(X)$ is close to its expectation. This expectation can be further bounded via the Rademacher complexity of the corresponding function class (defined below in Eq~\eqref{def:function_class}) by the tools like the symmetrization lemma \cite{mohri2012foundations}. 

Consider the following class of functions indexed by $\bmu$ and some unit vector on $d$ dimensional sphere $u\in \cS^{d-1}$:
\ba{
	\cF_i^u = \{f^i: \cX\to \bR | f^i(X;\bmu,u) = w_i(X;\bmu)\ip{X-\bmu_i,u}\}
	\label{def:function_class}
}
We need to bound the $M$ functions classes separately for each mixture.
Given a finite $n$-sample $(X_1,\cdots, X_n)$, for each class, we define the Rademacher complexity as the expectation of empirical Rademacher complexity.
\bas{
	\hat{R}_n(\cF_i^u) = \bE_\epsilon \left[ \sup_{\bmu\in \bA}\frac{1}{n}\sum_{j=1}^n \epsilon_i f^i(X_j;\bmu,u) \right]; \qquad R_n(\cF_i^u)=\bE_{X} \hat{R}_n(\cF_i^u)
}
where $\epsilon_i$'s are the i.i.d. Rademacher random variables. 

For many function classes, the computation of the empirical Rademacher complexity can be hard. For complicated functions which are Lipschitz w.r.t functions from a simpler function class, one can use Ledoux-Talagrand type contraction results~\cite{ledoux2013probability}. In order to use the Ledoux-Talagrand contraction, one needs a 1-Lipschitz function, which we do not have, because our function involves $\bmu_i$, $i\in[M]$. Also, the weight functions $w_i$ are not separable in terms of the $\bmu_i$'s. 
Therefore the classical contraction lemma does not apply.
In our analysis, we need to introduce a vector-valued function, with each element involving only one $\bmu_i$, and apply a recent result of vector-versioned contraction lemma \cite{maurer2016vector}. With some careful analysis, we get the following. The details are deferred to Section~\ref{sec:sample_proof_sketch}.

\begin{proposition}
Let $\cF_i^u$ as defined in Eq.~\eqref{def:function_class} for $\forall i\in [M]$, then for some universal constant $c$,
\bas{
R_n(\cF_i^u) \le \frac{c M^{3/2}(1+\rmax)^3\sqrt{d}\max\{1,\log(\kappa)\}}{\sqrt{n}}
}
\label{prop:rademacher_bound}
\end{proposition}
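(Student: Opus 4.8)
The plan is to bound $R_n(\cF_i^u)$ by passing through a vector-valued contraction argument, since the map $\bmu \mapsto w_i(X;\bmu)\langle X-\bmu_i,u\rangle$ is not 1-Lipschitz in a single scalar coordinate but is Lipschitz as a function of the finite-dimensional vector $(\langle X-\bmu_1,u\rangle,\ldots,\langle X-\bmu_M,u\rangle)$ (or a suitable reparametrization). First I would decompose $f^i(X;\bmu,u) = w_i(X;\bmu)\langle X-\bmu_i,u\rangle$ and observe that, on the contraction region $\bA$ where each $\bmu_j$ lies within $a$ of $\bmu_j^*$ and the centers are $\tilde\Omega(\sqrt{d_0})$-separated, the softmax-type weights $w_i$ are smooth: their gradient with respect to the collection of inner products $\{\langle X,\bmu_j\rangle\}_j$ is uniformly bounded (each $w_i\in[0,1]$, and $\nabla w_i$ involves products $w_i(w_j-\mathbbm 1)$ times $X-\bmu_j$, controlled on $\bA$). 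The key quantitative input is a uniform Lipschitz constant $L$, depending polynomially on $\rmax$ and $M$ and on $\max\{1,\log\kappa\}$ (the $\log\kappa$ coming from how small $\pi_{\min}$ can make a weight without breaking the bound), such that $\bmu\mapsto f^i(X;\bmu,u)$ is $L$-Lipschitz in the vector of coordinates $v_j:=\langle X-\bmu_j,u\rangle$, $j\in[M]$.

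Next I would apply the vector-valued contraction lemma of Maurer~\cite{maurer2016vector}: writing $f^i = h_i\circ \Phi$ where $\Phi(\bmu) = (v_1,\ldots,v_M)\in\bR^M$ (linear in $\bmu$ given $X$) and $h_i$ is $L$-Lipschitz, one gets
\bas{
\hat R_n(\cF_i^u) \le \sqrt{2}\, L \;\bE_\epsilon\left[\sup_{\bmu\in\bA}\frac{1}{n}\sum_{j=1}^n \sum_{k=1}^M \epsilon_{jk}\,\langle X_j-\bmu_k,u\rangle\right],
}
reducing to $M$ copies of a \emph{linear} function class $\{\bmu_k \mapsto \langle X_j - \bmu_k, u\rangle\}$. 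Each such linear class has empirical Rademacher complexity $O(\mathrm{diam}(\bA)\cdot\|\cdot\|/\sqrt n)$; with $\mathrm{diam}(\bA) = O(\rmax)$ in each block and the $u$-projection keeping things one-dimensional per block, summing the $M$ blocks and taking $\bE_X$ (using $\bE\|\frac1n\sum_j \epsilon_j(X_j-\bmu_k)\|$-type bounds, which contribute the $\sqrt d$ since $\|u\|=1$ but $X_j\in\bR^d$ — more precisely the $\sqrt d$ enters when we later take the supremum over $u\in\cS^{d-1}$ via a covering argument, or directly from $\bE\langle X,u\rangle^2$-scaling across the net) yields the stated $M^{3/2}(1+\rmax)^3\sqrt d\max\{1,\log\kappa\}/\sqrt n$. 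The powers match up as: one factor $M$ from summing blocks, $\sqrt M$ from the contraction lemma's dimension dependence; $(1+\rmax)$ from the Lipschitz constant, $(1+\rmax)$ from $\mathrm{diam}(\bA)$, and $(1+\rmax)$ from bounding $\langle X-\bmu_k,u\rangle$ in sub-Gaussian tail terms; $\sqrt d$ from the sphere/covering; $\max\{1,\log\kappa\}$ from the weight bound.

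The main obstacle I expect is establishing the uniform Lipschitz constant $L$ for $f^i$ as a function of the vector $(v_1,\ldots,v_M)$ \emph{with the right dependence on $\kappa$ and $\rmax$}, and in particular ruling out blow-up when some $\pi_k$ is tiny. This requires carefully using the separation condition $\rmin = \tilde\Omega(\sqrt{d_0})$ together with the constraint that $\bmu\in\bA$ stays within $a \le \rmin/2 - \Theta(\sqrt{d_0\log(\cdot)})$ of the truth, so that for a point $X$ drawn from component $\ell$, the weight $w_\ell(X;\bmu)$ is bounded below and all $w_k$, $k\ne\ell$, decay exponentially — but this only holds in expectation over $X$, not pointwise, so the Lipschitz bound must be formulated to survive taking $\bE_X\hat R_n$. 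I would handle this by splitting the expectation over a high-probability "good" event (where $X$ is close to its generating center and the weight structure is controlled, giving the claimed bound) and its complement (where crude bounds $|w_i|\le 1$, $|\langle X-\bmu_i,u\rangle|\le\|X\|+\rmax+a$ suffice, and the small probability kills the polynomial-in-$\rmax$ terms); the $\max\{1,\log\kappa\}$ is precisely the price of making the "good" event overwhelmingly likely uniformly in $\bmu\in\bA$.
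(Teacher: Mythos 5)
You have the right tool in mind (Maurer's vector-valued contraction lemma), but there is a genuine gap in how you set it up. Your proposed intermediate map $\Phi(\bmu)=(\langle X-\bmu_1,u\rangle,\dots,\langle X-\bmu_M,u\rangle)$ cannot carry the argument: $w_i(X;\bmu)$ depends on the full quadratic quantities $\|X-\bmu_j\|^2$ (equivalently on $\langle X,\bmu_j\rangle$ \emph{and} $\|\bmu_j\|^2$ and $\log\pi_j$), not on the projections of $X-\bmu_j$ onto the single direction $u$, so $f^i$ does not factor through your $\Phi$ and no Lipschitz constant exists for that factorization. The reparametrization that works is the log-likelihood-ratio vector $[\eta_j(\bmu)]_k=\frac{\|\bmu_1\|^2}{2}-\frac{\|\bmu_k\|^2}{2}+\langle X_j,\bmu_k-\bmu_1\rangle+\log(\pi_k/\pi_1)$, under which $w_1=\bigl(1+\sum_{k\ge 2}e^{[\eta_j]_k}\bigr)^{-1}$ is a softmax whose partial derivatives are bounded by $1/4$ \emph{universally}, giving Lipschitz constant $\sqrt{M}/4$ with no dependence on $\kappa$, $\rmax$, or the separation.

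This matters because your plan for the "main obstacle" — establishing a Lipschitz constant depending on $\max\{1,\log\kappa\}$ via the separation condition and a good-event/bad-event split over $X$ — is a misdiagnosis and would not go through: the contraction lemma requires a deterministic, pointwise Lipschitz condition holding for all $\bmu,\bmu'\in\bA$, so a condition that holds only with high probability over $X$ cannot be plugged in, and conditioning inside the Rademacher expectation before contracting is not licensed by the lemma. In the correct argument the separation condition plays no role at all; $\log\kappa$ and $\rmax$ enter only \emph{after} contraction, from the size of the coordinates of $\eta_j$ (the $\log(\pi_k/\pi_1)\le\log\kappa$ and $\|\bmu_k\|^2\le(3\rmax/2)^2$ terms multiplying the sub-Gaussian variable $\langle X_j,u\rangle$), and the $\sqrt{d}$ enters from bounding $\bE\bigl\|\frac{1}{n}\sum_j\epsilon_{jk}X_jX_j^T\bigr\|_{op}\lesssim(1+\rmax)^2\sqrt{d/n}$ for the bilinear part $\langle X_j,\bmu_k-\bmu_1\rangle\langle X_j,u\rangle$ — not from a later covering of $u\in\cS^{d-1}$, which only costs an $e^{2d}$ union bound in the probability. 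Your final power-counting lands near the right answer, but the mechanism producing each factor is different from what an actual proof requires, and the missing reparametrization is the step that makes the whole argument work.
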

After getting the Rademacher complexity, one needs to use concentration results like McDiarmid's inequality~\cite{mcdiarmid1989method} to achieve the finite-sample bound. 
Unfortunately for the functions defined in Eq.~\eqref{def:function_class}, the martingale difference sequence does not have bounded differences. Hence it is difficult to apply McDiarmid's inequality in its classical form. To resolve this, we instead use an extension of McDiarmid's inequality which can accommodate sequences which have bounded differences with high probability~\cite{combes2015extension}.

\begin{theorem}[Convergence for sample-based gradient EM]
Let $\zeta$
be the contraction parameter in Theorem~\ref{th:population_contraction}, and 
\ba{
\epsilon^{\text{unif}}(n)=  \tilde{O}(\max\{n^{-1/2}M^3(1+\rmax)^3\sqrt{d}\max\{1,\log(\kappa)\}, (1+\rmax)d/\sqrt{n}\}).
\label{eq:def_epsilon}
}
If $\epsilon^{\text{unif}}(n)\leq (1-\zeta)a$, then sample-based gradient EM satisfies 
\bas{
\norm{\hat{\bmu}_i^t-\bmu_i^*} \leq \zeta^t \norm{\bmu^0-\bmu^*}_2+\frac{1}{1-\zeta}\epsilon^{\text{unif}}(n); \quad \forall i\in [M]
}
with probability at least $1-n^{-cd}$, where $c$ is a positive constant.
\label{th:sample}
\end{theorem}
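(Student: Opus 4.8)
The plan is to bootstrap the finite-sample result from the population contraction (Theorem~\ref{th:population_contraction}) by controlling the uniform deviation $\sup_{\bmu\in\bA}\|G^{(i)}(\bmu)-G_n^{(i)}(\bmu)\|$ over the contraction region $\bA=\{\bmu:\|\bmu_i-\bmu_i^*\|\le a\ \forall i\}$, and then running a perturbed-iteration argument. First I would show that, on a high-probability event, $\sup_{\bmu\in\bA}\|G^{(i)}(\bmu)-G_n^{(i)}(\bmu)\|\le \epsilon^{\text{unif}}(n)$ for every $i\in[M]$ simultaneously. Writing $\|B\|=\sup_{u\in\cS^{d-1}}\langle B,u\rangle$, and using a standard $1/2$-net $\cN$ of $\cS^{d-1}$ (of size $\le 5^d$) to discretize the sphere, it suffices to bound $g_1^u(X)$ of Eq.~\eqref{eq:def_g} for each fixed $u$ and then union-bound over the net, absorbing the $5^d$ factor into the stated $n^{-cd}$ probability and the logarithmic slack in $\tilde O$.

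For a fixed $u$, the bound on $\bE g_1^u(X)$ comes from the symmetrization lemma, which reduces it to $2R_n(\cF_1^u)$; Proposition~\ref{prop:rademacher_bound} then gives $\bE g_1^u(X) = \tilde O(n^{-1/2}M^{3/2}(1+\rmax)^3\sqrt{d}\max\{1,\log\kappa\})$, which is dominated by the first term inside the $\max$ in Eq.~\eqref{eq:def_epsilon}. To pass from the expectation to a high-probability bound I would invoke the extension of McDiarmid's inequality of~\cite{combes2015extension}: since $w_1\le 1$ and $\langle X-\bmu_1,u\rangle$ is sub-Gaussian, changing one coordinate $X_j$ alters $g_1^u$ by at most $\tfrac{c}{n}(\|X_j\|+\rmax+a)$, which is $O((1+\rmax)d/\sqrt{n})$ except on an event of probability $\le n^{-cd}$ (by a $\chi^2$ tail bound on $\|X_j\|$); this is exactly where the second term in the $\max$ defining $\epsilon^{\text{unif}}(n)$ enters. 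Feeding the "bounded differences with high probability" constants into~\cite{combes2015extension} yields $g_1^u(X)\le \bE g_1^u(X)+\tilde O((1+\rmax)d/\sqrt n)$ with probability $1-n^{-cd}$, hence $\sup_{\bmu\in\bA}\|G^{(1)}(\bmu)-G_n^{(1)}(\bmu)\|\le\epsilon^{\text{unif}}(n)$, and a union bound over $i\in[M]$ (another logarithmic cost) gives it for all components.

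The final step is the deterministic recursion. On the above event, as long as every iterate stays in $\bA$ we have, for each $i$,
\bas{
\|\hat\bmu_i^{t+1}-\bmu_i^*\| \le \|G^{(i)}(\hat\bmu^t)-\bmu_i^*\| + \|G^{(i)}(\hat\bmu^t)-G_n^{(i)}(\hat\bmu^t)\| \le \zeta\|\hat\bmu^t-\bmu^*\| + \epsilon^{\text{unif}}(n),
}
where the contraction of $G$ uses Theorem~\ref{th:population_contraction}. Taking the max over $i$ (the population bound is stated per-coordinate with the same $\zeta$) and unrolling the recursion gives $\|\hat\bmu_i^t-\bmu_i^*\|\le \zeta^t\|\bmu^0-\bmu^*\| + \tfrac{1}{1-\zeta}\epsilon^{\text{unif}}(n)$. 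One must check the invariant that $\hat\bmu^t\in\bA$: if $\|\bmu^0-\bmu^*\|\le a$ then inductively $\|\hat\bmu_i^{t+1}-\bmu_i^*\|\le \zeta a + \epsilon^{\text{unif}}(n)\le \zeta a + (1-\zeta)a = a$, using precisely the hypothesis $\epsilon^{\text{unif}}(n)\le(1-\zeta)a$; so the iterates never leave $\bA$ and the bound is valid for all $t$.

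The main obstacle is the second step: establishing the uniform deviation bound when the martingale differences are not bounded. The subtlety is twofold — the supremum over the non-compact parameter region $\bA$ (handled by the Rademacher/symmetrization machinery together with the vector-contraction of Proposition~\ref{prop:rademacher_bound}), and the unbounded increments (handled by~\cite{combes2015extension}), where one has to verify that the "failure" probability of the bounded-difference condition is small enough ($\le n^{-cd}$) that it does not degrade the net union bound, and that the resulting concentration term is of the claimed order $\tilde O((1+\rmax)d/\sqrt n)$. Everything else — the net argument on the sphere, the symmetrization, and the perturbed linear recursion — is routine once these two estimates are in place.
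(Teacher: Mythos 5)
Your proposal follows essentially the same route as the paper: a $1/2$-net over $\cS^{d-1}$ to reduce the vector deviation to the scalar functionals $g_i^u$, symmetrization plus Proposition~\ref{prop:rademacher_bound} to bound $\bE g_i^u$, the extension of McDiarmid's inequality from~\cite{combes2015extension} to handle increments that are bounded only with high probability, and finally the perturbed recursion with the invariant $\hat\bmu^t\in\bA$ maintained by the hypothesis $\epsilon^{\text{unif}}(n)\le(1-\zeta)a$. The deterministic recursion at the end matches the paper's proof of Theorem~\ref{th:sample} exactly.

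One step is glossed over, and it is the one place where the paper must do real work beyond ``feeding constants'' into~\cite{combes2015extension}: Theorem~\ref{th:extension_mcdiarmid} concentrates $g_i^u(X)$ around the \emph{conditional} expectation $\bE[g_i^u(X)\mid \bX\in\cY_r]$, where $\cY_r$ is the event that every sample lies within $r$ of its cluster center, not around $\bE g_i^u(X)$. To conclude, you must additionally bound $\left|\bE[g_i^u\mid \cY_r]-\bE g_i^u\right|$, which amounts to controlling $\bE[g_i^u\,\mathbf{1}(\bX\notin\cY_r)]$ for a function $g_i^u$ that is itself unbounded; the paper does this by dominating $|g_i^u|$ pointwise by an average of $\|X_j-\bmu^*_{Z_j}\|$ plus $O(\rmax)$ and integrating Gaussian tails via Lemma~\ref{lem:pol_exp_tail_bound}. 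There is also a small quantitative slip: on $\cY_r$ with $r=\Theta((1+\rmax)\sqrt{d}\log^2 n)$ the per-coordinate difference is $L=2(r+\rmax)/n=\tilde O((1+\rmax)\sqrt{d}/n)$, not $O((1+\rmax)d/\sqrt{n})$; the term $(1+\rmax)d/\sqrt{n}$ in Eq.~\eqref{eq:def_epsilon} emerges only after the McDiarmid-type deviation $\epsilon\asymp\sqrt{nd\log n}\,L$ is computed. Neither issue changes the architecture of your argument, but the conditional-expectation correction is a genuinely missing step rather than a routine verification.
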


\begin{remark}
When data is observed in a streaming fashion, the gradient update can be modified into a stochastic gradient update, where the gradient is evaluated based on a single observation or a small batch.
By the GS condition proved in Theorem~\ref{th:population_contraction}, combined with Theorem 6 in \cite{balakrishnan2014statistical}, we immediately extend the guarantees of gradient EM into the guarantees for the stochastic gradient EM. 
\end{remark}

\subsection{Initialization}
Appropriate initialization for EM is the key to getting good estimation within fewer restarts in practice. There have been a number of interesting initialization algorithms for estimating mixture models.  
It is pointed out in \cite{jin2016local} that in practice, initializing the centers by uniformly drawing from the data is often more reasonable than drawing from a fixed distribution. Under this initialization strategy, we can bound the number of initializations required to find a ``good'' initialization that falls in the contraction region in Theorem~\ref{th:population_contraction}. 
 The exact theorem statement and a discussion of random initialization can be found in Appendix~\ref{app:initialization}. More sophisticated strategy includes, an approximate solution to $k$-means on a projected low-dimensional space used in~\cite{awasthi2012improved} and~\cite{kumar2010clustering}.  While it would be interesting to study different initialization schemes, that is part of future work.

\section{Local Convergence of Population Gradient EM}
\label{sec:local}
In this section we present the proof sketch for Theorem \ref{th:population_contraction}. The complete proofs in this section are deferred to Appendix \ref{app:proof_popu}. To start with, we calculate the closed-form characterization of the gradient of $q(\bmu)$ as stated in the following lemma. 

\begin{lemma}
Define $q(\bmu) = Q(\bmu|\bmu^*)$. The gradient of $q(\bmu)$ is
$\nabla q(\bmu) = (\diag(\pi)\otimes I_d) \left( \bmu^*-\bmu \right)$.
\label{lem:grad_q}
\end{lemma}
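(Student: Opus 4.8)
The plan is to compute $\nabla_{\bmu} Q(\bmu|\bmu^*)$ directly from the definition of $Q$, exploiting the fact that the posterior weights are evaluated at the fixed parameter $\bmu^*$, so they do not depend on the variable $\bmu$ being differentiated. Recall
\bas{
q(\bmu) = Q(\bmu|\bmu^*) = \bE_X\left[\sum_{i=1}^M w_i(X;\bmu^*)\,\log\phi(X;\bmu_i,I_d)\right],
}
where I am writing $w_i(X;\bmu^*) = p(Z=i|X;\bmu^*)$ for the posterior weights. Since $\log\phi(X;\bmu_i,I_d) = -\tfrac{d}{2}\log(2\pi) - \tfrac12\|X-\bmu_i\|^2$, the only term depending on $\bmu_i$ is $-\tfrac12\|X-\bmu_i\|^2$, whose gradient with respect to $\bmu_i$ is $X - \bmu_i$. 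The $i$-th block of $\nabla q(\bmu)$ is therefore
\bas{
[\nabla q(\bmu)]_i = \bE_X\left[w_i(X;\bmu^*)(X-\bmu_i)\right].
}
The key step is then to evaluate $\bE_X[w_i(X;\bmu^*)(X-\bmu_i)]$. Splitting the expectation over the latent label, the marginal law of $X$ is $\sum_j \pi_j \phi(\cdot;\bmu_j^*,I_d)$, and a direct computation (or Bayes' rule) gives $\bE_X[w_i(X;\bmu^*)\,h(X)] = \pi_i\,\bE_{X\sim N(\bmu_i^*,I_d)}[h(X)]$ for any integrable $h$, because $w_i(x;\bmu^*)\,p(x|\bmu^*) = \pi_i\phi(x;\bmu_i^*,I_d)$. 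Applying this with $h(X) = X-\bmu_i$ yields $[\nabla q(\bmu)]_i = \pi_i\bE_{X\sim N(\bmu_i^*,I_d)}[X-\bmu_i] = \pi_i(\bmu_i^* - \bmu_i)$. Stacking the blocks, $\nabla q(\bmu) = (\diag(\pi)\otimes I_d)(\bmu^*-\bmu)$, which is the claim.

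There is essentially no serious obstacle here; the only point requiring a little care is justifying the interchange of expectation and differentiation (dominated convergence, using that the integrand and its gradient are dominated by polynomially-growing functions against a Gaussian measure), and carrying the identity $w_i(x;\bmu^*)p(x|\bmu^*)=\pi_i\phi(x;\bmu_i^*,I_d)$ through cleanly so that the mixture marginal collapses to a single Gaussian component. Everything else is the routine gradient of a quadratic. The resulting linear form of $\nabla q$ is exactly what makes the subsequent contraction argument in Theorem~\ref{th:population_contraction} transparent: the "clean" gradient $\nabla Q(\cdot|\bmu^*)$ is globally linear with a contraction determined by $\diag(\pi)$, and all the difficulty is quarantined into the $\text{GS}(\gamma,a)$ term measuring the gap to $\nabla Q(\cdot|\bmu^t)$.
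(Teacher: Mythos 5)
Your proposal is correct and follows essentially the same route as the paper: both reduce the $i$-th gradient block to $\bE_X[w_i(X;\bmu^*)(X-\bmu_i)]$ and then collapse the mixture expectation via the identity $w_i(x;\bmu^*)\,p(x|\bmu^*)=\pi_i\phi(x;\bmu_i^*,I_d)$ (the paper phrases this as a decomposition over the mixture components), yielding $\pi_i(\bmu_i^*-\bmu_i)$. Your version is slightly more explicit in deriving the gradient of $Q$ from its definition and in noting the differentiation-under-the-integral justification, but the substance is identical.
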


If we know the parameter $\gamma$ in the gradient stability condition, then the convergence rate depends only on the condition number of the Hessian of $q(\cdot)$ and $\gamma$.
\begin{theorem}[Convergence rate for population gradient EM]
	If $Q$ satisfies the GS condition with parameter $0<\gamma<\pi_{\min}$, denote $d_t:=\norm{\bmu_t-\bmu^*}$, then with step size $s=\frac{2}{\pi_{\min}+\pi_{\max}}$, we have:
	\bas{
		d_{t+1} \le \left(\frac{\pi_{\max}-\pi_{\min}+2\gamma}{\pi_{\max}+\pi_{\min}}\right)^t d_{0}
	}   
    \label{th:contraction_with_gamma}
\end{theorem}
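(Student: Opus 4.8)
The plan is to reduce the theorem to a single-step contraction $d_{t+1}\le \zeta\, d_t$ with $\zeta = \frac{\pi_{\max}-\pi_{\min}+2\gamma}{\pi_{\max}+\pi_{\min}}$, and then iterate. The natural starting point is the gradient EM update $\bmu^{t+1}=\bmu^t + s\nabla Q(\bmu^t|\bmu^t)$. I would add and subtract $\nabla Q(\bmu^t|\bmu^*)$, and invoke Lemma~\ref{lem:grad_q}, which gives the \emph{exact} linear identity $\nabla Q(\bmu^t|\bmu^*) = \nabla q(\bmu^t) = (\diag(\pi)\otimes I_d)(\bmu^*-\bmu^t)$. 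Subtracting $\bmu^*$ from both sides of the update then yields
\bas{
\bmu^{t+1}-\bmu^* = \big(I_{Md}-s(\diag(\pi)\otimes I_d)\big)(\bmu^t-\bmu^*) + s\big(\nabla Q(\bmu^t|\bmu^t)-\nabla Q(\bmu^t|\bmu^*)\big).
}

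Next I would take Euclidean norms and apply the triangle inequality to the two terms separately. The second term is controlled directly by the GS condition: $\|\nabla Q(\bmu^t|\bmu^t)-\nabla Q(\bmu^t|\bmu^*)\|\le \gamma\|\bmu^t-\bmu^*\| = \gamma d_t$. For the first term, $\diag(\pi)\otimes I_d$ is symmetric with eigenvalues $\pi_1,\dots,\pi_M$, each of multiplicity $d$, so the operator norm of $I_{Md}-s(\diag(\pi)\otimes I_d)$ equals $\max_i|1-s\pi_i|$. Plugging in $s=\frac{2}{\pi_{\min}+\pi_{\max}}$, the map $x\mapsto |1-sx|$ is convex and hence maximized over the interval $[\pi_{\min},\pi_{\max}]$ at an endpoint; both endpoints give $\frac{\pi_{\max}-\pi_{\min}}{\pi_{\max}+\pi_{\min}}$. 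Combining the two bounds gives $d_{t+1}\le \frac{\pi_{\max}-\pi_{\min}}{\pi_{\max}+\pi_{\min}}d_t + s\gamma d_t = \zeta\, d_t$, and the hypothesis $\gamma<\pi_{\min}$ forces $\zeta<\frac{\pi_{\max}-\pi_{\min}+2\pi_{\min}}{\pi_{\max}+\pi_{\min}}=1$.

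The one point requiring care is that $\text{GS}(\gamma,a)$ only controls the gradient deviation when every coordinate block satisfies $\bmu_i^t\in\bB(\bmu_i^*,a)$, so the contraction must be phrased as an induction. Assuming the initialization obeys $\|\bmu_i^0-\bmu_i^*\|\le a$ for all $i$, the chain $\|\bmu_i^t-\bmu_i^*\|\le \|\bmu^t-\bmu^*\| = d_t\le \zeta^t d_0\le d_0\le a$ shows every iterate remains in the ball where GS is valid, which closes the induction; iterating $d_{t+1}\le\zeta d_t$ from $t=0$ then produces the stated geometric decay.

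I do not expect a genuine obstacle in this theorem: once Lemma~\ref{lem:grad_q} supplies the exact affine form of $\nabla q$, the argument is the textbook ``contraction of a perturbed linear map'' computation. The only mildly technical pieces are the Kronecker-product operator-norm bookkeeping and the inductive maintenance of the region on which the GS condition applies — both routine.
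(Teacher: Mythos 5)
Your proposal is correct and follows essentially the same route as the paper: the triangle-inequality split of $\|\bmu^{t}-\bmu^*+s\nabla Q(\bmu^t|\bmu^t)\|$ into the exact gradient step on the quadratic $q$ (contracting by $\frac{\pi_{\max}-\pi_{\min}}{\pi_{\max}+\pi_{\min}}$ via the eigenvalues of $\diag(\pi)\otimes I_d$) plus the GS perturbation term, your explicit operator-norm computation of $I_{Md}-s(\diag(\pi)\otimes I_d)$ being just the unpacked form of the paper's appeal to the classical gradient-descent bound for strongly convex quadratics. One small slip in your region-maintenance induction: from $\|\bmu_i^0-\bmu_i^*\|\le a$ for each $i$ you only get $d_0\le\sqrt{M}\,a$, not $d_0\le a$, so the chain $\|\bmu_i^t-\bmu_i^*\|\le d_t\le d_0\le a$ does not close as written — though the paper's own proof does not attempt this induction at all and simply takes the GS hypothesis as given along the trajectory.
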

The proof uses an approximation on gradient and standard techniques in analysis of gradient descent. 
\begin{remark}
It can be verified that the convergence rate is equivalent to that shown in \cite{balakrishnan2014statistical} when applied to GMMs. 
The convergence slows down as the proportion imbalance $\kappa = \pi_{\max}/\pi_{\min}$ increases, which matches the observation in \cite{naim2012convergence}. 
\end{remark}

Now to verify the GS condition, we have the following theorem.

\begin{theorem}[GS condition for general GMM]
If $\rmin=\tilde{\Omega}(\sqrt{\min\{d,M\}})$, and $\bmu_i \in \bB(\bmu_i^*,a), \forall i\in [M]$ where 
$a \leq \half{\rmin} - \sqrt{\min\{d,M\}}\max(4\sqrt{2[\log (\rmin/4)]_+},8\sqrt{3})$,
then $\|\nabla_{\bmu_i} Q(\bmu|\bmu^t) - \nabla_{\bmu_i} q(\bmu)\| \le \frac{\gamma}{M} \sum_{i=1}^M\|\bmu_i^t-\bmu_i^*\| \le \frac{\gamma}{\sqrt{M}}\|\bmu^t-\bmu^*\|$,\\
where $\gamma = M^{2}(2\kappa+4)\left( 2\rmax+ \min\{d,M\} \right)^2\exp\left(-\left(\half{\rmin}-a\right)^2\sqrt{\min\{d,M\}}/8\right)$. \\
Furthermore,
$
\|\nabla Q(\bmu|\bmu^t) - \nabla q(\bmu)\| \le \gamma \|\bmu^t-\bmu^*\|
$.
\label{th:gmm_gs}
\end{theorem}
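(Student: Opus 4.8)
The plan is to prove the first, coordinate-wise bound $\|\nabla_{\bmu_i} Q(\bmu|\bmu^t) - \nabla_{\bmu_i} q(\bmu)\| \le \frac{\gamma}{M}\sum_{j=1}^M\|\bmu_j^t-\bmu_j^*\|$ directly, and then deduce the global bound by Cauchy--Schwarz: $\sum_j \|\bmu_j^t - \bmu_j^*\| \le \sqrt{M}\,\|\bmu^t - \bmu^*\|$, so $\frac{\gamma}{M}\sum_j\|\bmu_j^t-\bmu_j^*\| \le \frac{\gamma}{\sqrt M}\|\bmu^t-\bmu^*\| \le \gamma\|\bmu^t-\bmu^*\|$. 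So the whole content is the first inequality. First I would write both gradients in closed form. By Lemma~\ref{lem:grad_q}, $\nabla_{\bmu_i} q(\bmu) = \pi_i(\bmu_i^*-\bmu_i) = \bE_X[\pi_i w_i(X;\bmu^*)(X-\bmu_i)]$, while from Eq.~\eqref{eq:grad_Q}, $\nabla_{\bmu_i} Q(\bmu|\bmu^t) = \bE_X[\pi_i w_i(X;\bmu^t)(X-\bmu_i)]$. Hence the quantity to control is
\[
\pi_i\,\big\|\bE_X\big[(w_i(X;\bmu^t)-w_i(X;\bmu^*))(X-\bmu_i)\big]\big\|,
\]
and everything reduces to showing that the soft-assignment weights $w_i$ are stable, in an $L^1(\bE_X)$-weighted sense, as the conditioning parameter moves from $\bmu^*$ to $\bmu^t$ inside $\bigcap_i \bB(\bmu_i^*,a)$.

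The second step is to bound $|w_i(X;\bmu^t)-w_i(X;\bmu^*)|$ pointwise. Since $w_i$ is a smooth function of $\bmu$, I would use the mean value theorem along the segment from $\bmu^*$ to $\bmu^t$: $|w_i(X;\bmu^t)-w_i(X;\bmu^*)| \le \sup_{\bar\bmu}\|\nabla_{\bmu}w_i(X;\bar\bmu)\|\cdot\|\bmu^t-\bmu^*\|$, where $\bar\bmu$ ranges over the segment (which stays in the ball product since balls are convex). A direct computation gives $\nabla_{\bmu_j} w_i = w_i(\mathbbm{1}[i=j]-w_j)(X-\bmu_j)$, so $\|\nabla_\bmu w_i(X;\bar\bmu)\| \le \sum_j w_i w_j \|X-\bar\bmu_j\| \le \|X - \bar\bmu\|_{\text{typ}}$ up to constants, and crucially the product $w_i w_j$ for $i\neq j$ — the ``other cluster'' weight — is exponentially small: when $X$ is near $\bmu_k^*$ for the true component $k$, at most one $w_\ell$ is $\Theta(1)$, so $w_i w_j \le \min(w_i,w_j)$ is of order $\exp(-(\text{gap})^2/2)$ with gap $\gtrsim \rmin/2 - a$. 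This is where $\kappa = \pi_{\max}/\pi_{\min}$ enters (the weight ratios $\pi_i/\pi_j$ multiply the Gaussian ratios), and where the separation hypothesis $\rmin = \tilde\Omega(\sqrt{d_0})$ and the radius restriction $a \le \rmin/2 - \sqrt{d_0}\max(4\sqrt{2[\log(\rmin/4)]_+},8\sqrt 3)$ get used to beat the polynomial factors $(2\rmax + d_0)^2$ and $M^2$.

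The third step is to take expectations. Write $\bE_X = \sum_{k} \pi_k \bE_{X\sim N(\bmu_k^*,I_d)}$ and, on the event $k$, bound $|w_i(X;\bmu^t)-w_i(X;\bmu^*)|\cdot\|X-\bmu_i\|$ by (an exponentially small prefactor)$\times$(a polynomial in $\|X-\bmu_k^*\|$, $\rmax$, $d$). The sub-Gaussian / chi-squared tail of $\|X-\bmu_k^*\|$ under $N(\bmu_k^*,I_d)$ lets me split into the bulk region $\|X-\bmu_k^*\| \lesssim \sqrt{d_0} + (\text{gap})/2$, where the exponential bound is uniformly tiny, and the tail region, whose Gaussian mass is itself exponentially small; in both regions the contribution is $\le \frac{\gamma}{M}\|\bmu^t - \bmu^*\|$ after absorbing constants into $\gamma = M^2(2\kappa+4)(2\rmax+d_0)^2\exp(-(\rmin/2 - a)^2\sqrt{d_0}/8)$. (The appearance of $d_0 = \min\{d,M\}$ rather than $d$ reflects that, although $X$ lives in $\bR^d$, the relevant directions separating the $M$ centers span a subspace of dimension at most $\min\{d,M-1\}$; I would project onto that subspace before estimating the Gaussian integrals.) \textbf{The main obstacle} is precisely this last expectation bound: one must carefully balance the exponentially small pointwise factor against the heavy polynomial factors $M^2$, $(2\rmax+d_0)^2$, and the growing moments of $\|X-\bmu_k^*\|$, and show that the radius constraint on $a$ is exactly what forces the exponent $(\rmin/2-a)^2\sqrt{d_0}/8$ to dominate — this is the step that determines the near-optimal contraction radius and is structurally different from the symmetric two-component argument of \cite{balakrishnan2014statistical}, where separability of the weights makes the analysis far simpler.
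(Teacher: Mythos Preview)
Your proposal is correct and follows essentially the same route as the paper: write the gradient difference as $\bE[(w_i(X;\bmu^t)-w_i(X;\bmu^*))(X-\bmu_i)]$, expand $w_i$ along the segment $\bmu^u=\bmu^*+u(\bmu^t-\bmu^*)$ using the explicit formula $\nabla_{\bmu_j}w_i = w_i(\mathbbm{1}[i=j]-w_j)(X-\bmu_j)$, reduce to bounding expectations of the form $\bE[w_i(1-w_i)\|X-\bmu_i\|^p]$ and $\bE[w_iw_j\|X-\bmu_i\|\|X-\bmu_j\|]$ via the mixture decomposition plus a bulk/tail split at radius $r=\Theta(\rmin/2-a)$, and rotate into the $\min\{d,M\}$-dimensional span of the centers to replace $d$ by $d_0$. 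Two minor remarks: the gradient is $\bE[w_i(X;\cdot)(X-\bmu_i)]$ without the extra $\pi_i$ you wrote (harmless here since $\pi_i\le 1$), and to land exactly on the per-coordinate bound $\frac{\gamma}{M}\sum_j\|\bmu_j^t-\bmu_j^*\|$ you should apply the mean value expansion coordinate-by-coordinate (as the paper does, obtaining separate coefficients $U_j$ multiplying each $\|\bmu_j^t-\bmu_j^*\|$) rather than via the aggregated bound $\sup\|\nabla_\bmu w_i\|\cdot\|\bmu^t-\bmu^*\|$.
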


\begin{proof}[Proof sketch of Theorem \ref{th:gmm_gs}]
W.l.o.g. we show the proof with the first cluster, consider the difference of the gradient corresponding to $\bmu_1$.
\beq{
\bsplt{
\nabla_{\bmu_1} Q(\bmu^t|\bmu^t) - \nabla_{\bmu_1} q(\bmu^t)  = &  \bE (w_1(X;\bmu^t)- w_1(X;\bmu^*))(X-\bmu_1^t)
}
\label{eq:gs_lhs_step1}
}
For any given $X$, consider the function $\bmu \to w_1(X;\bmu)$, we have
\beq{
\bsplt{
\nabla_{\bmu} w_1(X;\bmu) =&
\begin{pmatrix} w_1(X;\bmu) (1-w_1(X;\bmu) )(X-\bmu_1)^T \\
- w_1(X;\bmu) w_2(X;\bmu) (X-\bmu_2)^T\\
\vdots\\
- w_1(X;\bmu) w_M(X;\bmu) (X-\bmu_M)^T
\end{pmatrix}
}
\label{eq:diff_w1}
}
Let $\bmu^u = \bmu^*+u(\bmu^t-\bmu^*), \forall u\in [0,1]$, obviously $\bmu^u \in \Pi_{i=1}^M \bB(\bmu_i^*, \|\bmu_i^t-\bmu_i^*\|) \subset \Pi_{i=1}^M \bB(\bmu_i^*, a)$.
By Taylor's theorem,
\beq{
\bsplt{
&\| \bE (w_1(X;\bmu_1^t) -w_1(X;\bmu_1^*))(X-\bmu_1^t)\| = \left\Vert\bE \left[\int_{u=0}^1 \nabla_{u} w_1(X;\bmu^u) du (X-\bmu_1^t)  \right]\right\Vert\\
 \le &  U_1 \|\bmu_1^t-\bmu_1^*\|_2 + \sum_{i\ne 1}U_i\|\bmu_i^t-\bmu_i^*\|_2 \le \max_{i\in [M]}\{U_i\} \sum_i \|\bmu_i^t-\bmu_i^*\|_2
}
\label{eq:u1_plus_ui_main}
}
where 
\bas{
U_1 =& \sup_{u\in [0,1]} \| \bE w_1(X;\bmu^u) (1-w_1(X;\bmu^u) ) (X-\bmu_1^t)(X-\bmu_{1}^u)^T \|_{op}\\
U_i =& \sup_{u\in [0,1]} \| \bE w_1(X;\bmu^u) w_i(X;\bmu^u) (X-\bmu_1^t)(X-\bmu_{2}^u)^T \|_{op}
}
Bounding them with careful analysis on Gaussian distribution 
yields the result. The technical details are deferred to Appendix \ref{app:proof_popu}.
\end{proof}

\section{Sample-based Convergence}
\label{sec:sample_proof_sketch}

In this section we present the proof sketch for sample-based convergence of gradient EM. 
The main ingredient in proof of Theorem~\ref{th:sample} is the result of the following theorem, which develops an uniform upper bound for the differences between sample-based gradient and population gradient on each cluster center. 
\begin{theorem}[Uniform bound for sample-based gradient EM]
Denote $\mathbb{A}$ as the contraction region $\Pi_{i=1}^M\bB(\bmu_i^*,a)$.  Under the condition of Theorem \ref{th:population_contraction}, with probability at least $1-\exp\left( -cd \log n\right)$, 
\bas{
\sup_{\bmu \in \mathbb{A}}\norm{G^{(i)}(\bmu)-G_n^{(i)}(\bmu)}<\epsilon^{\text{unif}}(n); \qquad \forall i\in [M]
}
where $\epsilon^{\text{unif}}(n)$ is as defined in Eq.~\eqref{eq:def_epsilon}.
\label{th:sample_eps}
\end{theorem}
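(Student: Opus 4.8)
The plan is to reduce the uniform deviation of the empirical gradient to a concentration statement for the scalar suprema $g_i^u$ defined in Eq.~\eqref{eq:def_g}, then control those by combining symmetrization, the Rademacher bound of Proposition~\ref{prop:rademacher_bound}, a high-probability version of McDiarmid's inequality, and a covering argument over the unit sphere. From the definitions of $G$ and $G_n$, $G^{(i)}(\bmu)-G_n^{(i)}(\bmu)=s\pi_i\big(\bE_X[w_i(X;\bmu)(X-\bmu_i)]-\tfrac1n\sum_j w_i(X_j;\bmu)(X_j-\bmu_i)\big)$, with $s\pi_i\le 2\pi_{\max}/(\pi_{\min}+\pi_{\max})\le 2$. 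Writing $\|B\|=\sup_{u\in\cS^{d-1}}\langle B,u\rangle$ and using that $-u$ also ranges over $\cS^{d-1}$, we get $\sup_{\bmu\in\mathbb A}\|G^{(i)}(\bmu)-G_n^{(i)}(\bmu)\|=s\pi_i\sup_{u\in\cS^{d-1}}g_i^u(X)$; by the symmetry already noted in Section~\ref{sec:sample_proof_sketch} it suffices to bound $\sup_{u\in\cS^{d-1}}g_1^u(X)$ and union-bound over $i\in[M]$ at the very end (a factor $M$ in the failure probability only).

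For a fixed $u$, the symmetrization lemma~\cite{mohri2012foundations} gives $\bE_X g_1^u(X)\le 2R_n(\cF_1^u)$, so Proposition~\ref{prop:rademacher_bound} bounds $\bE_X g_1^u(X)$ by $\tilde O\big(M^{3/2}(1+\rmax)^3\sqrt d\max\{1,\log\kappa\}/\sqrt n\big)$, the source of the first argument of the maximum in Eq.~\eqref{eq:def_epsilon}. To control the fluctuation of $g_1^u$ around its mean, view it as a function of $(X_1,\dots,X_n)$: perturbing one $X_j$ changes it by at most $\tfrac1n(\|X_j-\bmu_1\|+\|X_j'-\bmu_1\|)$ (since $w_1\in[0,1]$ and $\|\bmu_1-\bmu_1^*\|\le a$), which is not uniformly bounded, so classical McDiarmid does not apply. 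But conditionally on its component $X_j-\bmu^*_{z_j}\sim N(0,I_d)$, and a chi-square tail bound shows $\max_j\|X_j-\bmu_1\|\le \rmax+O(\sqrt{d\log n})$ outside an event of probability $n^{-\Omega(d)}$. On this good event the bounded-difference constant is $\tfrac cn(1+\rmax+\sqrt{d\log n})$, so the extension of McDiarmid's inequality for sequences with bounded differences with high probability~\cite{combes2015extension}, applied with deviation level $\tilde O((1+\rmax)d/\sqrt n)$, yields $g_1^u(X)\le \bE_X g_1^u(X)+\tilde O((1+\rmax)d/\sqrt n)$ with probability at least $1-n^{-c_0d}$ --- the second argument of the maximum in Eq.~\eqref{eq:def_epsilon}.

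Finally, to pass from a fixed $u$ to a uniform statement I would take an $\epsilon$-net $\cN$ of $\cS^{d-1}$ with $|\cN|\le(3/\epsilon)^d$: for any $u$ with nearest net point $u'$, $|g_1^u(X)-g_1^{u'}(X)|\le\epsilon\max_{\bmu\in\mathbb A,\,j}\|X_j-\bmu_1\|=\tilde O(\epsilon(1+\rmax+\sqrt{d\log n}))$ on the same good event, so taking $\epsilon$ a small absolute constant absorbs the discretization error into $\epsilon^{\text{unif}}(n)$. A union bound of the preceding per-$u$ estimate over $\cN$ and over $i\in[M]$ multiplies the failure probability by $M(3/\epsilon)^d$, which is still swallowed by $\exp(-cd\log n)$ for $n$ large after adjusting constants (and the chi-square bad event adds only $n^{-\Omega(d)}$). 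Combining this with $s\pi_i=O(1)$ gives $\sup_{\bmu\in\mathbb A}\|G^{(i)}(\bmu)-G_n^{(i)}(\bmu)\|<\epsilon^{\text{unif}}(n)$ for all $i\in[M]$, as claimed.

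The main obstacle is the concentration step: since the summands are unbounded Gaussian-weighted terms, one must choose the right "typical" set on which the bounded-difference property holds, verify that its complement has probability $n^{-\Omega(d)}$ so it survives the union bound over the sphere net, and tune the deviation level in the extended McDiarmid inequality to land precisely the $(1+\rmax)d/\sqrt n$ term. The other usual difficulty --- that the weights $w_i$ couple all of $\bmu_1,\dots,\bmu_M$, so $\cF_1^u$ is not a Lipschitz image of a one-parameter class and the classical Ledoux--Talagrand contraction fails --- is not an issue here, having already been handled in Proposition~\ref{prop:rademacher_bound} via the vector contraction lemma of~\cite{maurer2016vector}.
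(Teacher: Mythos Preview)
Your approach mirrors the paper's: reduce to the scalar suprema $g_i^u$, bound the mean by symmetrization and Proposition~\ref{prop:rademacher_bound}, control the fluctuation via the high-probability McDiarmid of~\cite{combes2015extension}, then net $\cS^{d-1}$. One genuine step you elide: the inequality from~\cite{combes2015extension} (Theorem~\ref{th:extension_mcdiarmid} here) concentrates $g_1^u$ around the \emph{conditional} expectation $m_r=\bE[g_1^u\mid \bX\in\cY_r]$, not around the unconditional $\bE g_1^u$. The paper therefore carries out a separate calculation bounding $|m_r-\bE g_1^u|$ via $\bE[g_1^u\cdot 1(\bX\notin\cY_r)]$, and this is what drives the larger truncation $r=\Theta(\sqrt d\,(1+\rmax)\log^2 n)$ rather than your $O(\sqrt{d\log n})$; without that step your claimed bound $g_1^u\le\bE g_1^u+\tilde O((1+\rmax)d/\sqrt n)$ does not follow from the cited theorem as stated. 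A minor simplification the paper uses for the covering: instead of bounding $|g_1^u-g_1^{u'}|$ on the good event, it observes $|g_1^v-g_1^{u^{(j)}}|\le Z_1\|v-u^{(j)}\|$ directly, giving $Z_1\le 2\max_j g_1^{u^{(j)}}$ on a $\tfrac12$-net with no extra conditioning.
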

\begin{remark}
	It is worth pointing out that, the first part of the bound on $\epsilon^{\text{unif}(n)}$ in Eq.~\eqref{eq:def_epsilon} comes from the Rademacher complexity, which is optimal in terms of the order of $n$ and $d$. However the extra factor of $\sqrt{d}$ and $\log(n)$ comes from the altered McDiarmid's inequality, tightening which will be left for future work.
\end{remark}

\begin{proof}[Proof sketch of Theorem~\ref{th:sample_eps}]
	
	Denote $Z_i=\sup_{\bmu \in \mathbb{A}}\norm{G^{(i)}(\bmu)-G^{(i)}_n(\bmu)}$. Recall $g_i^u(X)$ defined in Eq.~\eqref{eq:def_g}, then it is not hard to see that $Z_i=\sup_{u\in \cS^{d-1}} g_i^u(X)$. Consider a $\frac{1}{2}$-covering $\{u^{(1)},\cdots ,u^{(K)}\}$ of the unit sphere $\cS^{d-1}$, where $K$ is the covering number of an unit sphere in $d$ dimensions. We can show that $Z_i \le 2 \max_{j=1,\cdots, K} g_i^{u^{(j)}}(X)$. 

	As we state below in Lemma~\ref{lem:zuj_minus_ezuj}, we have for each $u$, with probability at least $1-\exp\left(-cd\log n\right)$, 
	$ g_i^{u}(X) = \tilde{O}(\max\{ R_n(\cF_i^u), (1+\rmax)d/\sqrt{n} \})$. Plugging in the Rademacher complexity from Proposition \ref{prop:rademacher_bound}, standard bounds on $K$, and applying union bound,
	we have
	\bas{
		Z_i\le \tilde{O}(\max\{ n^{-1/2}M^3(1+\rmax)^3\sqrt{d}\max\{1,\log(\kappa)\}, (1+\rmax)d/\sqrt{n}\})
	}
	with probability at least $1-\exp\left(2d-cd\log n\right)=1-\exp\left(-c'd\log n\right)$. 
\end{proof}

Iteratively applying Theorem~\ref{th:sample_eps}, we can bound the error in $\bmu$ for the sample updates. The full proof is deferred to Appendix~\ref{app:proof_sample}. The key step is the following lemma, which bounds the $g_i^u(X)$ for any given $u\in \cS^{d-1}$. Without loss of generality we prove the result for $i=1$.

\begin{lemma}
	Let $u$ be a unit vector. $X_i, i=1,\cdots,n$ are i.i.d. samples from $\text{GMM}(\pi,\bmu^*,I_d)$. $g_1^u(X)$ as defined in Eq.~\eqref{eq:def_g}. Then with probability $1-\exp(-cd\log n)$ for some constant $c>0$, 
	$ g_1^u(X)=\tilde{O}(\max\{ R_n(\cF_1^u), (1+\rmax)d/\sqrt{n} \})$.
	\label{lem:zuj_minus_ezuj}
\end{lemma}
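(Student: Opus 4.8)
The plan is to control $g_1^u(X)$ by splitting it into (i) a bound on its mean via symmetrization and (ii) a concentration bound around that mean. For (i) I would apply the standard symmetrization lemma~\cite{mohri2012foundations} to the class $\cF_1^u$ of Eq.~\eqref{def:function_class}, which gives $\bE_X\, g_1^u(X)\le 2R_n(\cF_1^u)$; this produces the first term inside the maximum in the claim. It then remains to show that $g_1^u(X)$ exceeds $\bE_X\, g_1^u(X)$ by at most $\tilde O((1+\rmax)d/\sqrt n)$ with probability at least $1-n^{-cd}$.

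The natural tool for (ii) is McDiarmid's inequality applied to $g_1^u$ as a function of the i.i.d.\ sample $(X_1,\dots,X_n)$. The obstruction, as flagged in the text, is that replacing a single $X_j$ by $X_j'$ changes $g_1^u$ by at most $\tfrac1n\bigl(|\ip{X_j-\bmu_1,u}|+|\ip{X_j'-\bmu_1,u}|\bigr)$, which is unbounded because of the Gaussian tails, so the bounded-difference hypothesis fails. To get around this I would introduce the event $E=\{\norm{X_j}\le C(\sqrt d+\rmax)\sqrt{\log n}\ \text{for all }j\in[n]\}$. Using a $\chi^2$-type tail bound for each $\norm{X_j-\bmu^*_{Z_j}}^2$, the bound $\norm{\bmu_i^*}\le\rmax$ implied by $\sum_i\pi_i\bmu_i^*=0$, and a union bound over the $n$ samples, one gets $\Pr[E^c]\le n^{-cd}$. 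On $E$, since $|w_1|\le 1$ and $\bmu\in\bA$ forces $\norm{\bmu_1}\le\rmax+a$, every one-coordinate difference of $g_1^u$ is at most $c:=C'(1+\rmax)\sqrt{d\log n}/n$, uniformly in $\bmu$ and $u$.

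Since the bounded-difference property holds only on the high-probability set $E$, I would then invoke the extension of McDiarmid's inequality of~\cite{combes2015extension}, which upgrades ``bounded differences on a set of probability $1-\delta$'' into a concentration statement at the cost of an additive $\delta$ in the failure probability (plus a correction governed by a crude global bound, negligible here since $\delta=n^{-cd}$ is exponentially small). Choosing the deviation level $\epsilon$ so that $\exp(-2\epsilon^2/(nc^2))=n^{-cd}$ gives $\epsilon=\Theta(c\sqrt{nd\log n})=\tilde O\bigl((1+\rmax)d/\sqrt n\bigr)$, hence $g_1^u(X)\le\bE[g_1^u(X)\mid E]+\tilde O((1+\rmax)d/\sqrt n)$ with probability at least $1-n^{-cd}$. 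Finally, because $E$ has probability at least $1-n^{-cd}$ and $g_1^u$ has only polynomially growing tails, $\bE[g_1^u\mid E]$ differs from $\bE_X\, g_1^u\le 2R_n(\cF_1^u)$ by a lower-order term; combining the two estimates yields $g_1^u(X)=\tilde O(\max\{R_n(\cF_1^u),(1+\rmax)d/\sqrt n\})$.

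I expect the main obstacle to be the careful bookkeeping in the extended-McDiarmid step: one must check the precise hypotheses of~\cite{combes2015extension}, in particular bounding the contribution of the ``bad'' configurations where some $\norm{X_j}$ is large and $g_1^u$ is genuinely unbounded, and track how the $\sqrt{\log n}$ from the norm truncation combines with the $\sqrt{d\log n}$ from the deviation level --- this is precisely where the extra $\sqrt d$ and $\log n$ factors noted in the remark after Theorem~\ref{th:sample_eps} enter. The symmetrization bound and the Gaussian tail computation are routine by comparison.
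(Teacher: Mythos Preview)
Your plan is correct and follows essentially the same route as the paper: symmetrization to bound $\bE g_1^u\le 2R_n(\cF_1^u)$, a high-probability truncation event on which $g_1^u$ has bounded differences, the extended McDiarmid inequality of~\cite{combes2015extension}, and a comparison of $\bE[g_1^u\mid E]$ with $\bE g_1^u$. The paper truncates $\|X_j-\bmu^*_{Z_j}\|$ rather than $\|X_j\|$ and, using the weaker tail of Lemma~\ref{lem:norm_of_gaussian}, takes a larger radius $r=\Theta((1+\rmax)\sqrt{d}\log^2 n)$; your sharper $\chi^2$ tail and smaller radius work equally well. The one place where the paper invests real work and you only sketch is the last step: the paper explicitly bounds $\bE[g(X)\mathbf 1(X\notin\cY_r)]$ by writing $|g(X)|\le \tfrac1n\sum_j\|X_j-\bmu^*_{Z_j}\|+\bE\|X-\bmu^*_{Z_X}\|+4\rmax$ and splitting the cross terms $\bE[\|X_j-\bmu^*_{Z_j}\|\mathbf 1(X\in(\cY_r^i)^c)]$ into the independent case $j\ne i$ and the coupled case $j=i$, each handled by the Gaussian tail lemmas; your ``polynomially growing tails'' remark is the right intuition but does need this decomposition to be made rigorous.
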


The quantity $g_1^u(X)$ depends on the sample, the idea for proving Lemma~\ref{lem:zuj_minus_ezuj} is to show it concentrates around its expectation when sample size is large. 
Note that when the function class has bounded differences (changing one data point changes the function by a bounded amount almost surely), as in the case in many risk minimization problems in supervised learning, the McDiarmid's inequality can be used to achieve concentration. 
However the function class we define in Eq.~\eqref{def:function_class} is not bounded almost everywhere, but with high probability, hence the classical result does not apply. 
Conditioning on the event where the difference is bounded, we use an extension of McDiarmid's inequality by \cite{combes2015extension}. For convenience, we restate a weaker version of the theorem using our notation below.

\begin{theorem}[\cite{combes2015extension}]
Consider independent random variable $X=(X_1,\cdots, X_n)$ in the product probability space $\cX=\prod_{i=1}^n \cX_i$, where $\cX_i$ is the probability space for $X_i$. Also consider a function $g:\cX\to \bR$. If there exists a subset $\cY\subset \cX$, and a scalar $c>0$, such that
\bas{
|g(x)-g(y)|\le L, \forall x,y\in \cY, x_j=y_j,\forall j\ne i.
}
Denote $p=1-P(X\in \cY)$, then 
$P(g(X) - \bE[g(X)|X\in \cY]\ge \epsilon) \le p+\exp\left( -\frac{2(\epsilon-npL)_+^2}{nL^2} \right).$
\label{th:extension_mcdiarmid}
\end{theorem}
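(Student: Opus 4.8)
\textbf{Proof proposal for Theorem~\ref{th:extension_mcdiarmid}.} The plan is to first peel off the bad event $\{X\notin\cY\}$, which contributes the additive $p$, and then on the good event replace $g$ by a globally Hamming-Lipschitz surrogate to which the classical bounded-differences argument applies; the price of this replacement is the $npL$ shift in the exponent. Write $A=\{X\in\cY\}$, so $P(A^{c})=p$. Then $P\big(g(X)-\bE[g(X)\mid A]\ge\epsilon\big)\le p+P\big(\{g(X)\ge\bE[g(X)\mid A]+\epsilon\}\cap A\big)$, and it remains to bound the last term by $\exp\!\big(-2(\epsilon-npL)_{+}^{2}/(nL^{2})\big)$.

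Next I would introduce the Hamming distance $d_{H}(x,y)=\#\{j:x_{j}\ne y_{j}\}$ on $\cX=\prod_{i}\cX_{i}$ and define the majorant $\bar g(x):=\sup_{y\in\cY}\{\,g(y)-L\,d_{H}(x,y)\,\}$. Changing a single coordinate of $x$ changes $d_{H}(x,y)$ by at most $1$ for every $y$, so $\bar g$ has bounded differences $L$ on all of $\cX$; and taking $y=x$ gives $\bar g(x)\ge g(x)$ for every $x\in\cY$. Hence on $A$ we have $g(X)\le\bar g(X)$, so the residual probability above is at most $P\big(\bar g(X)\ge\bE[g(X)\mid A]+\epsilon\big)$, and the classical McDiarmid (equivalently Azuma--Hoeffding applied to the Doob martingale of $\bar g$) yields $P\big(\bar g(X)\ge\bE[\bar g(X)]+t\big)\le\exp\!\big(-2t^{2}/(nL^{2})\big)$ for every $t\ge0$. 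Setting $t=\bE[g(X)\mid A]+\epsilon-\bE[\bar g(X)]$, the theorem follows once we establish $\bE[\bar g(X)]\le\bE[g(X)\mid A]+npL$ (when $\epsilon<npL$ the claimed bound is trivial).

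The heart of the argument is therefore the mean-shift estimate $\bE[\bar g(X)]\le\bE[g(X)\mid A]+npL$. For $x\in\cY$ the supremum defining $\bar g(x)$ is dominated by $y$ close to $x$ in Hamming distance because of the $-L\,d_{H}$ penalty; walking along a shortest $x$--$y$ path, each step that remains inside $\cY$ moves $g$ by at most $L$ (the single-coordinate hypothesis) and is exactly offset by the penalty, so the only net gain of $\bar g(x)$ over $g(x)$ comes from path steps that exit $\cY$, and a counting/averaging argument over the $n$ coordinates bounds the expected total gain, together with the contribution to $\bE[\bar g(X)]$ from the event $\{X\notin\cY\}$ itself, by $npL$. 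This is where the bad-event probability $p$ re-enters the exponent.

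\textbf{Main obstacle.} The crux is precisely this last step. The hypothesis furnishes only a \emph{single-coordinate} bounded difference on $\cY$, and this does \emph{not} chain into a Hamming-Lipschitz bound on $\cY$, since a shortest path between two points of $\cY$ may leave $\cY$; consequently $g$ cannot in general be extended from $\cY$ to a globally $L$-Lipschitz function, and one must instead quantify how far the regularization $\bar g$ overshoots $g$ on average. An equivalent route is to work directly with the Doob martingale $M_{k}=\bE[g(X)\mid X_{1},\dots,X_{k}]$ and show each increment is bounded by $L$ off a $p$-probability event and truncated on it, producing the same $(\epsilon-npL)_{+}^{2}/(nL^{2})$ form via Azuma--Hoeffding; the bookkeeping there is the same difficulty in different clothing, and I expect it to be the most delicate part of the proof.
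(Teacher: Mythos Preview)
The paper does not provide its own proof of this statement; Theorem~\ref{th:extension_mcdiarmid} is quoted from \cite{combes2015extension} as an external tool (see the sentence immediately preceding it: ``For convenience, we restate a weaker version of the theorem using our notation below''), so there is no in-paper argument to compare your proposal against.

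On the proposal itself: the outline---split off $\{X\notin\cY\}$, pass to a globally $L$-bounded-differences surrogate, apply classical McDiarmid, and absorb the discrepancy as an $npL$ shift in the exponent---is the natural strategy and matches the spirit of the cited result. You also correctly locate the crux. But the step you label the ``main obstacle'' is a genuine gap in your sketch, not merely delicate bookkeeping. Your majorant $\bar g(x)=\sup_{y\in\cY}\{g(y)-L\,d_H(x,y)\}$ is indeed globally $L$-Lipschitz in Hamming distance, and $\bar g\ge g$ on $\cY$; however, because the single-coordinate bound on $g$ is assumed only for pairs \emph{both} lying in $\cY$, a shortest Hamming path between two points of $\cY$ can leave $\cY$, along which $g$ is entirely uncontrolled. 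Hence $\bar g$ can overshoot $g$ on $\cY$ by an amount that is not obviously $O(npL)$ in expectation, and the ``counting/averaging argument over the $n$ coordinates'' you invoke is never supplied. Without a rigorous version of that estimate---or a switch to the direct Doob-martingale route you mention at the end, where each increment is bounded by $L$ off a $p$-probability event and truncated on it---the argument is incomplete.
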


It is worth pointing out that in Theorem~\ref{th:extension_mcdiarmid}, the concentration is shown in reference to the conditional expectation $\bE[g(X)|X\in \cY]$ when the data points are in the bounded difference set. So to fully achieve the type of bound given by McDiarmid's inequality, we need to further bound the difference of the conditional expectation and the full expectation. Combining the two parts we will be able to show that, the empirical difference is upper bounded using the Rademacher complexity.

Note that with a different martingale concentration result, we can achieve a tighter upper bound for the empirical difference with a cleaner proof. We defer the analysis in Appendix~\ref{app:tighter_sample_bound}.

Now it remains to derive the Rademacher complexity under the given function class. Note that when the function class is a contraction, or Lipschitz with respect to another function (usually of a simpler form), one can use the Ledoux-Talagrand contraction lemma \cite{ledoux2013probability} to reduce the Rademacher complexity of the original function class to the Rademacher complexity of the simpler function class. This is essential in getting the Rademacher complexities for complicated function classes. As we mention in Section \ref{sec:mainres}, our function class in Eq.~\eqref{def:function_class} is unfortunately not Lipschitz due to the fact that it involves all cluster centers even for the gradient on one cluster. We get around this problem by introducing a vector valued function, and show that the functions in Eq.~\eqref{def:function_class} are Lipschitz in terms of the vector-valued function. In other words, the absolute difference in the function when the parameter changes is upper bounded by the norm of the vector difference of the vector-valued function. Then we build upon the recent vector-contraction result from \cite{maurer2016vector}, and prove the following lemma under our setting. 

\begin{lemma}
Let $X$ be nontrivial, symmetric and sub-gaussian. Then there exists a constant $C<\infty$, depending only on the distribution of $X$, such that for any subset $\cS$ of a separable Banach space and function $h_i:\cS \to \bR$, $f_i: \cS\to \bR^k$, $i\in [n]$ satisfying
$ \forall s,s'\in \cS, |h_i(s)-h_i(s')|\le L \|f(s)-f(s')\|$. If $\epsilon_{ik}$ is an independent doubly indexed Rademacher sequence, 
we have,
\bas{
\bE \sup_{s\in \cS}\sum_i \epsilon_i h_i(s)\le \bE \sqrt{2}L \sup_{s\in \cS}\sum_{i,k}\epsilon_{ik}f_i(s)_k,
}
where $f_i(s)_k$ is the $k$-th component of $f_i(s)$.
\label{lem:vector_contraction_separable}
\end{lemma}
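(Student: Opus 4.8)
The plan is to derive Lemma~\ref{lem:vector_contraction_separable} as a corollary of the vector-contraction inequality of Maurer~\cite{maurer2016vector}, which in its original form is stated for the symmetric \emph{uniform} Rademacher sequence and reads
\[
\bE \sup_{s\in \cS}\sum_i \epsilon_i h_i(s)\le \sqrt{2}\,L\, \bE \sup_{s\in \cS}\sum_{i,k}\epsilon_{ik}f_i(s)_k
\]
under the hypothesis $|h_i(s)-h_i(s')|\le L\|f_i(s)-f_i(s')\|$. So the content of this lemma is really the reduction from a \emph{general} nontrivial symmetric sub-gaussian multiplier sequence $X$ (which is what arises naturally in our symmetrization step, since the Gaussian data enters multiplicatively) back to the canonical Rademacher case. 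The route I would take is a standard multiplier-comparison argument: write each $X_i = |X_i|\cdot \mathrm{sign}(X_i)$, observe that by symmetry $\mathrm{sign}(X_i)$ has the law of a Rademacher variable and is independent of $|X_i|$, and condition on $(|X_i|)_i$.

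First I would set up the conditioning: fix the magnitudes $r_i := |X_i|$ and let $\epsilon_i := \mathrm{sign}(X_i)$, so that conditionally the $\epsilon_i$ are i.i.d. Rademacher. Then $\bE\big[\sup_{s}\sum_i X_i h_i(s)\,\big|\,(r_i)\big] = \bE_\epsilon \sup_s \sum_i \epsilon_i (r_i h_i)(s)$. Now apply Maurer's inequality with $h_i$ replaced by $r_i h_i$ and $f_i$ replaced by $r_i f_i$: the Lipschitz constant scales correctly since $|r_i h_i(s) - r_i h_i(s')| \le L r_i \|f_i(s)-f_i(s')\| = L\|r_i f_i(s) - r_i f_i(s')\|$. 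This gives, conditionally, a bound of $\sqrt2 L\,\bE_\epsilon \sup_s \sum_{i,k}\epsilon_{ik} r_i f_i(s)_k$, and re-attaching the signs $r_i \epsilon_{ik} \stackrel{d}{=} X_i \epsilon_{ik}'$ (or simply noting $r_i\epsilon_{ik}$ has the same distribution as $\epsilon_{ik}'$ scaled by $r_i$, which one then wants to compare back to an unweighted Rademacher sum). The cleanest finish is to take expectations over $(r_i)$ on both sides and, on the right, use a second comparison (contraction for the $1$-Lipschitz maps $t\mapsto r_i t$ after renormalizing, or a direct sub-gaussian moment comparison) to replace $r_i\epsilon_{ik}$ by a plain Rademacher $\epsilon_{ik}$ at the cost of an absolute constant absorbed into $C$ — which is why the statement only claims a distribution-dependent constant $C<\infty$ rather than the sharp $\sqrt2$. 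One has to be slightly careful that the right-hand side is stated in the lemma with $\epsilon_{ik}$ alone (no $C$), so in fact I would fold the multiplier comparison so that the final inequality reads with the universal $\sqrt2$ and the distribution-dependent constant sits implicitly in the normalization of $X$; alternatively one restates with $C$ on the right, consistent with how the lemma is invoked later.

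The main obstacle is handling the unboundedness of the sub-gaussian multipliers cleanly: Maurer's theorem wants finite quantities, and $\sup_s \sum_i X_i h_i(s)$ need only be integrable under mild growth conditions on $h_i$ over the (possibly unbounded) index set $\cS$. So I would either add the mild implicit assumption that the suprema are finite (which holds in our application, where $\cS = \bA$ is bounded and the $h_i$, $f_i$ are the bounded weight-times-inner-product functions), or truncate $X_i$ at a level growing slowly, apply the bounded-multiplier version, and pass to the limit by monotone/dominated convergence using the sub-gaussian tail to control the truncation error. The symmetrization and sign-splitting steps are routine once this integrability bookkeeping is in place; the only genuinely quantitative input is the one-dimensional fact that $\bE|X|<\infty$ for sub-gaussian $X$, which feeds the constant $C$. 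With that, the lemma follows, and it is exactly the tool needed to bound $R_n(\cF_i^u)$ in Proposition~\ref{prop:rademacher_bound} by reducing the non-Lipschitz scalar class to the vector-valued class whose components each depend on a single $\bmu_i$.
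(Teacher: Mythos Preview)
You have misidentified the content of the lemma. The hypotheses about $X$ being nontrivial, symmetric, sub-gaussian and the constant $C$ are copied verbatim from Maurer's original statement (Theorem~3 of \cite{maurer2016vector}, restated in the paper as Lemma~\ref{lem:maurer}); the displayed inequality in both Maurer's version and the present lemma already involves only Rademacher variables $\epsilon_i,\epsilon_{ik}$ and the factor $\sqrt{2}L$. There is no ``reduction from a general sub-gaussian multiplier sequence to Rademacher'' to be carried out --- that is not what distinguishes Lemma~\ref{lem:vector_contraction_separable} from Maurer's result.

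The actual difference, and the only thing to be proved, is in the index set: Maurer's theorem is stated for a \emph{countable} set $\cS$, whereas Lemma~\ref{lem:vector_contraction_separable} allows $\cS$ to be an arbitrary subset of a separable Banach space (needed because in the application $\cS=\bA=\Pi_i\bB(\bmu_i^*,a)\subset\bR^{Md}$ is uncountable). The paper's proof proceeds by: (i) picking a countable dense subset $\cS_0\subset\cS$, (ii) applying Maurer's theorem on $\cS_0$, and (iii) verifying via continuity of $s\mapsto\sum_i\epsilon_ih_i(s)$ and $s\mapsto\sum_{i,k}\epsilon_{ik}f_i(s)_k$ that $\sup_{\cS}=\sup_{\cS_0}$ on both sides, which simultaneously handles measurability of the supremum. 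Your conditioning/sign-splitting argument, whatever its independent merits, never touches this point and therefore does not prove the lemma.
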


\begin{remark}
In contrast to the original form in~\cite{maurer2016vector}, we have a $\cS$ as a subset of a separable Banach Space. The proof uses 
standard tools from measure theory, and is to be found in Appendix~\ref{app:proof_sample}. 
\end{remark}

This equips us to prove Proposition~\ref{prop:rademacher_bound}. 
\begin{proof}[Proof sketch of Proposition~\ref{prop:rademacher_bound}]
For any unit vector $u$, the Rademacher complexity of $\cF_1^u$ is
\beq{
\bsplt{
R_n(\cF_1^u) =& \bE_X \bE_\epsilon \sup_{\bmu\in \bA} \frac{1}{n}\sum_{i=1}^n \epsilon_i w_1(X_i;\bmu)\ip{X_i-\bmu_1,u}\\
\le & \underbrace{\bE_X \bE_\epsilon \sup_{\bmu\in \bA}\frac{1}{n}\sum_{i=1}^n \epsilon_i w_1(X_i;\bmu)\ip{X_i,u}}_{(D)} +  \underbrace{\bE_X \bE_\epsilon \sup_{\bmu\in \bA} \frac{1}{n}\sum_{i=1}^n \epsilon_iw_1(X_i;\bmu)\ip{\bmu_1,u}}_{(E)}
}
\label{eq:mgf_cauchy_schwarz}
}
We bound the two terms separately. 
Define $\eta_j(\bmu): \mathbb{R}^{Md}\to \mathbb{R}^{M}$ to be a vector valued function with the $k$-th coordinate  
\bas{
[\eta_j(\bmu)]_k=\frac{\norm{\bmu_1}^2}{2}-\frac{\norm{\bmu_k}^2}{2}+\langle X_j,\bmu_k-\bmu_1\rangle+\log\left(\frac{\pi_k}{\pi_1}\right)
}

\begin{flalign}
&\text{ It can be shown that } \qquad |w_1(X_j;\bmu)-w_1(X_j;\bmu')|\leq \frac{\sqrt{M}}{4}\norm{\eta_j(\bmu)-\eta_j(\bmu')}&
\label{eq:lipschitz}
\end{flalign}

Now let $\psi_1(X_j;\bmu)=w_1(X_j;\bmu)\langle X_j,u \rangle$. 
With Lipschitz property \eqref{eq:lipschitz} and Lemma~\ref{lem:vector_contraction}, 
we have
\bas{\bE \left[{\sup_{\bmu \in \mathbb{A}}\frac{1}{n}\sum_{j=1}^n\epsilon_j w_i(X_j;\bmu)\langle X_j,u \rangle}\right] \le 
\bE \left[ \frac{\sqrt{2}\sqrt{M}}{4n}\sup_{\bmu \in \mathbb{A}}\sum_{j=1}^n\sum_{k=1}^M \epsilon_{jk}[\eta_j(\bmu)]_k  \right]
}
The right hand side can be bounded with tools regarding independent sum of sub-gaussian random variables. Similar techniques apply to the $(E)$ term. Adding things up we get the final bound. 
\end{proof}

\section{Experiments}
\label{sec:exp}
In this section we collect some numerical results.
In all experiments we set the covariance matrix for each mixture component as identity matrix $I_d$ and define signal-to-noise ratio (SNR) as $R_{\min}$.

{\bf Convergence Rate}
We first evaluate the convergence rate and compare with those given in Theorem \ref{th:contraction_with_gamma} and Theorem \ref{th:gmm_gs}. 
For this set of experiments, we use a mixture of 3 Gaussians in 2 dimensions. In both experiments $\rmax/\rmin=1.5$.
In different settings of $\bm{\pi}$, we apply gradient EM with varying SNR from 1 to 5. For each choice of SNR, we perform 10 independent trials with $N=12,000$ data points. The average of $\log\norm{\bmu^t-\hat{\bmu}}$ and the standard deviation are plotted versus iterations. In Figure~\ref{fig:snr} (a) and (b) we plot balanced $\bm{\pi}$ ($\kappa=1$) and unbalanced $\bm{\pi}$ ($\kappa>1$) respectively.

All settings indicate the linear convergence rate as shown in Theorem \ref{th:contraction_with_gamma}. As SNR grows, the parameter $\gamma$ in GS condition decreases and thus yields faster convergence rate. Comparing left two panels in Figure \ref{fig:snr}, increasing imbalance of cluster weights $\kappa$ slows down the local convergence rate as shown in Theorem \ref{th:contraction_with_gamma}.

\begin{figure}[H]
\centering
\begin{tabular}{cccc}
\hspace{-1em}
\includegraphics[width=0.22\textwidth, height=2.9 cm]{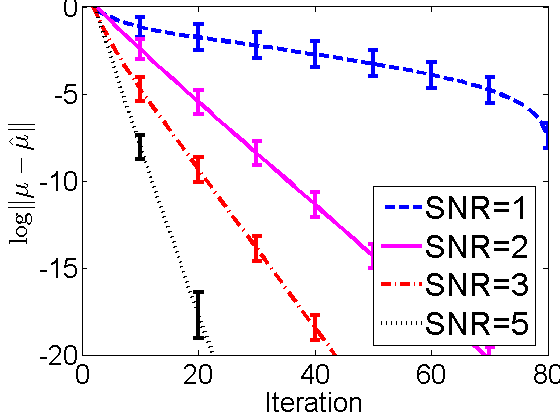}\hspace{-1em}&
\includegraphics[width=0.22\textwidth, height= 2.9 cm]{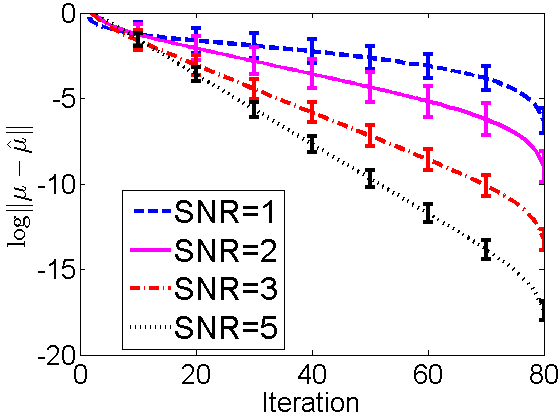}\hspace{-1em}&
\hspace{-1em}\includegraphics[width=0.27\textwidth]{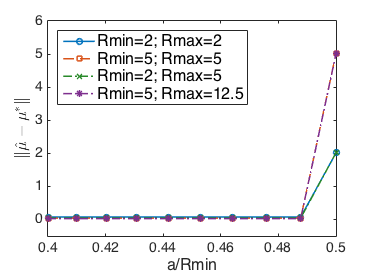}&
\hspace{-1em}\includegraphics[width=0.26\textwidth, height = 3.1cm]{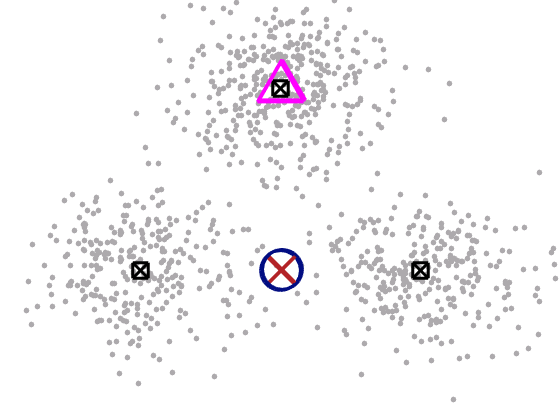}
\hspace{-2em}\\
(a) & (b) & (c)& (d)
\end{tabular}
\caption{(a, b): The influence of SNR on optimization error in different settings. The figures represent the influence of SNR when the GMMs have different cluster centers and weights: (a) $\bm{\pi}=(1/3,1/3,1/3)$. (b) $\bm{\pi}=(0.6,0.3,0.1)$.  (c) plots statistical error with different initializations arbitrarily close to the boundary of the contraction region. (d) shows the suboptimal stationary point when two centers are initialized from the midpoint of the respective true cluster centers.
}
\label{fig:snr}
\end{figure}

{\bf Contraction Region}
To show the tightness of the contraction region, we generate a mixture with $M=3, d=2$, and initialize the clusters as follows. We use $\bmu^0_2=\half{\bmu^*_2+\bmu^*_3}-\epsilon$, $\bmu^0_3=\half{\bmu^*_2+\bmu^*_3}+\epsilon$, for shrinking $\epsilon$, i.e. increasing $a/\rmin$ and plot the error on the Y axis. Figure \ref{fig:snr}-(c) shows that gradient EM converges when initialized arbitrarily close to the boundary, thus confirming our near optimal contraction region.  Figure \ref{fig:snr}-(d) shows that when $\epsilon=0$, i.e. $a=\half{\rmin}$, gradient EM can be trapped at a sub-optimal stationary point. 

\section{Concluding Remarks}
In this paper, we obtain local convergence rates and a near optimal contraction radius for population and sample-based gradient EM for GMM with general cluster number and weights. For our sample based analysis, we face new challenges because of structural differences from the two component equal weight setting, which are alleviated via the use of non-traditional tools like a vector valued contraction argument and martingale concentrations bounds where bounded differences hold only with high probability. 

\appendix
\section{Accompanying Lemmas}
In this subsection, we collect some lemmas on Gaussian distribution and basic properties of Gaussian mixture model. Most of them can be derived with fundamental analysis techniques.
The following lemma from \cite{vershynin2010introduction} bounds the covering number of a unit sphere.
\begin{lemma}[Lemma 5.2 \cite{vershynin2010introduction}]
Let $\cS^{n-1}$ be the unit Euclidean sphere equipped with Euclidean metric. Denote $\cN(\cS^{n-1},\epsilon)$ as the covering number with $\epsilon$-net, then
\bas{
\cN(\cS^{n-1},\epsilon) \le \left(1+\frac{2}{\epsilon}\right)^n
}
Specifically, when $\epsilon=1/2$, we have
\bas{
\cN(\cS^{n-1},\half{1})\le \exp(2n)
}
\label{lem:covering_number}
\end{lemma}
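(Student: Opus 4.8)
The plan is to use the standard volumetric packing–covering argument. First I would let $\Lambda \subset \cS^{n-1}$ be a \emph{maximal} $\epsilon$-separated subset, i.e.\ a maximal subset with $\norm{x-y}>\epsilon$ for all distinct $x,y\in\Lambda$ (such a set exists by compactness of the sphere, or by Zorn's lemma / a greedy construction). By maximality, every point of $\cS^{n-1}$ is within distance $\epsilon$ of some point of $\Lambda$ — otherwise that point could be added to $\Lambda$, contradicting maximality — so $\Lambda$ is an $\epsilon$-net for $\cS^{n-1}$ and hence $\cN(\cS^{n-1},\epsilon)\le|\Lambda|$. It therefore suffices to bound $|\Lambda|$.

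Next I would bound $|\Lambda|$ by comparing $n$-dimensional Lebesgue volumes in the ambient space $\bR^n$. Since the points of $\Lambda$ are pairwise more than $\epsilon$ apart, the Euclidean balls $B(x,\epsilon/2)$, $x\in\Lambda$, are pairwise disjoint. Moreover each is contained in $B(0,1+\epsilon/2)$, because for $z\in B(x,\epsilon/2)$ we have $\norm{z}\le\norm{x}+\norm{z-x}<1+\epsilon/2$ using $\norm{x}=1$. Writing $V_n(r)=r^nV_n(1)$ for the volume of a radius-$r$ ball in $\bR^n$, disjointness together with the containment gives
\bas{
|\Lambda|\cdot\left(\frac{\epsilon}{2}\right)^n V_n(1) = \sum_{x\in\Lambda}V_n(\epsilon/2) \le V_n\!\left(1+\frac{\epsilon}{2}\right) = \left(1+\frac{\epsilon}{2}\right)^n V_n(1).
}
Cancelling $V_n(1)$ and dividing by $(\epsilon/2)^n$ yields $|\Lambda|\le\left(1+\frac{2}{\epsilon}\right)^n$, which proves the first inequality. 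For the special case $\epsilon=\half{1}$, I would simply substitute: $\left(1+\frac{2}{\epsilon}\right)^n=5^n$, and since $5\le e^2$ this gives $\cN(\cS^{n-1},\half{1})\le 5^n\le e^{2n}=\exp(2n)$.

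I do not expect a genuine obstacle here; the argument is a textbook volumetric estimate. The only point requiring care is conceptual rather than computational: the volume comparison must be performed with the $n$-dimensional Lebesgue measure of the \emph{ambient} space $\bR^n$ (using solid balls centered at the net points), not with the $(n-1)$-dimensional surface measure of the sphere, and one exploits that a single maximal $\epsilon$-separated set serves double duty — as a packing (for the volume lower bound) and as a covering (for the net property). An entirely equivalent route would keep the net points on the sphere and intersect the small balls with a thin annulus, but the ambient-ball version above is cleaner and is what I would present.
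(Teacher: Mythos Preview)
Your argument is correct and is precisely the standard volumetric packing--covering proof. Note, however, that the paper does not actually prove this lemma: it is stated as Lemma~5.2 of \cite{vershynin2010introduction} and cited without proof, so there is no ``paper's own proof'' to compare against. Your write-up reproduces the argument from the cited reference essentially verbatim, so there is nothing to add.
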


The following lemma is useful while carrying out spherical coordinate transformation.
\begin{lemma}
\begin{itemize}
\item[(1)] The volume for a $d$-dimensional $r$-ball is $\frac{\pi^{\half{d}}}{\gamfun{\half{d}+1}}r^d$;
\item[(2)]
$\int_0^\pi \sin^k(x)dx = \frac{\sqrt{\pi}\Gamma(\frac{k+1}{2})}{\Gamma\left( \frac{k}{2}+1\right)}$, and
\bas{
 \int_{\theta_{d-1}=0}^{2\pi}\int_{\theta_{d-2}=0}^{\pi}\cdots \int_{\theta_1=0}^{\pi} \sin^{d-2}(\theta_1)\cdots \sin(\theta_{d-2}) d\theta_1\cdots d\theta_{d-1} =\frac{2\pi^{\frac{d}{2}}}{\Gamma\left(\frac{d}{2}\right)}
}
\item[(3)]
If $X\sim \cN(\bmu,\sigma^2I_d)$, then 
\bas{
\bE_X \|X-\bmu\|^p = 2^{\half{p}}\frac{\gamfun{\half{p+d}}}{\gamfun{\half{d}}}\sigma^p
}
\end{itemize}
\label{lem:normal_norm}
\end{lemma}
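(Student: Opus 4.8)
The final statement is Lemma \ref{lem:normal_norm}, which collects three standard facts: (1) the volume of a $d$-dimensional ball of radius $r$, (2) the integral of powers of sine and the resulting formula for the surface area of the unit sphere via spherical coordinates, and (3) the $p$-th absolute moment of the distance from the mean of an isotropic Gaussian. These are all classical, so the plan is simply to assemble them from elementary computations rather than invoke any deep machinery.

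\textbf{Part (1) and the sphere surface area.} The plan is to compute the ball volume $V_d(r) = \int_{\|x\|\le r} dx$ by passing to spherical coordinates, $dx = \rho^{d-1} d\rho \, d\omega$, where $d\omega$ is the surface measure on $\cS^{d-1}$. This factors as $V_d(r) = \left(\int_0^r \rho^{d-1} d\rho\right) \cdot \mathrm{Area}(\cS^{d-1}) = \frac{r^d}{d} \mathrm{Area}(\cS^{d-1})$, so (1) is equivalent to the surface-area claim $\mathrm{Area}(\cS^{d-1}) = \frac{2\pi^{d/2}}{\Gamma(d/2)}$ appearing at the end of (2). To get that, I would either (i) use the Gaussian integral trick: compute $\int_{\bR^d} e^{-\|x\|^2/2} dx = (2\pi)^{d/2}$ two ways, the second being $\mathrm{Area}(\cS^{d-1}) \int_0^\infty \rho^{d-1} e^{-\rho^2/2} d\rho = \mathrm{Area}(\cS^{d-1}) \cdot 2^{d/2-1}\Gamma(d/2)$ after the substitution $t = \rho^2/2$; solving gives the claim. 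Alternatively (ii), one iterates the sine integral: writing the spherical volume element in angular coordinates gives the product $\prod_{k=1}^{d-2}\int_0^\pi \sin^k\theta\, d\theta \cdot \int_0^{2\pi} d\theta_{d-1}$, and the identity $\int_0^\pi \sin^k(x)\,dx = \sqrt{\pi}\,\Gamma(\frac{k+1}{2})/\Gamma(\frac{k}{2}+1)$ (itself proved by the Beta-function substitution $u = \sin^2 x$, or by integration by parts plus induction) telescopes — consecutive $\Gamma$ factors cancel — leaving exactly $2\pi^{d/2}/\Gamma(d/2)$. Then $V_d(r) = \frac{r^d}{d}\cdot\frac{2\pi^{d/2}}{\Gamma(d/2)} = \frac{\pi^{d/2}}{\Gamma(d/2+1)}r^d$ using $d\cdot\Gamma(d/2) = 2\Gamma(d/2+1)$.

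\textbf{Part (3), the Gaussian moment.} By translation invariance it suffices to take $X \sim \cN(0,\sigma^2 I_d)$ and compute $\bE\|X\|^p$. I would write this as $\frac{1}{(2\pi\sigma^2)^{d/2}}\int_{\bR^d}\|x\|^p e^{-\|x\|^2/(2\sigma^2)}dx$, pass to spherical coordinates to pull out $\mathrm{Area}(\cS^{d-1}) = 2\pi^{d/2}/\Gamma(d/2)$, and reduce to the one-dimensional integral $\int_0^\infty \rho^{p+d-1} e^{-\rho^2/(2\sigma^2)} d\rho$. The substitution $t = \rho^2/(2\sigma^2)$ turns this into $\frac12 (2\sigma^2)^{(p+d)/2}\Gamma\big(\frac{p+d}{2}\big)$. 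Collecting constants, the $\pi^{d/2}$ and most powers of $2$ cancel against the normalization, leaving $\bE\|X\|^p = 2^{p/2}\,\frac{\Gamma((p+d)/2)}{\Gamma(d/2)}\,\sigma^p$, as claimed.

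\textbf{Obstacle.} There is no real obstacle here — every step is a textbook change of variables plus basic Gamma-function identities ($\Gamma(z+1) = z\Gamma(z)$, the Beta integral, the Gaussian normalization constant). The only point demanding mild care is the telescoping in approach (ii) for the sphere area: one must track the $\Gamma$ arguments across the product of sine integrals and verify the cancellation pattern, including the $\int_0^{2\pi}d\theta_{d-1} = 2\pi$ factor that supplies the final numerator. If one instead uses the Gaussian-integral route (i) for the surface area, even that bookkeeping disappears, and the whole lemma follows from two applications of the substitution $t = \rho^2/(2\sigma^2)$. I would present it that way for brevity, then note that the sine-integral identity in (2) is an independent elementary computation needed elsewhere in the appendix.
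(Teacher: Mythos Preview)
Your proposal is correct and matches the paper's approach: the paper dismisses (1) and (2) as ``elementary integration'' and proves (3) exactly as you do, via the spherical coordinate transformation and the substitution reducing the radial integral to a Gamma function. Your write-up simply fills in the details the paper omits.
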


\begin{proof}
(1, 2) can be proven by elementary integration. Now we prove (3).
By spherical coordinate transformation,
\bas{
\bE_X \|X-\bmu\|^p =& (2\pi\sigma^2)^{-\frac{d}{2}}\int_{u=0}^\infty u^{p+d-1}e^{-\frac{u^2}{2\sigma^2}}du \frac{2\pi^{\frac{d}{2}}}{\Gamma\left(\frac{d}{2}\right)}
= 2^{\half{p}}\frac{\gamfun{\half{p+d}}}{\gamfun{\half{d}}}\sigma^p
}
\end{proof}

\begin{lemma}[Gamma tail bound \cite{boucheron2013concentration}]
If $X\sim \text{Gamma}(v,c)$, then $P(X>\sqrt{2vt}+ct)\le e^{-t}$.
Or equivalently,
\bas{
P(X>t)\le \exp\left( -\frac{v}{c^2}\left( 1+\frac{ct}{v}-\sqrt{1+\frac{2ct}{v}} \right) \right)
}
In particular, if $\frac{ct}{v}\ge 4$,
\bas{
P(X>t)\le \exp\left( -\frac{v}{c^2}\sqrt{\frac{ct}{v}} \right) = \exp\left( -\sqrt{\frac{vt}{c^3}} \right)
}
\label{lem:gamma_tail}
\end{lemma}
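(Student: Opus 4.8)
The plan is to prove the bound by the Cram\'er--Chernoff method and then invert the resulting rate function into the two stated forms. First I would record the moment generating function estimate: for $X\sim\text{Gamma}(v,c)$ one has $\log \bE e^{\lambda X}\le \psi(\lambda):=\frac{v\lambda^{2}}{2(1-c\lambda)}$ for every $0<\lambda<1/c$. If $\text{Gamma}(v,c)$ denotes a genuine Gamma variable centered at its mean, this follows from the elementary inequality $-\log(1-x)-x\le \frac{x^{2}}{2(1-x)}$ on $[0,1)$ applied with $x=c\lambda$; if it denotes a sub-gamma variable with variance factor $v$ and scale $c$, it is the definition. Markov's inequality applied to $e^{\lambda X}$ then gives $P(X>t)\le \exp(-\psi^{*}(t))$, where $\psi^{*}(t)=\sup_{0<\lambda<1/c}\bigl(\lambda t-\psi(\lambda)\bigr)$ is the Legendre transform of $\psi$.

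The second step is to compute $\psi^{*}$ in closed form. Solving $\frac{d}{d\lambda}\bigl(\lambda t-\psi(\lambda)\bigr)=0$ on $(0,1/c)$ gives the maximizer $\lambda^{*}=\frac{1}{c}\bigl(1-(1+2ct/v)^{-1/2}\bigr)$; introducing $w:=\sqrt{1+2ct/v}$, so that $1-c\lambda^{*}=1/w$ and $ct/v=(w^{2}-1)/2$, and simplifying yields $\psi^{*}(t)=\frac{v(w-1)^{2}}{2c^{2}}=\frac{v}{c^{2}}\bigl(1+\frac{ct}{v}-\sqrt{1+\frac{2ct}{v}}\bigr)$, which is exactly the second displayed inequality. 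Writing $h(u):=1+u-\sqrt{1+2u}$, this reads $\psi^{*}(t)=\frac{v}{c^{2}}h\!\bigl(\frac{ct}{v}\bigr)$.

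For the first displayed bound I would invert the relation between threshold and exponent. The substitution $w=\sqrt{1+2u}$ shows $h(u)=\tfrac12(w-1)^{2}$, so $h$ is strictly increasing with inverse $h^{-1}(y)=y+\sqrt{2y}$; tracing the scalings back gives that $\psi^{*}(s)=t$ holds precisely when $s=ct+\sqrt{2vt}$. Since $\psi^{*}$ is nondecreasing, $P\bigl(X>\sqrt{2vt}+ct\bigr)\le \exp\bigl(-\psi^{*}(\sqrt{2vt}+ct)\bigr)=e^{-t}$. For the ``in particular'' clause I would show $h(u)\ge\sqrt{u}$ whenever $u\ge4$: the claim is $1+u-\sqrt{u}\ge\sqrt{1+2u}$, and since the left side is positive one may square both sides; writing $z=\sqrt{u}$, the resulting polynomial inequality reduces to $z(z-2)(z^{2}+1)\ge0$, i.e.\ $z\ge2$. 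Hence for $ct/v\ge4$ one gets $\psi^{*}(t)\ge\frac{v}{c^{2}}\sqrt{ct/v}=\sqrt{vt/c^{3}}$, yielding the last bound.

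The argument is routine concentration of measure, so I do not anticipate a genuine obstacle. The only points that need care are fixing the meaning of $\text{Gamma}(v,c)$ so that the sub-gamma estimate $\psi$ is available, and the two small computations — the explicit evaluation of $\psi^{*}$ and the inversion $h^{-1}(y)=y+\sqrt{2y}$ — both of which become one-liners once the substitution $w=\sqrt{1+2u}$ is in hand. I would regard the closed-form computation of $\psi^{*}$ as the ``main'' step, with everything else following from its monotonicity.
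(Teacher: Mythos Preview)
Your proposal is correct and complete. Note, however, that the paper does not actually supply a proof of this lemma: it is stated with a citation to \cite{boucheron2013concentration} and used as a black box. What you have written is essentially the standard Cram\'er--Chernoff derivation from that reference --- the sub-gamma mgf bound $\psi(\lambda)=\frac{v\lambda^{2}}{2(1-c\lambda)}$, the closed-form Legendre transform $\psi^{*}(t)=\frac{v}{c^{2}}\bigl(1+\frac{ct}{v}-\sqrt{1+2ct/v}\bigr)$, and the inversion $h^{-1}(y)=y+\sqrt{2y}$ via $w=\sqrt{1+2u}$ --- together with a clean verification of the ``in particular'' clause through the factorization $z(z-2)(z^{2}+1)\ge 0$. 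The only caveat you already flagged yourself: the first displayed inequality is stated for $X$ rather than $X-\bE X$, so one must read $\text{Gamma}(v,c)$ as ``sub-gamma on the right tail with variance factor $v$ and scale $c$'' (or equivalently work with the centered variable), which is consistent with how the lemma is applied downstream in the paper.
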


\begin{lemma}
For $\forall d>0$, if $r \ge 2\sqrt{d+1}$, then
\bas{
\int_r^\infty u^d e^{-\frac{u^2}{2}}du \le  2^{\frac{d-1}{2}}\gamfun{\frac{d+1}{2}} \exp\left( -\frac{r}{2}\sqrt{d+1} \right)
}
For $p\in \{0,1,2\}$, when $r\ge 2\sqrt{d+p}$,
\bas{
\int_r^\infty (u+x)^{p}u^{d-1} e^{-\frac{u^2}{2}}du \le  2^{\frac{d}{2}-1}\gamfun{\frac{d}{2}}(x+d)^p \exp\left( -\frac{r}{2}\sqrt{d} \right)
}
\label{lem:pol_exp_tail_bound}
\end{lemma}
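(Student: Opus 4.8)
The plan is to reduce the first estimate to the Gamma tail bound of Lemma~\ref{lem:gamma_tail} through the substitution $v=u^2/2$, and then to derive the second estimate by expanding $(u+x)^p$ binomially and applying the first estimate to each monomial in $u$.

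First I would substitute $v=u^2/2$ in $\int_r^\infty u^d e^{-u^2/2}\,du$. Since $u^d\,du = 2^{\frac{d-1}{2}}v^{\frac{d-1}{2}}\,dv$ and the lower limit $r$ maps to $r^2/2$, this gives
\bas{
\int_r^\infty u^d e^{-\frac{u^2}{2}}\,du = 2^{\frac{d-1}{2}}\int_{r^2/2}^\infty v^{\frac{d-1}{2}}e^{-v}\,dv = 2^{\frac{d-1}{2}}\gamfun{\half{d+1}}\,P\!\left(W>\frac{r^2}{2}\right),
}
where $W\sim\text{Gamma}\!\left(\half{d+1},1\right)$. Then I would invoke Lemma~\ref{lem:gamma_tail} with shape parameter $v=\half{d+1}$, scale $c=1$ and $t=r^2/2$: the hypothesis $ct/v\ge 4$ is exactly $r\ge 2\sqrt{d+1}$, and under it $P(W>t)\le\exp\!\left(-\sqrt{vt}\right)=\exp\!\left(-\frac{r}{2}\sqrt{d+1}\right)$. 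Substituting this back gives the first claim.

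For the second inequality I would write $(u+x)^p=\sum_{j=0}^p\binom{p}{j}x^{p-j}u^j$, so that
\bas{
\int_r^\infty (u+x)^p u^{d-1}e^{-\frac{u^2}{2}}\,du = \sum_{j=0}^p\binom{p}{j}x^{p-j}\int_r^\infty u^{d-1+j}e^{-\frac{u^2}{2}}\,du.
}
For each $0\le j\le p$ we have $r\ge 2\sqrt{d+p}\ge 2\sqrt{(d-1+j)+1}$, so the first inequality (with $d$ replaced by $d-1+j$) bounds the $j$-th integral by $2^{\frac{d-2+j}{2}}\gamfun{\half{d+j}}\exp\!\left(-\frac{r}{2}\sqrt{d+j}\right)$. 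I would then simplify with the recursion $\gamfun{a+1}=a\gamfun{a}$, which expresses $\gamfun{\half{d+j}}$ in terms of $\gamfun{\half d}$ exactly when $j$ is even and, for odd $j$, together with the log-convexity estimate $\gamfun{\half{d+1}}\le\sqrt{d/2}\,\gamfun{\half d}$; combined with the monotonicity $\exp\!\left(-\frac r2\sqrt{d+j}\right)\le\exp\!\left(-\frac r2\sqrt d\right)$, this makes the $j$-th summand at most $\binom{p}{j}x^{p-j}d^j\cdot 2^{\frac d2-1}\gamfun{\half d}\exp\!\left(-\frac r2\sqrt d\right)$ for $p\in\{0,1,2\}$ and $d\ge 1$. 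Summing over $j$ and using $\sum_j\binom{p}{j}x^{p-j}d^j=(x+d)^p$ reconstructs the right-hand side $2^{\frac d2-1}\gamfun{\half d}(x+d)^p\exp\!\left(-\frac r2\sqrt d\right)$.

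The substitution and the application of Lemma~\ref{lem:gamma_tail} are mechanical, so the main obstacle I anticipate is the bookkeeping in the second part: checking that after the Gamma recursion and the log-convexity bound, the numerical coefficient of each monomial $x^{p-j}$ is indeed dominated by $\binom{p}{j}d^j$ times the common factor $2^{\frac d2-1}\gamfun{\half d}$ --- concretely using $\binom{2}{0}=\binom{2}{2}=1$, $\binom{2}{1}=2$, $\gamfun{\half{d+2}}=\half d\,\gamfun{\half d}$, and $\sqrt 2\,\gamfun{\half{d+1}}\le\sqrt d\,\gamfun{\half d}$ --- so that nothing is lost when the monomials are recombined into $(x+d)^p$. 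For $0<d<1$ one would additionally exploit the strict slack between $\exp\!\left(-\frac r2\sqrt{d+j}\right)$ and $\exp\!\left(-\frac r2\sqrt d\right)$, available because $r\ge 2\sqrt{d+p}$.
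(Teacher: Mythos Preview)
Your proposal is correct and follows essentially the same route as the paper: the substitution $v=u^2/2$ combined with the Gamma tail of Lemma~\ref{lem:gamma_tail} for the first inequality, and binomial expansion of $(u+x)^p$ with term-by-term application of the first inequality for the second. The only cosmetic difference is that the paper handles $p=1,2$ by explicit case computation and uses the cruder monotonicity bound $\gamfun{\half{d+1}}<\gamfun{\half d+1}=\half d\,\gamfun{\half d}$ in place of your log-convexity estimate $\gamfun{\half{d+1}}\le\sqrt{d/2}\,\gamfun{\half d}$; both lead to the same final form after absorbing constants into $(x+d)^p$.
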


\begin{proof}
By changing of variables $v=\half{u^2}$ and integration by parts, we have 
\bas{
\int_r^\infty u^d e^{-\frac{u^2}{2}}du =& 2^{\frac{d-1}{2}}\int_{\half{r^2}}^\infty v^{\frac{d-1}{2}} e^{-v}dv\\
=& 2^{\frac{d-1}{2}}\gamfun{\frac{d+1}{2}} P(V>\half{r^2})
}
where $V\sim \text{Gamma}(\frac{d+1}{2},1)$. By Lemma \ref{lem:gamma_tail}, if $r^2\ge 4(1+d)$, 
\bas{
P\left(V> \half{r^2}\right) \le \exp\left( -\frac{r}{2}\sqrt{d+1} \right)
}
Hence we have the first inequality. For the second, when $p=0$, it follows directly from first part. When $p=1$,
\bas{
\int_r^\infty (u+x)^{p}u^{d-1} e^{-\frac{u^2}{2}}du =& 
\int_r^\infty u^d e^{-\frac{u^2}{2}}du +x\int_r^\infty u^{d-1} e^{-\frac{u^2}{2}}du \\
\le & 2^{\half{d-1}}\gamfun{\half{d+1}}\exp\left(-\half{r}\sqrt{d+1}\right)+x2^{\half{d}-1}\gamfun{\half{d}}\exp\left(-\half{r}\sqrt{d}\right)\\
\le & 2^{\frac{d}{2}-1}\gamfun{\frac{d}{2}}(x+d) \exp\left( -\frac{r}{2}\sqrt{d} \right)
}
where we use $\gamfun{\half{d+1}}<\gamfun{\half{d}+1}=\half{d}\gamfun{\half{d}}$, and $\exp\left(-\half{r}\sqrt{d+1}\right)<\exp\left(-\half{r}\sqrt{d}\right)$ in the last step.

When $p=2$,
\bas{
&\int_r^\infty (u+x)^2 u^{d-1} e^{-\frac{u^2}{2}}du =
\int_r^\infty u^{d+1} e^{-\frac{u^2}{2}}du+2x\int_r^\infty u^{d} e^{-\frac{u^2}{2}}du \\
&+x^2\int_r^\infty u^{d-1} e^{-\frac{u^2}{2}}du \\
\le & 2^{\half{d}}\gamfun{\half{d}+1}\exp\left(-\half{r}\sqrt{d+2}\right) + 2x\cdot 2^{\half{d-1}}\gamfun{\half{d+1}}\exp\left(-\half{r}\sqrt{d+1}\right)+x^2 2^{\half{d}-1}\gamfun{\half{d}}\exp\left(-\half{r}\sqrt{d}\right)\\
\le & (d+\sqrt{2}dx+x^2) 2^{\frac{d}{2}-1}\gamfun{\frac{d}{2}} \exp\left( -\frac{r}{2}\sqrt{d} \right)\\
\le & (x+d)^2 2^{\frac{d}{2}-1}\gamfun{\frac{d}{2}} \exp\left( -\frac{r}{2}\sqrt{d} \right)
}
\end{proof}

Using Lemma~\ref{lem:pol_exp_tail_bound}, we can get an easy to use tail bound for Euclidean norm of a Gaussian vector.
\begin{lemma}
If $X\sim \cN(0,I_d)$, for $r\ge 2\sqrt{d}$, we have 
\bas{
P(\|X\| \ge r) \le \exp(-\frac{r\sqrt{d}}{2})
}
\label{lem:norm_of_gaussian}
\end{lemma}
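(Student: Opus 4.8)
The plan is to apply the polynomial–exponential tail bound from Lemma~\ref{lem:pol_exp_tail_bound} with $p=0$ to the spherical-coordinate representation of $P(\|X\|\ge r)$, exactly mirroring the computation in Lemma~\ref{lem:normal_norm}. First I would write, for $X\sim\cN(0,I_d)$,
\bas{
P(\|X\|\ge r) = (2\pi)^{-d/2}\int_{u=r}^{\infty} u^{d-1} e^{-u^2/2}\,du\;\cdot\;\frac{2\pi^{d/2}}{\gamfun{d/2}} = \frac{1}{2^{d/2-1}\gamfun{d/2}}\int_{r}^{\infty} u^{d-1} e^{-u^2/2}\,du,
}
using part (2) of Lemma~\ref{lem:normal_norm} to evaluate the angular integral. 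This reduces the claim to controlling the one-dimensional radial integral.

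Next I would invoke Lemma~\ref{lem:pol_exp_tail_bound} in the $p=0$ case (with $d$ there replaced by $d-1$, so that $u^{d-1}=u^{(d-1)+1-1}$ matches the stated form, or equivalently directly from the first displayed inequality of that lemma after shifting the exponent). The hypothesis $r\ge 2\sqrt{d}$ implies $r\ge 2\sqrt{(d-1)+1}=2\sqrt{d}$, so the condition of Lemma~\ref{lem:pol_exp_tail_bound} is met, and it gives
\bas{
\int_{r}^{\infty} u^{d-1} e^{-u^2/2}\,du \;\le\; 2^{d/2-1}\gamfun{d/2}\,\exp\!\left(-\frac{r}{2}\sqrt{d}\right).
}
Substituting this into the expression for $P(\|X\|\ge r)$, the constant $2^{d/2-1}\gamfun{d/2}$ cancels exactly, leaving $P(\|X\|\ge r)\le \exp(-r\sqrt{d}/2)$, which is the claim.

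There is essentially no obstacle here: this lemma is a direct corollary of Lemma~\ref{lem:pol_exp_tail_bound}, and the only mild bookkeeping point is matching the exponent of $u$ in the radial integrand to the form in which Lemma~\ref{lem:pol_exp_tail_bound} is stated (handling the off-by-one between $u^{d-1}$ in the $\cN(0,I_d)$ density and the $u^d$ appearing in the lemma's first inequality), together with checking that $2\sqrt{d}$ suffices for the threshold condition. Both are immediate. One should just be slightly careful that the bound $\gamfun{(d+1)/2}\le \gamfun{d/2+1}$-type inequalities used inside Lemma~\ref{lem:pol_exp_tail_bound} are consistent with the clean cancellation claimed above; since for $p=0$ no such auxiliary inequality is needed, the cancellation is exact.
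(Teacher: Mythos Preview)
Your proposal is correct and follows the same argument as the paper: write $P(\|X\|\ge r)$ via the spherical-coordinate transformation and then apply Lemma~\ref{lem:pol_exp_tail_bound} (the $p=0$ case, whose hypothesis $r\ge 2\sqrt{d}$ matches exactly) so that the constant $2^{d/2-1}\gamfun{d/2}$ cancels. The paper's proof is the same one-line application, just without your extra bookkeeping commentary about matching the exponent.
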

\begin{proof}
By spherical coordinate transformation,
\bas{
P(\|X\|\ge r) =& \int (2\pi)^{-d/2} \exp(-\|x\|^2/2) dx\\
=& (2\pi)^{-d/2} \frac{2\pi^{d/2}}{\gamfun{\half{d}}} \int_r^\infty r^{d-1}e^{-r^2/2}dr\\
\le & \exp\left(-\half{r}\sqrt{d}\right)
}
\end{proof}

\begin{lemma}
If $X\sim \text{GMM}(\pi,\bmu^*,\sigma^2 I_d)$, then $X$ is a sub-gaussian random vector with sub-gaussian norm $\sigma+\sum_{i=1}^M \pi_i \|\bmu_i^*\|$.
\label{lem:mixture_sub_gaussian}
\end{lemma}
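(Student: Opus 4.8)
The plan is to show that the moment generating function of $\langle X, v\rangle$ for any unit vector $v$ admits a sub-gaussian bound with the claimed parameter. First I would write $X = \bmu_Z^* + G$ where $Z$ is the (latent) component index with $P(Z=i) = \pi_i$ and, conditionally on $Z=i$, $G \sim \cN(0, \sigma^2 I_d)$ is independent of $Z$. Since the sub-gaussian norm of a random vector is $\sup_{\|v\|=1}\|\langle X, v\rangle\|_{\psi_2}$, it suffices to bound $\|\langle X, v\rangle\|_{\psi_2}$ uniformly in $v$. Writing $\langle X, v\rangle = \langle \bmu_Z^*, v\rangle + \langle G, v\rangle$, I would use the triangle inequality for the $\psi_2$-norm (or Orlicz norm) to split this into two pieces.

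Next I would bound each piece. The Gaussian part $\langle G, v\rangle \sim \cN(0,\sigma^2)$ has $\psi_2$-norm a constant multiple of $\sigma$; depending on the exact normalization of $\|\cdot\|_{\psi_2}$ used in the paper, one can arrange the constant to be $1$, giving contribution $\sigma$. For the discrete part $\langle \bmu_Z^*, v\rangle$, this is a bounded random variable: $|\langle \bmu_Z^*, v\rangle| \le \max_i \|\bmu_i^*\| \le \sum_i \pi_i \|\bmu_i^*\|$ is false in general, so instead I would note that a mean-zero bounded random variable $Y$ with $|Y|\le b$ has $\|Y\|_{\psi_2} \lesssim b$, and here $\bE\langle \bmu_Z^*, v\rangle = \langle \sum_i \pi_i \bmu_i^*, v\rangle = 0$ by the centering assumption $\bE X = 0$. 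The cleanest route is actually to bound $\|\langle \bmu_Z^*, v\rangle\|_{\psi_2}$ directly by $\sum_i \pi_i\|\bmu_i^*\|$ using convexity: conditionally the variable takes value $\langle \bmu_i^*,v\rangle$, and one can dominate the Orlicz norm of the mixture by the $\pi_i$-weighted combination of the (deterministic, hence trivially sub-gaussian with norm $|\langle\bmu_i^*,v\rangle|\le\|\bmu_i^*\|$) conditional variables. Adding the two contributions via the triangle inequality gives $\|\langle X,v\rangle\|_{\psi_2}\le \sigma + \sum_i \pi_i\|\bmu_i^*\|$, and taking the supremum over $v$ finishes the proof.

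I expect the main obstacle to be purely bookkeeping around the definition of the sub-gaussian norm: different sources normalize $\|\cdot\|_{\psi_2}$ differently (via $\bE\exp(X^2/t^2)\le 2$, via MGF bounds, or via tail bounds), and the triangle-inequality and convexity steps need the Orlicz-norm formulation to be clean. One must also be slightly careful that "sub-gaussian norm of a random vector" is defined as the supremum over unit directions of the scalar sub-gaussian norms; with that definition the argument is direct. No concentration machinery beyond elementary Orlicz-norm manipulations is needed, so the lemma is essentially a sanity check that the mixture inherits sub-gaussianity with the expected parameter, and its role downstream is to feed into the vector-contraction and McDiarmid-type arguments where sub-gaussianity of $X$ is assumed.
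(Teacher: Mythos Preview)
Your route differs somewhat from the paper's: you decompose $X=\bmu_Z^*+G$ and invoke the triangle inequality for the $\psi_2$-norm, whereas the paper works directly with the moment characterization $\|Y\|_{\psi_2}=\sup_{p\ge 1}p^{-1/2}(\bE|Y|^p)^{1/p}$, conditions on the latent label $Z$ to write $\bE|X_u|^p=\sum_i\pi_i\,\bE[|X_u|^p\mid Z=i]$, and then applies Minkowski's inequality inside each component before taking the supremum over $p$. Both strategies arrive at the same separation into a Gaussian contribution $\sigma$ and a ``mean'' contribution.

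The genuine gap in your proposal is the handling of the discrete piece. You assert that the $\psi_2$-norm of $\langle\bmu_Z^*,v\rangle$ is dominated by the $\pi$-weighted average $\sum_i\pi_i\|\bmu_i^*\|$ ``using convexity,'' but Orlicz norms do not obey such a mixture bound: if $Y$ equals $c$ with probability $\pi$ and $0$ otherwise, then $p^{-1/2}(\bE|Y|^p)^{1/p}=p^{-1/2}\pi^{1/p}|c|$, and maximizing over $p$ for small $\pi$ gives something far larger than $\pi|c|$ (e.g.\ $\pi=10^{-2}$, $c=10$ yields a supremum near $2$, versus $\pi|c|=0.1$). What boundedness gives you directly is $\max_i\|\bmu_i^*\|$, not the weighted average. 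It is worth noting that the paper's own proof makes the parallel move $(\sum_i\pi_i b_i)^{1/p}\le\sum_i\pi_i b_i^{1/p}$, which suffers from the same defect (subadditivity of $x\mapsto x^{1/p}$ only yields $\sum_i\pi_i^{1/p}b_i^{1/p}$, and $\pi_i^{1/p}\ge\pi_i$). So the difficulty is not with your strategy but with the precise constant $\sum_i\pi_i\|\bmu_i^*\|$ claimed in the lemma; both arguments do establish sub-gaussianity with parameter $O(\sigma+\max_i\|\bmu_i^*\|)$, which is what is actually used downstream.
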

\begin{proof}

For any unit vector $u$, consider the random variable $X_u = \ip{X,u}$. By the definition in \cite{vershynin2010introduction}, it suffices to show that $X_u$ has a sub-gaussian norm upper bounded by $\sigma+\sum_{i=1}^M \pi_i \|\bmu_i^*\|$.
\bas{
\|X_u\|_{\phi_2} = \sup_{p\ge 1} (\bE |X_u|^p)^{1/p}
}
For any $p\ge 1$, let $Z$ be the latent variable in the mixture model, we have 
\bas{
p^{-1/2}\left( \bE |X_u|^p\right)^{1/p} =& p^{-1/2}\left(\sum_{i=1}^M \bE [|X_u|^p|Z=i]\cdot P(Z=i)\right)^{1/p}\\
\le & p^{-1/2}\sum_{i=1}^M \pi_i \left(\bE [|X_u|^p|Z=i]\right)^{1/p}\\
\stackrel{(i)}{\le} & p^{-1/2} \sum_{i=1}^M \pi_i \left(\bE [ |X_u-\bmu_i^*|^p|Z=i]^{1/p}+\|\bmu_i^*\|\right) \\
\le & p^{-1/2}\left( \sum_{i=1}^M \pi_i p^{1/2}\sigma + \|\bmu_i^*\|\right) \le \sigma  +\sum_{i=1}^M \pi_i \|\bmu_i^*\|
}
where $(i)$ follows from Minkovski's inequality.
\end{proof}

The following lemma characterize the relation between $\|\bmu^*_{\max}\|$ and $\rmax$.
\begin{lemma}
If $X\sim \text{GMM}(\pi,\bmu^*,\sigma^2 I_d)$ with $\bE X=0$, let $\|\bmu^*_{\max}\| = \max_i \|\bmu_i^*\|$, then 
\bas{
\|\bmu^*_{\max}\|\le \rmax \le 2\|\bmu^*_{\max}\|
}
\label{lem:rmax_mumax}
\end{lemma}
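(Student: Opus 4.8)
The statement splits into two inequalities, and neither requires any probabilistic content beyond the mean constraint $\bE X = \sum_i \pi_i \bmu_i^* = 0$. The plan is to dispatch the upper bound by a one-line triangle inequality and the lower bound by exploiting the zero-mean condition.

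For the upper bound $\rmax \le 2\|\bmu^*_{\max}\|$, I would simply note that for any $i \ne j$,
\bas{
\|\bmu_i^* - \bmu_j^*\| \le \|\bmu_i^*\| + \|\bmu_j^*\| \le 2\|\bmu^*_{\max}\|,
}
and take the maximum over $i \ne j$.

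For the lower bound $\|\bmu^*_{\max}\| \le \rmax$, let $k$ be an index achieving $\|\bmu_k^*\| = \|\bmu^*_{\max}\|$. Using $\sum_i \pi_i \bmu_i^* = 0$, write $\bmu_k^* = \bmu_k^* - \sum_i \pi_i \bmu_i^* = \sum_i \pi_i(\bmu_k^* - \bmu_i^*)$, so that
\bas{
\|\bmu_k^*\| \le \sum_i \pi_i \|\bmu_k^* - \bmu_i^*\| = \sum_{i \ne k} \pi_i \|\bmu_k^* - \bmu_i^*\| \le \rmax \sum_{i \ne k} \pi_i = (1 - \pi_k)\rmax \le \rmax,
}
where the $i = k$ term vanishes and each $\|\bmu_k^* - \bmu_i^*\| \le \rmax$ by definition. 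Combining the two displays gives the claim.

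There is no real obstacle here; the only point worth flagging is that the lower bound genuinely needs the centering assumption $\bE X = 0$ (without it, all centers could be translated far from the origin while keeping $\rmax$ fixed), whereas the upper bound holds unconditionally.
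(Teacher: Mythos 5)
Your proof is correct. The upper bound $\rmax \le 2\|\bmu^*_{\max}\|$ is exactly the paper's argument (triangle inequality on the pair realizing $\rmax$). For the lower bound the paper argues by contradiction: if $\|\bmu^*_{\max}\| > \rmax$ then all centers lie in the ball of radius $\rmax$ around the maximal center, while the origin lies outside it, contradicting $\bE X=\sum_i\pi_i\bmu_i^*=0$ since the zero mean is a convex combination of the centers. You prove the same fact directly by writing $\bmu_k^*=\sum_i\pi_i(\bmu_k^*-\bmu_i^*)$ and applying the triangle inequality, which rests on the identical observation (the origin is in the convex hull of the centers) but avoids the contradiction framing and even yields the slightly sharper bound $\|\bmu^*_{\max}\|\le(1-\pi_k)\rmax$. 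Your closing remark that the centering assumption is needed only for the lower bound is accurate; both arguments implicitly assume $M\ge 2$ so that $\rmax$ is well defined, as does the paper.
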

\begin{proof}
We first prove $\|\bmu^*_{\max}\|\le \rmax $ by contradiction. Assume $\|\bmu^*_{\max}\| > \rmax$, by definition of $\rmax$, all the cluster centers lies in the ball $\bB(\|\bmu^*_{\max}\|,\rmax)$, but the origin is outside of the ball, which contradicts the fact that $\bE X=\sum_i \pi_i \bmu_i^*=0$.

The second inequality follows from triangle inequality, assume $\rmax$ is achieved at $R_{ij}$, then
\bas{
\rmax \le \|\bmu_i^*\|+\|\bmu_j^*\| \le 2\|\bmu^*_{\max}\|.
}
\end{proof}
\begin{lemma}
A function $f:\mathbb{R}^n \to \mathbb{R}$ is $\sqrt{n}L$ Lipschitz if there exists a constant $L$ such that the restriction of $f$ on a certain coordinate is $L$-Lipschitz.
\label{lem:vec_Lipschitz}
\end{lemma}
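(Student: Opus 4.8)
The plan is to run a standard hybrid (telescoping) argument that moves from one point to another one coordinate at a time, exploiting the hypothesis that $f$ restricted to any single coordinate direction (with the other $n-1$ coordinates held fixed) is $L$-Lipschitz.

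First I would fix $x,y\in\mathbb{R}^n$ and introduce the interpolating points $z^{(0)},z^{(1)},\dots,z^{(n)}$ defined by $z^{(0)}=x$ and, for $k\in[n]$, $z^{(k)}$ equal to $z^{(k-1)}$ in every coordinate except the $k$-th, where it takes the value $y_k$; thus $z^{(n)}=y$ and consecutive points differ in exactly one coordinate. Applying the one-coordinate Lipschitz hypothesis along the $k$-th coordinate gives $|f(z^{(k-1)})-f(z^{(k)})|\le L\,|x_k-y_k|$. Summing over $k$ and using the triangle inequality yields
\bas{
|f(x)-f(y)|\le \sum_{k=1}^n |f(z^{(k-1)})-f(z^{(k)})| \le L\sum_{k=1}^n|x_k-y_k| = L\,\norm{x-y}_1 .
}
Finally, Cauchy--Schwarz gives $\norm{x-y}_1\le\sqrt{n}\,\norm{x-y}_2$, hence $|f(x)-f(y)|\le \sqrt{n}\,L\,\norm{x-y}_2$, which is the assertion.

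There is no real obstacle here; the only point to be careful about is that the coordinate-wise Lipschitz hypothesis must hold uniformly over the values of the remaining coordinates, so that it can be invoked at each intermediate point $z^{(k-1)}$ (whose other coordinates are a mixture of those of $x$ and $y$). As an alternative route, if one is willing to assume $f\in C^1$, one can instead bound $\norm{\nabla f(x)}_2^2=\sum_i |\partial_i f(x)|^2\le nL^2$ and integrate along the segment from $x$ to $y$; but the hybrid argument above requires no smoothness and is the cleaner choice.
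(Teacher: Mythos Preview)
Your proposal is correct and follows essentially the same approach as the paper: both telescope $f(x)-f(y)$ through a chain of hybrid points that differ in one coordinate at a time, apply the coordinate-wise $L$-Lipschitz bound at each step to obtain $\sum_i L|x_i-y_i|$, and then pass from the $\ell_1$ to the $\ell_2$ norm via Cauchy--Schwarz to pick up the $\sqrt{n}$ factor. Your remark that the per-coordinate Lipschitz constant must hold uniformly over the remaining coordinates is exactly the implicit assumption the paper is using.
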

\begin{proof}
We first relax the norm of difference via a chain of triangle inequalities where each pair of terms only vary on one dimension.
\bas{
& |f(x_1,x_2,\cdots, x_n) - f(y_1,y_2, \cdots, x_n)| \\
 \leq &  \sum_{i=1}^n |f(y_1,y_2,\cdots, y_{i-1},x_i,x_{i+1}, \cdots, x_n) - f(y_1,y_2,\cdots, y_{i-1},y_i, x_{i+1}, \cdots, x_n)| \\
\leq &  \sum_{i=1}^n L |x_i - y_i | \leq \sqrt{n}L\norm{x-y}
}

\end{proof}

\section{Proofs in Section~\ref{sec:local}}
\label{app:proof_popu}

\begin{proof}[Proof of Lemma~\ref{lem:grad_q}]
By \eqref{eq:grad_Q}, $\nabla_{\bmu_i} q(\bmu) = \bE_X w_i(X;\bmu^*)(X-\bmu_i)$. Without loss of generality, we only show the claim for $i=1$. That is equivalent of saying, if $X\sim \text{GMM}(\pi,\bmu^*)$, we have 
$ \bE [w_1(X;\bmu^*)(X-\bmu_1^*)] = 0$.
Denote $\cN(\bmu_i^*, \Sigma)$ as $\cN_i$ and its distribution as $\phi_i(X)$.
Decompose the left hand side with respect to the mixture components, we have
\bas{
\bE [w_1(X)X] =& \sum_i \pi_i \bE_{X\sim \cN_i} [w_1(X)X]\\
=& \sum_i \pi_i \int \phi_i(X)\frac{\pi_1\phi_1(X)}{\sum_k \pi_k{\phi_k(X)}}Xdx\\
=& \pi_1\bE_{X\sim \cN_1}X = \pi_1\bmu_1^*
}

Similarly $\bE [w_1(X)]=\pi_1$. Hence $\nabla_{\bmu_1} q(\bmu) = \bE_X w_1(X;\bmu^*)(X-\bmu_1)= \pi_1(\bmu_1^*-\bmu_1)$.\\
This completes the proof.
\end{proof}

\begin{proof}[Proof of Theorem~\ref{th:contraction_with_gamma}]
	Define 
	By Lemma \ref{lem:grad_q}, the GS condition is equivalent to 
	\bas{
		\norm{\nabla Q(\bmu|\bmu^t)-\nabla q(\bmu)} \le \gamma \|\bmu^t-\bmu^*\|
	}
By triangle inequality,  
    \bas{
    \norm{\bmu_1^{t+1}-\bmu_1^*} =&
    \norm{\bmu_1^{t}-\bmu_1^*+s\nabla Q(\bmu|\bmu^t)}\\
    \le & \norm{\bmu_1^{t}-\bmu_1^*+s\nabla q(\bmu)}+s\norm{\nabla Q(\bmu|\bmu^t)-\nabla q(\bmu)}\\
    \le & \frac{\pi_{\max}-\pi_{\min}}{\pi_{\max}+\pi_{\min}}\norm{\bmu_1^{t}-\bmu_1^*}+\frac{2}{\pi_{\max}+\pi_{\min}}\gamma \norm{\bmu_1^{t}-\bmu_1^*}\\
    \le & \frac{\pi_{\max}-\pi_{\min}+2\gamma}{\pi_{\max}+\pi_{\min}}\norm{\bmu_1^{t}-\bmu_1^*}
    }
    To see why the last inequality hold, notice that $q(\bmu)$ has largest eigenvalue $-\pi_{\min}$ and smallest eigenvalue $-\pi_{\max}$. Apply the classical result for gradient descent, with step size $s=\frac{2}{\pi_{\max}+\pi_{\min}}$ guarantees 
    \bas{
    \norm{\bmu_1^{t}-\bmu_1^*+s\nabla q(\bmu)}\le \frac{\pi_{\max}-\pi_{\min}}{\pi_{\max}+\pi_{\min}}\norm{\bmu_1^{t}-\bmu_1^*}
    }
\end{proof}

\subsection{Proofs of Theorem \ref{th:gmm_gs}}

We start with two lemmas.
\begin{lemma}
For $X\sim \text{GMM}(\pi,\bmu^*,I_d)$, if $\rmin=\tilde{\Omega}(\sqrt{d})$, and $\bmu_i \in \bB(\bmu_i^*,a), \forall i\in [M]$ where 
\bas{
a \leq \half{\rmin} - \sqrt{d}\max(4\sqrt{2[\log (\rmin/4)]_+},8\sqrt{3}).
}
Then for $p=0,1,2$ and $\forall i\in[M]$, we have
\bas{
\bE_X w_i(X;\bmu)(1-w_i(X;\bmu))\|X-\bmu_i\|^p \le 2M\left(\half{3}\rmax +d\right)^{p}\exp \left(-\left(\half{\rmin}-a\right)^2\sqrt{d}/8 \right)
}

\label{lem:poly_expectation}
\end{lemma}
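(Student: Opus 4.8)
The plan is to reduce the claim to one-pair-at-a-time Gaussian tail estimates. Write $w_i(1-w_i)=\sum_{j\neq i}w_i(X;\bmu)\,w_j(X;\bmu)$, so it suffices to bound $\bE_X\!\bigl[w_iw_j\|X-\bmu_i\|^p\bigr]$ by $2(\tfrac32\rmax+d)^p\exp\!\bigl(-(\tfrac{\rmin}{2}-a)^2\sqrt d/8\bigr)$ for each $j\neq i$ and sum the $M-1\le M$ terms. Next, condition on the latent mixture label $Z$ of $X$: $\bE_X[\,\cdot\,]=\sum_{\ell=1}^M\pi_\ell\,\bE_{X\sim\cN(\bmu_\ell^*,I_d)}[\,\cdot\,]$, and since $\sum_\ell\pi_\ell=1$ it is enough to bound the conditional expectation uniformly over $\ell$. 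I would distinguish the cases $\ell\in\{i,j\}$ (where $X$ is typically near one of the two components whose responsibilities are multiplied) from $\ell\notin\{i,j\}$ (where $X$ is typically far from both).

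For the pointwise control of $w_iw_j$ I would use: the trivial $w_iw_j\le\tfrac14$; and, from $\sum_k\pi_k\phi(X|\bmu_k)\ge\pi_i\phi(X|\bmu_i)+\pi_j\phi(X|\bmu_j)$ together with the identity $\log\frac{\phi(X|\bmu_j)}{\phi(X|\bmu_i)}=\bigl\langle X-\tfrac{\bmu_i+\bmu_j}{2},\,\bmu_j-\bmu_i\bigr\rangle$, the bound $w_iw_j\le\min(w_i,w_j)\le\kappa\exp\!\bigl(-\bigl|\langle X-\tfrac{\bmu_i+\bmu_j}{2},\bmu_j-\bmu_i\rangle\bigr|\bigr)$, used for $\ell\in\{i,j\}$; for $\ell\notin\{i,j\}$ I would instead keep $\pi_\ell\phi(X|\bmu_\ell)$ in the denominator, giving $w_iw_j\le w_i\le\kappa\exp\!\bigl(\langle X-\tfrac{\bmu_i+\bmu_\ell}{2},\bmu_\ell-\bmu_i\rangle\bigr)$, which is small because $\bmu_\ell^*$ is at distance $\ge\rmin$ from $\bmu_i$. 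Substituting $X=\bmu_\ell^*+g$ with $g\sim\cN(0,I_d)$, the exponent becomes $c_\ell+\langle g,v\rangle$ for a fixed vector $v$ and a constant $c_\ell$; using $\|\bmu_i-\bmu_i^*\|,\|\bmu_j-\bmu_j^*\|\le a$ and the separation $\rmin$ one checks $|c_\ell|$ is at least a quantity of order $(\tfrac{\rmin}{2}-a)^2$, i.e.\ the true center sits well inside its own Voronoi cell. I would also record the crude bound $\|X-\bmu_i\|\le\|g\|+\|\bmu_\ell^*-\bmu_i^*\|+a\le\|g\|+\tfrac32\rmax$, using $a\le\tfrac{\rmin}{2}\le\tfrac{\rmax}{2}$ and the definition of $\rmax$.

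Finally I would split the Gaussian expectation according to $\{\|g\|\le\rho\}$ versus its complement, for a threshold $\rho$ below the margin to be optimized. On the bulk event, the pointwise bound gives $w_iw_j\le\kappa\exp\!\bigl(-(|c_\ell|-\rho\|v\|)\bigr)$ with $|c_\ell|-\rho\|v\|>0$ large, and $\|X-\bmu_i\|^p\le(\rho+\tfrac32\rmax)^p$. On the complement one uses $w_iw_j\le\tfrac14$ and bounds $\bE\bigl[\|X-\bmu_i\|^p\,\mathbf 1(\|g\|>\rho)\bigr]\le(\tfrac32\rmax+d)^p\exp(-\rho\sqrt d/2)$ via the polynomial-times-Gaussian tail estimate of Lemma~\ref{lem:pol_exp_tail_bound} together with Lemma~\ref{lem:norm_of_gaussian}; the hypotheses $\rmin=\tilde\Omega(\sqrt d)$ and $a\le\tfrac{\rmin}{2}-\sqrt d\max\bigl(4\sqrt{2[\log(\rmin/4)]_+},8\sqrt3\bigr)$ are exactly what make the threshold condition $\rho\ge2\sqrt{d+p}$ of those lemmas available and push both exponents below $-(\tfrac{\rmin}{2}-a)^2\sqrt d/8$.

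The step I expect to be the main obstacle is this last quantitative balancing: choosing $\rho$ so that the bulk ``confusion-region'' contribution and the Gaussian-tail contribution are \emph{simultaneously} dominated by the target $2M(\tfrac32\rmax+d)^p\exp(-(\tfrac{\rmin}{2}-a)^2\sqrt d/8)$, while carrying the polynomial prefactor $\|X-\bmu_i\|^p$ along correctly. This is delicate because $\rmax$ may be much larger than $\rmin$ (so $\|X-\bmu_i\|$ is controlled only by $\rmax$, not by the margin, on the events $\ell\neq i$), because the confusion slab around each decision hyperplane is thin so the pointwise bound on $w_iw_j$ is not uniformly small across the whole bulk event, and because one must track whether the $a$-perturbation between estimated and true centers enters with a favorable sign at each step; the cases $\ell\notin\{i,j\}$ require their own, slightly different, density-ratio bookkeeping.
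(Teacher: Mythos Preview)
Your skeleton matches the paper's: condition on the latent label $\ell$, split each conditional expectation into a ball of radius $r$ around $\bmu_\ell^*$ and its complement, control the bulk via density--ratio bounds on the responsibilities, and control the tail via $w_iw_j\le\tfrac14$ together with Lemma~\ref{lem:pol_exp_tail_bound}; the paper then takes $r=(\rmin/2-a)/4$, and the constraints you anticipate (namely $r\ge 2\sqrt{d+2}$ and $r^2\ge 2d\log(r/2)$) are exactly what force the stated hypothesis on $a$. Where you diverge is the opening move: you factor $w_i(1-w_i)=\sum_{j\ne i}w_iw_j$ and treat each pair, whereas the paper keeps $w_1(1-w_1)$ intact. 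For $\ell=1$ it bounds $1-w_1\le\frac{1-\pi_1}{\pi_1}\exp\bigl(-\tfrac12\rmin(\rmin-2r-2a)\bigr)$ on the bulk, so the outer weight $\pi_1$ cancels the $1/\pi_1$; for $\ell\ne 1$ it drops $1-w_1\le 1$, writes $\pi_\ell\,\bE_\ell\bigl[w_1\|X-\bmu_1\|^p\bigr]$ as $\int\pi_1\phi_1\cdot\frac{\pi_\ell\phi_\ell^*}{\sum_k\pi_k\phi_k}\,\|X-\bmu_1\|^p\,dX$, uses $\phi_\ell^*/\phi_\ell\le e^{(r+a/2)a}$ on the ball, and is left with $\pi_1\int_{\bB(\bmu_\ell^*,r)}\phi_1\|X-\bmu_1\|^p$, which is small because $\phi_1$ is tiny there. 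Summing the three resulting pieces over $\ell$ yields the prefactor $2M$.

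Your pairwise route is sound, but the specific step ``bound the conditional expectation uniformly over $\ell$ and use $\sum_\ell\pi_\ell=1$'' costs a factor of $\kappa$: both of your pointwise estimates $\min(w_i,w_j)\le\kappa\exp(-|\cdot|)$ and $w_i\le\kappa\exp(\cdot)$ carry $\kappa=\pi_{\max}/\pi_{\min}$, and throwing away the outer $\pi_\ell$ blocks the cancellation the paper exploits. As written you would land at $2M\kappa\bigl(\tfrac32\rmax+d\bigr)^p\exp(\cdot)$ rather than the stated $2M\bigl(\tfrac32\rmax+d\bigr)^p\exp(\cdot)$. The repair is to retain the weight---for instance when $\ell=i$ one has $\pi_i\cdot w_j\le\pi_i\cdot\frac{\pi_j\phi_j}{\pi_i\phi_i}=\pi_j\,\phi_j/\phi_i$, so the ratio disappears---but then the bookkeeping must be done per $(\ell,j)$ pair rather than uniformly, and you are essentially back to the paper's organisation. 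Keeping $1-w_1$ unfactored is precisely what makes that cancellation automatic.
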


Using the same techniques, for the cross terms, we have the following lemma.
\begin{lemma}
Assume $X\sim \text{GMM}(\pi,\bmu^*,I_d)$, and $\bmu_i \in \bB(\bmu_i^*,a), \forall i\in [M]$. Under the same conditions as in Lemma~\ref{lem:poly_expectation}, we have for $\forall i\ne j \in [M]$, 
\bas{
\bE_{X} [w_i(X;\bmu)w_j(X;\bmu)\|X-\bmu_i\|\cdot \|X-\bmu_j\|] 
\le &  (1+2\kappa)\left(\frac{3}{2}\rmax+d\right)^2\exp\left(-\left(\half{\rmin}-a\right)^2\sqrt{d}/8\right)  
}
\label{lem:wiwj_bound}
\end{lemma}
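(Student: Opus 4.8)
\textbf{Proof plan for Lemma~\ref{lem:wiwj_bound}.} The plan is to follow the same region‑decomposition scheme used for Lemma~\ref{lem:poly_expectation}, adapted to the fact that the weights $w_i,w_j$ and the norms $\|X-\bmu_i\|,\|X-\bmu_j\|$ now involve two \emph{distinct} and well‑separated centers. First I would condition on the latent mixture label $Z$, writing
\[
\bE_X\!\left[w_i w_j\|X-\bmu_i\|\|X-\bmu_j\|\right]=\sum_{k=1}^M \pi_k\,\bE_{X\sim\cN(\bmu_k^*,I_d)}\!\left[w_i(X;\bmu)w_j(X;\bmu)\|X-\bmu_i\|\|X-\bmu_j\|\right],
\]
so it suffices to bound each conditional expectation and then recombine using $\sum_k\pi_k=1$; it is precisely this last step (rather than a crude $M$‑term sum) that keeps the final prefactor at $1+2\kappa$.

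For a fixed component $\cN(\bmu_k^*,I_d)$ I would use the elementary bound $w_\ell(X;\bmu)\le\kappa\exp(-\tfrac12(\|X-\bmu_\ell\|^2-\|X-\bmu_m\|^2))$, valid for every $m$, together with a partition of $\bR^d$ into a ``bulk'' part where $X$ is close to some center $\bmu_m$ and a ``tail'' part where $X$ is far from all of them. In the bulk, since $i\ne j$ at least one of $i,j$ differs from the nearby index $m$, and for that index the separation $\|\bmu_\ell-\bmu_m\|\ge\rmin-2a$ forces the exponent to be $\Omega((\rmin/2-a)^2)$, so that weight is exponentially small while the other is bounded by $1$ (and $w_iw_j\le\tfrac14$ is used as a blanket pointwise bound). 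In the tail part, $X$ lies at distance $\gtrsim(\rmin/2-a)$ from $\bmu_k^*$, so Lemma~\ref{lem:norm_of_gaussian} makes the Gaussian mass there exponentially small, contributing the $\sqrt d$ factor in the exponent. In every piece the polynomial factor $\|X-\bmu_i\|\|X-\bmu_j\|$ is integrated against the Gaussian density via the moment formula Lemma~\ref{lem:normal_norm}(3) and the polynomial‑times‑Gaussian tail bound Lemma~\ref{lem:pol_exp_tail_bound}; bounding $\|\bmu_k^*-\bmu_i\|\le\rmax+a\le\tfrac32\rmax$ with the help of Lemma~\ref{lem:rmax_mumax} produces the $(\tfrac32\rmax+d)^2$ prefactor. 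Summing the $\le M$ bulk pieces and the tail, then over $k$ with $\sum_k\pi_k=1$, the weight‑ratio factors collapse to $1+2\kappa$, the two copies of $\kappa$ coming from the pieces near $\bmu_i$ and near $\bmu_j$.

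The main obstacle is calibrating the partition radius so that simultaneously (i) the bulk neighborhoods are disjoint and the weight‑ratio gap inside each of them stays $\Omega((\rmin/2-a)^2)$, and (ii) the tail region still has Gaussian mass no larger than the target; reconciling these is exactly what the hypothesis $a\le\tfrac12\rmin-\sqrt d\max(4\sqrt{2[\log(\rmin/4)]_+},8\sqrt3)$ (inherited from Lemma~\ref{lem:poly_expectation}) secures, so most of this step is a direct transfer of that lemma's estimates. I would also record the quicker but lossier route that bypasses the partition entirely: since $w_j\le 1-w_i$ and $\|X-\bmu_i\|\|X-\bmu_j\|\le\tfrac12(\|X-\bmu_i\|^2+\|X-\bmu_j\|^2)$, the integrand is at most $\tfrac12 w_i(1-w_i)\|X-\bmu_i\|^2+\tfrac12 w_j(1-w_j)\|X-\bmu_j\|^2$, and two applications of Lemma~\ref{lem:poly_expectation} with $p=2$ yield the bound with $2M$ in place of $1+2\kappa$; the finer argument above is only needed when dependence on $\kappa$ rather than $M$ is desired.
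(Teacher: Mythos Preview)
Your plan is essentially the paper's proof: condition on the latent label, split each conditional expectation into a bulk ball around the component center and a Gaussian tail, exploit that $i\ne j$ forces one weight to be exponentially small in the bulk via the ratio $\pi_\ell\phi(X;\bmu_\ell)/(\pi_k\phi(X;\bmu_k))\le\kappa\,e^{-\frac12(\|X-\bmu_\ell\|^2-\|X-\bmu_k\|^2)}$, and then pick the radius as in Lemma~\ref{lem:poly_expectation}. Two small mismatches worth flagging: first, the paper uses only \emph{one} bulk region per component, namely $\bB(\bmu_k^*,r)$ for the $k$-th mixture, rather than partitioning near every center $\bmu_m$; the extra regions you describe are already absorbed by the Gaussian tail of $\cN(\bmu_k^*,I_d)$ and add nothing but bookkeeping. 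Second, the constant $1+2\kappa$ in the paper arises as ``$1$ from the tail term $I_0$ plus $2\kappa$ from $\sum_k I_k$'' (the factor $2$ coming from $\Gamma(\tfrac d2+1)\ge 2^{d/2-1}$ in the volume bound), not from two distinguished bulk pieces near $\bmu_i$ and $\bmu_j$ as you suggest. Your alternate route via $w_iw_j\|X-\bmu_i\|\|X-\bmu_j\|\le\tfrac12 w_i(1-w_i)\|X-\bmu_i\|^2+\tfrac12 w_j(1-w_j)\|X-\bmu_j\|^2$ and two calls to Lemma~\ref{lem:poly_expectation} is correct and is a genuine simplification the paper does not record, at the cost of the weaker prefactor $2M$.
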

\begin{proof}[Proof of Lemma \ref{lem:poly_expectation}]
Without loss of generality, we prove the claim for $i=1$. Recall the definition of $w_i(X;\bmu)$ from Equation~\ref{eq:def_w}. For $p\in \{0, 1,2\}$,
\beq{
\bsplt{
& \bE_{X} w_1(X;\bmu)(1-w_1(X;\bmu))\|X-\bmu_1\|^p \\
=& \sum_{i\in [M]} \pi_i \bE_{X\sim \cN(\bmu_i^*)} w_1(X;\bmu)(1-w_1(X;\bmu)) \|X-\bmu_1\|^p\\
 \leq &\pi_1 \bE_{X\sim \cN(\bmu_1^*)}w_1(X;\bmu)(1-w_1(X;\bmu))\|X-\bmu_1\|^p + \sum_{i\ne 1} \pi_i \bE_{X\sim \cN(\bmu_i^*)} w_1(X;\bmu) \|X-\bmu_1\|^p
}
\label{eq:e_mixture_wx}
}
First let us look at the first term. 
Define event $\cE_r^{(1)}=\{X: X\sim \cN(\bmu_1^*); \|X-\bmu_1^*\|\le r\}$ for some $r>0$. We will see later that we need $r<\half{\rmin}-a$. Then for $X\in \cE_r^{(1)}$ using triangle inequality, we have 
\ba{
\|X-\bmu_i\| \begin{cases}
\leq \|X-\bmu_i^*\|+\|\bmu_i^*-\bmu_i\|\leq r+a & \mbox{$i=1$}\\
\geq \|\bmu_i-\bmu_1^*\|-\|X-\bmu_1^*\|\geq \|\bmu_i^*-\bmu_1^*\|-\|\bmu_i^*-\bmu_i\|-r\geq \rmin-r-a& \mbox{$i\neq 1$}
\end{cases}
}
\bas{
& \bE_{X\sim \cN(\bmu_1^*)}w_1(X;\bmu)(1-w_1(X;\bmu))\|X-\bmu_1\|^p\\
=& \bE[w_1(X;\bmu)(1-w_1(X;\bmu))\|X-\bmu_1\|^p | \cE_r^{(1)}]P(\cE_r^{(1)})\\
& \qquad \qquad +\bE[w_1(X;\bmu)(1-w_1(X;\bmu))\|X-\bmu_1\|^p | \cE_r^{(1)c}]P(\cE_r^{(1)c})
}
In view of the fact that $w_1(X;\bmu)$ is monotonically decreasing w.r.t. $\|X-\bmu_i\|$ and increasing w.r.t. $\|X-\bmu_1\|$, we have
\bas{
1-w_1(X;\bmu)\le & \frac{(1-\pi_1)\exp\left( -\half{(\rmin-r-a)^2} \right)}{\pi_1 \exp\left(-\half{(r+a)^2}\right)+(1-\pi_1)\exp\left( -\half{(\rmin-r-a)^2} \right)}\\
\le & \frac{1-\pi_1}{\pi_1} \exp\left(-\half{1}\rmin(\rmin-2r-2a)\right)
}
Also notice that $w_1(X;\bmu)\le 1$, we have
\bas{
& \bE[w_1(X;\bmu)(1-w_1(X;\bmu))\|X-\bmu_1\|^p | \cE_r^{(1)}]P(\cE_r^{(1)})\\
\le & \frac{1-\pi_1}{\pi_1} \exp\left(-\half{1}\rmin(\rmin-2r-2a)\right) (r+a)^p
}

For $\cE_r^{(1)c}$, note $w_1(X;\bmu)(1-w_1(X;\bmu))\le \frac{1}{4}$, we have for $p=1$,
\bas{
&\bE[w_1(X;\bmu)(1-w_1(X;\bmu))\|X-\bmu_1\| | \cE_r^{(1)c}]P(\cE_r^{(1)c})\\
\le & \frac{1}{4}\int_{u=r}^\infty (u+a) (2\pi)^{-\half{d}}\exp\left(-\half{u^2}\right)\cdot  \frac{2\pi^{\half{d}}}{\gamfun{\half{d}}}u^{d-1} du\\
\le & \frac{1}{4}(2\pi)^{-\half{d}}\frac{2\pi^{\half{d}}}{\gamfun{\half{d}}}\int_{u=r}^\infty (u+a) \exp\left(-\half{u^2}\right)u^{d-1} du\\
\stackrel{(i)}{\le} & \frac{a+d}{4}\exp\left(-\half{r}\sqrt{d}\right)
}
The inequality (i) follows from Lemma \ref{lem:pol_exp_tail_bound} when $r>2\sqrt{d+1}$.
Similarly, for $p=2$,
\bas{
&\bE[w_1(X;\bmu)(1-w_1(X;\bmu))\|X-\bmu_1\|^2 | \cE_r^{(1)c}]P(\cE_r^{(1)c})\\
\le & \frac{2^{-\half{d}-1}}{\gamfun{\half{d}}} \int_r^\infty (u+a)^2u^{d-1}e^{-\half{u^2}}du \stackrel{(ii)}{\le} \frac{(a+d)^2}{4}\exp\left(-\half{r}\sqrt{d}\right)
}
The inequality (ii) follows from Lemma \ref{lem:pol_exp_tail_bound} when $r>2\sqrt{d+1}$ and $p=2$.
Therefore for the first mixture we have,
\ba{
& \pi_1 \bE_{X\sim \cN(\bmu_1^*)}w_1(X;\bmu)(1-w_1(X;\bmu))\|X-\bmu_1\|^p \nonumber\\
\le & (1-\pi_1) (r+a)^p\exp\left( -\half{1}\rmin(\rmin-2r-2a) \right) +\pi_1 \frac{(a+d)^p}{4}\exp\left(-\half{r}\sqrt{d}\right)
\label{eq:w1mw_firstmix}
}
Next we bound $\bE_{X\sim \cN(\bmu_i^*)} w_1(X;\bmu) \|X-\bmu_1\|^p$ for $i\ne 1$. For some $0<r<\half{R}-a$, we have
\beq{
\bsplt{
&\pi_i \bE_{X\sim \cN(\bmu_i^*)} w_1(X;\bmu)\|X-\bmu_1\|^p \\
= & \int_X \frac{\pi_1\phi(X;\bmu_1)\cdot \pi_i\phi(X;\bmu_i^*)}{\sum_j \pi_j \phi(X;\bmu_j)}\|X-\bmu_1\|^p dX\\
=& \underbrace{\int_{X\in \bB(\bmu_i^*,r)} \frac{\pi_1\phi(X;\bmu_1)\cdot \pi_i\phi(X;\bmu_i^*)}{\sum_j \pi_j\phi(X;\bmu_j)}\|X-\bmu_1\|^p dX}_{I_1^{(p)}}+\underbrace{\int_{X\not\in \bB(\bmu_i^*,r)} \frac{\pi_1\phi(X;\bmu_1) \cdot \pi_i\phi(X;\bmu_i^*)}{\sum_j \pi_j\phi(X;\bmu_j)}\|X-\bmu_1\|^p dX}_{I_2^{(p)}}
}
\label{eq:piiEwnorm}
}
 When $\|X-\bmu_i^*\|\le r$, since by assumption $\|\bmu_i-\bmu_i^*\|\le a$, 
\beq{
\bsplt{
\frac{\phi(X;\bmu_i^*)}{\phi(X;\bmu_i)} = & \exp\left(\frac{\|X-\bmu_i\|^2}{2}-\frac{\|X-\bmu_i^*\|^2}{2}\right)\\
=&  \exp\left( \left(X-\frac{\bmu_i+\bmu_i^*}{2}\right)^T(\bmu_i-\bmu_i^*)\right) 
}
}
Since by Cauchy-Schwarz we have $|(X-\frac{\bmu_i+\bmu_i^*}{2})^T(\bmu_i-\bmu_i^*)|=|(X-\bmu_i^*+\frac{\bmu_i^*-\bmu_i}{2})^T(\bmu_i-\bmu_i^*)|\leq (r+a/2)a $, we have:
\ba{
\exp\left(-(r+\half{a})a\right)\leq \frac{\phi(X;\bmu_i^*)}{\phi(X;\bmu_i)}\leq \exp\left((r+\half{a})a\right)
\label{eq:ratio_of_phi_ra}
}
For such $X$, $\phi(X;\bmu_1) \le (2\pi)^{-\half{d}}\exp\left(-\half{(\rmin-r-a)^2}\right)$, and we have 
\bas{
I_1^{(p)} =& \int_{X\in \bB(\bmu_i^*,r)} \frac{\pi_1\phi(X;\bmu_1)\pi_i\phi(X;\bmu_i^*)}{\sum_j \pi_j\phi(X;\bmu_j)}\|X-\bmu_1\|^p dX\\
\le & \int_{X\in \bB(\bmu_i^*,r)} \frac{\pi_1\phi(X;\bmu_1)\pi_i\phi(X;\bmu_i)\exp\left((r+\half{a})a\right)}{\sum_j\pi_j \phi(X;\bmu_j)}\|X-\bmu_1\|^p dX\\
\le & \pi_1\exp\left((r+\half{a})a\right)\int_{X\in \bB(\bmu_i^*,r)} \phi(X;\bmu_1)\|X-\bmu_1\|^p dX\\
\le & \pi_1(2\pi)^{-d/2}\exp\left((r+\half{a})a\right)(\rmax+a+r)^p\exp\left(-\half{(\rmin-r-a)^2}\right) \frac{\pi^{d/2}}{\Gamma(\half{d}+1)}r^d\\
\le & \frac{\pi_1 2^{-d/2}}{\Gamma(\half{d}+1)}\exp\left((r+\half{a})a-\frac{(\rmin-r-a)^2}{2}\right)(\rmax+a+r)^{p}r^d\\
\le & \pi_1 2^{1-d}\exp\left(\rmin\left(a- \half{\rmin}(1-r/\rmin)^2\right)\right)(\rmax+a+r)^{p}r^d\\
}
The last inequality follows from the fact that $\gamfun{\half{d}+1}\ge ([\half{d}])!\ge 2^{\half{d}-1}$.
On the other hand, for $I_2$, since $w_1(X;\bmu)\le 1$, taking spherical coordinate transformation we have,
\bas{
I_2^{(p)} \le &  \int_{\|X-\bmu_i^*\|\ge r} \pi_i \phi(X;\bmu_i^*)\|X-\bmu_1\|^p dX\\
\le & \pi_i\int_{\|X-\bmu_i^*\|\ge r} (2\pi)^{-d/2}\exp(-\half{\|X-\bmu_i^*\|^2}) \|X-\bmu_1\|^p dX\\
\le &  \frac{\pi_i 2^{1-d/2}}{\Gamma(\frac{d}{2})} \int_{u=r}^\infty u^{d-1}\exp \left(-\half{u^2}\right) (u+\rmax+a)^p du 
}
Apply Lemma \ref{lem:pol_exp_tail_bound}, when $r\ge 2\sqrt{d+2}$, for $p\in\{0,1,2\}$
\ba{
I_2^{(p)} \le &  \pi_i \left(\rmax+a+d\right)^p\exp\left(-\half{r}\sqrt{d}\right)
\label{eq:outside_ball_quadratic_bound}
}
Summing up $I_1$ and $I_2$, for any $0<r<\rmin/2$, from \eqref{eq:piiEwnorm} we get:
\ba{
& \pi_i \bE_{X\sim \cN(\bmu_i^*)} w_1(X;\bmu)\|X-\bmu_1\|^p \nonumber \\
\le & \pi_1 2^{1-d}\exp\left(\rmin\left(a- \half{\rmin}(1-r/\rmin)^2\right)\right)(\rmax+a+r)^{p}r^d + \pi_i \left(\rmax+a+d\right)^p \exp\left(-\half{r}\sqrt{d}\right) \label{eq:piiw1norm}
}
Now plugging Eq.~\eqref{eq:w1mw_firstmix} and Eq.~\eqref{eq:piiw1norm} into Eq.~\eqref{eq:e_mixture_wx} gives, 
\bas{
&\bE_X w_1(X;\bmu)(1-w_1(X;\bmu))\|X-\bmu_1\|^p \\
\le &  (1-\pi_1) (r+a)^p\exp\left( -\half{1}\rmin(\rmin-2r-2a) \right) +\pi_1 \frac{(a+d)^p}{4}\exp\left(-\half{r}\sqrt{d}\right) \\
&+ \pi_1 (M-1)2^{1-d}\exp\left(\rmin\left(a- \half{\rmin}(1-r/\rmin)^2\right)\right)(\rmax+a+r)^{p}r^d\\
&+(1-\pi_1)\left(\rmax+a+d\right)^p \exp\left(-\half{r}\sqrt{d}\right)\\
\stackrel{}{\le} & \underbrace{(1-\pi_1) (r+a)^p\exp\left( -\half{1}\rmin(\rmin-2r-2a) \right)}_{(A)}
+\underbrace{\left(\rmax+a+d\right)^p \exp\left(-\half{r}\sqrt{d}\right)}_{(B)}\\
&+\underbrace{ 2\pi_1 (M-1) \exp\left(\rmin\left(a- \half{\rmin}(1-r/\rmin)^2\right)+d\log (r/2)\right)(\rmax+a+r)^{p}}_{(C)}
}

Note that in order to have a negative term inside exponential of (A),
we require $r+a< \half{\rmin}$. In order to ensure the same for (C), we need:
\ba{
\label{eq:a-ub}
a< \half{\rmin}\left(1-\frac{r}{\rmin}\right)^2
}

If $r^2\geq 2d\log (r/2)$, then we have:
\bas{
&\exp\left(\rmin\left(a- \half{\rmin}(1-r/\rmin)^2\right)+d\log (r/2)\right)\leq \exp\left(\rmin\left(a- \half{\rmin}(1-r/\rmin)^2\right)+r^2/2\right)\\
&\leq \exp\left(\rmin a- \left(\half{\rmin^2}-r\rmin+\half{r^2}\right)+\half{r^2}\right)\\
&=\exp\left( -\half{1}\rmin(\rmin-2r-2a) \right)
}
Therefore, $(A)+(C)\leq (1-\pi_1+2\pi_1(M-1))(R_{\max}+a+r)^{p}\exp\left( -\half{1}\rmin(\rmin-2r-2a) \right)$

Finally, if $r\leq \rmin\frac{\rmin/2-a}{\rmin+\sqrt{d}/2}$,
we have:
\bas{
\exp\left( -\half{1}\rmin(\rmin-2r-2a) \right)\leq \exp(-\half{r}\sqrt{d})
}
Hence, 
\bas{
(A)+(B)+(C)\leq & (2-\pi_1+2\pi_1(M-1))\left(\half{3}\rmax+d\right)^{p}\exp \left(-\half{r}\sqrt{d}\right)\\
\le& 2M\left(\half{3}\rmax+d\right)^{p}\exp \left(-\half{r}\sqrt{d}\right)
}

Set 
\ba{
r= \frac{\rmin/2-a}{4}, \quad a\le \frac{\rmin}{2}\label{eq:constraint13}
}
then Eq~\eqref{eq:a-ub} and $a+r\le \half{\rmin}$ are automatically satisfied. When $\rmin \ge \frac{\sqrt{d}}{6}$, we have $r\le \rmin\frac{\rmin/2-a}{\rmin+\sqrt{d}/2}$. Finally in order to meet the constraints
\ba{
r&\geq 2\sqrt{d+2} \Leftarrow r\geq 3\sqrt{d}\label{eq:constraint1}\\
r^2&\geq 2d\log r/2 \label{eq:constraint2}
}
we need
\bas{
\frac{\rmin/2-a}{4}&\geq \max(\sqrt{2d[\log (\rmin/4)]_+},2\sqrt{3}\sqrt{d})\\
a & \leq \half{\rmin}- \sqrt{d}\max(4\sqrt{2[\log( \rmin/4)]_+},8\sqrt{3})
}
The right hand side of last inequality is non-negative when $\rmin=\tilde{\Omega}(\sqrt{d})$.
Under these conditions, with Eq.~\eqref{eq:constraint13} plugged in, we have
\bas{
\bE_X w_1(X;\bmu)(1-w_1(X;\bmu))\|X-\bmu_1\|^p 
\le  2M\left(\half{3}\rmax+d\right)^{p}\exp \left(-\left(\half{\rmin}-a\right)^2\sqrt{d}/8 \right)
}
\end{proof}

\begin{proof}[Proof of Lemma \ref{lem:wiwj_bound}]
For any $r\le \half{\rmin}-a$, define $\cE_0=\{X: \exists i, \text{ such that } Z_X = i, \|X-\bmu_i^*\|> r\}$ and $\cE_k=\{X: Z_X=k, \|X-\bmu_k^*\|\le r\}$.
\bas{
& \bE_{X}\left[ w_i(X;\bmu)w_j(X;\bmu)\|X-\bmu_i\|\cdot \|X-\bmu_j\|\right] \\
\le & \underbrace{\bE_{X}\left[ w_i(X;\bmu)w_j(X;\bmu)\|X-\bmu_i\|\|X-\bmu_j\||\cE_0\right] P(\cE_0)}_{I_0} \\
&+ \sum_{k\in[M]} \underbrace{\pi_k \bE_{X\sim \cN(\bmu_k^*)} \left[w_i(X;\bmu)w_j(X;\bmu)\|X-\bmu_i\|\|X-\bmu_j\|| \|X-\bmu_k\|\le r \right]}_{I_k}
}
First we look at $I_0$, this again can be decomposed as the sum over mixtures. Similarly as in Eq.~\eqref{eq:outside_ball_quadratic_bound}, we have
\bas{
I_0 \le & \left(\rmax+a+d\right)^2\exp\left(-\half{r}\sqrt{d}\right)
}
For $I_k$, by Eq.~\eqref{eq:ratio_of_phi_ra},
\beq{
\bsplt{
I_k = & \int_X \frac{\pi_i\phi(X;\bmu_i)\pi_j\phi(X;\bmu_j)\pi_k\phi(X;\bmu_k^*)}{(\sum_\ell \pi_\ell \phi(X;\bmu_t))^2} \|X-\bmu_i\|\cdot \|X-\bmu_j\|dX\\
\le & \int_X \frac{\pi_i\phi(X;\bmu_i)\pi_j\phi(X;\bmu_j)\pi_k\phi(X;\bmu_k)\exp((r+a/2)a)}{(\sum_\ell \pi_\ell \phi(X;\bmu_\ell))^2} \|X-\bmu_i\|\cdot \|X-\bmu_j\|dX\\
\le & \kappa \pi_k 2\pi^{-\frac{d}{2}}\exp(-\frac{R_{(\min}-r-a)^2}{2}) \exp((r+a/2)a) (\rmax+r+a)^2 \frac{\pi^{d/2}}{\gamfun{\half{d}+1}}r^d \\
\le & \pi_k \kappa 2^{-d/2} \frac{1}{\gamfun{\half{d}+1}}r^d \exp\left((r+a/2)a-\half{(\rmin-r-a)^2}\right) (\rmax+r+a)^2 \\
\le & 2\pi_k \kappa  \exp\left(\rmin\left( a-\half{\rmin}\left( 1-\frac{r}{\rmin} \right)^2 \right) + d\log (r/2)\right) (\rmax+r+a)^2 
}
}
Adding up $I_k$'s and $I_0$, we have
\bas{
& \bE_{X} \left[w_i(X;\bmu)w_j(X;\bmu)\|X-\bmu_i\|\|X-\bmu_j\|\right] \\
\le & \left(\rmax+a+d\right)^2\exp\left(-\half{r}\sqrt{d}\right) \\
&\qquad \qquad+ 2\kappa \exp\left(\rmin\left( a-\half{\rmin}\left( 1-\frac{r}{\rmin} \right)^2 \right) + d\log (r/2)\right) (\rmax+r+a)^2 
}
Take $r=\frac{1}{4}\left(\half{\rmin}-a\right)$,
we have $\rmin\left( a-\half{\rmin}\left( 1-\frac{r}{\rmin} \right)^2 \right) + d\log (r/2) \le -\frac{r}{2}\sqrt{d}$. Therefore,
\bas{
& \bE_{X} [w_i(X;\bmu)w_j(X;\bmu)\|X-\bmu_i\|\cdot \|X-\bmu_j\|] \\
\le & (1+2\kappa)\left(\frac{3}{2}\rmax+d\right)^2\exp\left(-\left(\half{\rmin}-a\right)^2\sqrt{d}/8\right)  
}
\end{proof}

\begin{proof}[Proof of Theorem \ref{th:gmm_gs}]
Consider the difference of the gradient corresponding to $\bmu_i$, without loss of generality, assume $i=1$.
\beq{
\bsplt{
\nabla_{\bmu_1} Q(\bmu^t|\bmu^t) - \nabla q(\bmu^t)  = &  \bE (w_1(X;\bmu^t)- w_1(X;\bmu^*))(X-\bmu_1^t)
}
\label{eq:gs_lhs_step1}
}
For any given $X$, consider the function $\bmu \to w_1(X;\bmu)$, we have
\beq{
\bsplt{
\nabla_{\bmu} w_1(X;\bmu) =&
\begin{pmatrix} w_1(X;\bmu) (1-w_1(X;\bmu) )(X-\bmu_1)^T \\
- w_1(X;\bmu) w_2(X;\bmu) (X-\bmu_2)^T\\
\vdots\\
- w_1(X;\bmu) w_M(X;\bmu) (X-\bmu_M)^T
\end{pmatrix}
}
\label{eq:diff_w1}
}
Let $\bmu^u = \bmu^*+u(\bmu^t-\bmu^*), \forall u\in [0,1]$, obviously $\bmu^u \in \otimes_{i=1}^M \bB(\bmu_i^*, \|\bmu_i^t-\bmu_i^*\|) \subset \otimes_{i=1}^M \bB(\bmu_i^*, a)$.
 By Taylor's theorem,
\beq{
\bsplt{
&\| \bE (w_1(X;\bmu_1^t) -w_1(X;\bmu_1^*))(X-\bmu_1^t)\| = \left\Vert\bE \left[\int_{u=0}^1 \nabla_{u} w_1(X;\bmu^u) du (X-\bmu_1^t)  \right]\right\Vert\\
 =& \left\Vert \int_{u=0}^1 \bE w_1(X;\bmu^u) (1-w_1(X;\bmu^u) )(X-\bmu_{1}^u)^T(\bmu_1^t-\bmu_1^*) (X-\bmu_1^t)du \right.\\
 & \left. \qquad - \sum_{i\ne 1} \int_{u=0}^1 \bE w_1(X;\bmu^u) w_i(X;\bmu^u) )(X-\bmu_{2}^u)^T(\bmu_2^t-\bmu_2^*)(X-\bmu_1^t)du \right\Vert \\
 \le &  U_1 \|\bmu_1^t-\bmu_1^*\|_2 + \sum_{i\ne 1}U_i\|\bmu_i^t-\bmu_i^*\|_2
}
\label{eq:u1_plus_ui}
}
where 
\bas{
U_1 =& \sup_{u\in [0,1]} \| \bE w_1(X;\bmu^u) (1-w_1(X;\bmu^u) ) (X-\bmu_1^t)(X-\bmu_{1}^u)^T \|_{op}\\
U_i =& \sup_{u\in [0,1]} \| \bE w_1(X;\bmu^u) w_i(X;\bmu^u) (X-\bmu_1^t)(X-\bmu_{2}^u)^T \|_{op}
}
For $U_1$ by triangle inequality we have,
\ba{
U_1\le & \sup_{u\in [0,1]} \| \bE w_1(X;\bmu^u) (1-w_1(X;\bmu^u) ) (X-\bmu_{1}^u)(X-\bmu_{1}^u)^T \|_{op} \nonumber\\
&+\sup_{u\in [0,1]} \| \bE w_1(X;\bmu^u) (1-w_1(X;\bmu^u) ) (\bmu^u_{1}-\bmu_1^t)(X-\bmu^u_{1})^T \|_{op}\nonumber\\
\le & \sup_{u\in [0,1]} \| \bE w_1(X;\bmu^u) (1-w_1(X;\bmu^u) ) (X-\bmu_{1}^u)(X-\bmu_{1}^u)^T \|_{op} \nonumber\\
&+ a \sup_{u\in [0,1]} \| \bE w_1(X;\bmu^u) (1-w_1(X;\bmu^u) ) (X-\bmu^u_{1}) \|
\label{eq:U1_sup_opnorm}
}
We now develop an uniform bound for the operator norm. 
For any $u\in [0,1]$, there exists a rotation matrix $O$, such that all $R\bmu^u_i, i\in [M]$ have non-zero entries in the leading $\min\{d,M\}$ coordinates, and zeros for the remaining $[d-M]_+$ coordinates. 
Denote $\tilde{X}:=OX$, then $\tilde{X}|Z=i\sim \cN(O\bmu_i^*,I_d)$. Let 
\bas{
O\bmu_i^u=[\tilde{\bmu}_i^u,0_{[d-M]_+}]\text{ and }O \bmu_i^*=[v_i^{\min\{d,M\}},v_i^{[d-M]_+}],\quad \tilde{\bmu}_i^u\in \bR^{\min\{d,M\}}
}
For ease of notation, we assume $d\ge M$ for now, the other case can be derived without much modification. We can rewrite
\bas{
(X-\bmu_{1}^u)(X-\bmu_{1}^u)^T = O^T \begin{bmatrix}
(\tilde{X}^M-\tilde{\bmu_1^u})(\tilde{X}^M-\tilde{\bmu_1^u})^T & (\tilde{X}^M-\tilde{\bmu_1^u})(\tilde{X}^{d-M})^T\\
(\tilde{X}^{d-M})(\tilde{X}^M-\tilde{\bmu_1^u})^T & (\tilde{X}^{d-M})(\tilde{X}^{d-M})^T
\end{bmatrix} O
}

Note by the rotation, $w_i(X;\bmu)$ only depend on the first $M$ coordinates. And by isotropicity, $\Xt^M$ and $\Xt^{d-M}$ are independent. By $\bE \Xt^{d-M}=0$ (since we assume that the centroid of the means is at zero, and a rotation does not change that) and $\bE \Xt^{d-M}(\Xt^{d-M})^T=I_{d-M}+\sum_i \pi_i (v_i^{d-M})(v_i^{d-M})^T$, we have,

\bas{
& \| \bE w_1(X;\bmu^u) (1-w_1(X;\bmu^u) ) (X-\bmu_{1}^u)(X-\bmu_{1}^u)^T \|_{op}= \left\Vert \begin{bmatrix} D_1 & 0\\
0 &  D_2
\end{bmatrix}\right\Vert_{op}\\
\le & \max\{ \|D_1\|_{op}, \|D_2 \|_{op} \}
}
$D_1$ and $D_2$ are defined below.
Applying Lemma \ref{lem:poly_expectation} with dimension $\min\{d,M\}$, when $\rmin = \Omega(\sqrt{\min\{d,M\}})$,
\bas{
\|D_1\|_{op}&=\|\bE w_1(\Xt;\mut^u) (1-w_1(\Xt;\mut^u) ) (\tilde{X}^{\min\{d,M\}}-\tilde{\bmu_1^u})(\tilde{X}^{\min\{d,M\}}-\tilde{\bmu_1^u})^T\|_{op} \\
\le & 2M\left(\frac{3}{2}\rmax + \min\{d,M\}\right)^2\exp\left(-\left(\half{\rmin}-a\right)^2\sqrt{\min\{d,M\}}/8\right)
}

For $D_2$, by independence and Lemma \ref{lem:poly_expectation}, when $\rmin = \Omega(\sqrt{\min\{d,M\}})$,
\bas{
&\|D_2\|_{op} = \left\Vert \bE w_1(\Xt;\mut^u) (1-w_1(\Xt;\mut^u) )\left(I_{[d-M]_+}+\sum_i \pi_i (v_i^{[d-M]_+})(v_i^{[d-M]_+})^T\right)\right\Vert_{op}\\
=&  \left\Vert \left(\bE_{\Xt_{\min\{d,M\}}} w_1(\Xt_{\min\{d,M\}};\mut^u) (1-w_1(\Xt_{\min\{d,M\}};\mut^u) )\right) \right. \\
& \left. \qquad \qquad \cdot \bE_{X_{[d-M]_+}}\left(I_{[d-M]_+}+\sum_i \pi_i (v_i^{[d-M]_+})(v_i^{[d-M]_+})^T\right)\right\Vert_{op}\\
\le& (R_{\max}^2+1)2M\exp\left(-\left(\half{\rmin}-a\right)^2\sqrt{\min\{d,M\}}/8\right)
}
Combining the two and plugging in Eq.~\eqref{eq:U1_sup_opnorm},
\bas{
U_1 \le & 2M\exp\left(-\left(\half{\rmin}-a\right)^2\sqrt{\min\{d,M\}}/8\right)\cdot\\
&\qquad \left( \max\left\{ \left(\frac{3}{2}\rmax + \min\{d,M\}\right)^2,(R_{\max}^2+1)  \right\} + a\left(\frac{3}{2}\rmax + \min\{d,M\}\right)\right) \\
\le & 2M\left(2\rmax + \min\{d,M\}\right)^2\exp\left(-\left(\half{\rmin}-a\right)^2\sqrt{\min\{d,M\}}/8\right)
}
The max will always be achieved at the first term as $\min\{d,M\}\ge 1$.
Similarly, with the same rotation, for $U_i,i\ne 1$, 
\bas{
U_i \le \sup_u \|\bE w_1(X;\bmu^u)w_i(X;\bmu^u)(X-\bmu_1^u)(X-\bmu_i^u)^T\|_{op}+a\|\bE w_1(X;\bmu^u)w_i(X;\bmu^u)(X-\bmu_i^u)\|
}
By Lemma~\ref{lem:wiwj_bound}, when $\rmin = \Omega(\sqrt{\min\{d,M\}})$, we have
\bas{
U_i \le & \exp\left(-\left(\half{\rmin}-a\right)^2\sqrt{\min\{d,M\}}/8\right)\cdot \\
&   \left(\max\left\{ (1+2\kappa)\left(\half{3}\rmax+\min\{d,M\}\right)^2  , 2M(R_{\max}^2+1) \right\}
+ 2Ma\left(\frac{3}{2}\rmax + \min\{d,M\}\right)\right) \\
\le &  \exp\left(-\left(\half{\rmin}-a\right)^2\sqrt{\min\{d,M\}}/8\right)\left( \frac{3}{2}\rmax+ \min\{d,M\} \right) \\
& \qquad \qquad \cdot \left( \max\{(1+2\kappa),2M\}\left( \frac{3}{2}\rmax+ \min\{d,M\} \right)+2Ma \right)\\
\le &  \exp\left(-\left(\half{\rmin}-a\right)^2\sqrt{\min\{d,M\}}/8\right)\left( \frac{3}{2}\rmax+ \min\{d,M\} \right)^2\cdot \max\{3M,M+2\kappa+1\}\\
\le & M(2\kappa+4)\left( \frac{3}{2}\rmax+ \min\{d,M\} \right)^2\exp\left(-\left(\half{\rmin}-a\right)^2\sqrt{\min\{d,M\}}/8\right)
}
The second inequality is because $R_{\max}^2+1 \le\left(\half{3}\rmax+\min\{d,M\}\right)^2$ and the third inequality is because $2a \le \half{3}\rmax+\min\{d,M\}$. 
Taking back to Eq.~\eqref{eq:u1_plus_ui}, and summing over $i\in [M]$, we have
\bas{
&\|\nabla_{\bmu_i} Q(\bmu|\bmu^t) - \nabla_{\bmu_i} q(\bmu)\|\\ 
\le& M(2\kappa+4)\left( 2\rmax+ \min\{d,M\} \right)^2\exp\left(-\left(\half{\rmin}-a\right)^2\sqrt{\min\{d,M\}}/8\right)\sum_{i=1}^M\|\bmu^t_i-\bmu_i^*\|
}

This completes the proof.
\end{proof}

\subsection{Proof of Theorem \ref{th:population_contraction}}
\label{sec:proof_main_popu}

\begin{proof}[Proof of Theorem \ref{th:population_contraction}]
By Theorem \ref{th:gmm_gs} and Theorem \ref{th:contraction_with_gamma}, it suffices to check $\gamma \le \pi_{\min}$. Solving the inequality we have 
\bas{
a\le \half{\rmin} - \frac{2\sqrt{2}}{\sqrt[4]{\min\{d,M\}}} \sqrt{\log\left( \frac{M^2(2\kappa+4)(2\rmax+\min\{d,M\})^2}{\pi_{\min}} \right)}
}
Combined with the condition in Theorem \ref{th:gmm_gs}, we have
\bas{
a\le& \half{\rmin}-\max\left\{ \frac{2\sqrt{2}}{\sqrt[4]{\min\{d,M\}}}\sqrt{\log\left(\frac{M^2(2\kappa+4)(2\rmax+\min\{d,M\})^2}{\pi_{\min}}\right)}, \right. \\
& \qquad\qquad\qquad  \left. \sqrt{\min\{d,M\}}\max(4\sqrt{2[\log (\rmin/4)]_+},8\sqrt{3}) \right\} \\
=& \half{\rmin}- \sqrt{\min\{d,M\}} o(\rmin)
}
because
\bas{
& \max \left\{ c\sqrt{\log( c_1\frac{M^2\kappa}{\pi_{\min}} +2\log \left(2\rmax+\min\{d,M\}\right)}, \sqrt{\min\{d,M\}}\max\{c_2\sqrt{\log (\rmin/4)_+},8\sqrt{3}\} \right\} \\
\leq & \max \left\{ c \sqrt{\log(c_1\frac{M^2\kappa}{\pi_{\min}}+c_2\rmax+c_3{\min\{d,M\}})}, c'\sqrt{\min\{d,M\}}\sqrt{\log(\rmax+e)} \right\}\\
\leq & \sqrt{\min\{d,M\}}O\left(\sqrt{\log\left(\max\left\{\frac{M^2\kappa}{\pi_{\min}},\rmax,\min\{d,M\}\right\}\right)}\right)
}
The condition in Theorem \ref{th:gmm_gs} can be rewritten as 
\bas{
a \leq \half{\rmin} - \sqrt{\min\{d,M\}}O\left(\sqrt{\log\left(\max\left\{\frac{M^2\kappa}{\pi_{\min}},\rmax,\min\{d,M\}\right\}\right)}\right)
}

\end{proof}

\section{Proofs for sample-based gradient EM}
\label{app:proof_sample}
In this section we develop the error bound for sample-based gradient EM. Our proof is based on the Rademacher complexity theory and some new tools for contraction result. 
In \cite{maurer2016vector}, Maurer has the following contraction result for the complexity defined over countable sets.
\begin{lemma}[Theorem 3 \cite{maurer2016vector}]
	\label{lem:maurer}
Let $X$ be nontrivial, symmetric and sub-gaussian. Then there exists a constant $C<\infty$, depending only on the distribution of $X$, such that for any countable set $\cS$ and function $h_i:\cS \to \bR$, $f_i: \cS\to \bR^k$, $i\in [n]$ satisfying
$ \forall s,s'\in \cS, |h_i(s)-h_i(s')|\le L \|f(s)-f(s')\|$. If $\epsilon_{ik}$ is an independent doubly indexed Rademacher sequence, 
we have,
\bas{
\bE \sup_{s\in \cS}\sum_i \epsilon_i h_i(s)\le \bE \sqrt{2}L \sup_{s\in \cS}\sum_{i,k}\epsilon_{ik}f_i(s)_k,
}
where $f_i(s)_k$ is the $k$-th component of $f_i(s)$.
\label{lem:vector_contraction}
\end{lemma}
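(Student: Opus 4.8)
The plan is to prove the displayed Rademacher inequality (with constant $\sqrt2$) by a sequential ``contract one coordinate at a time'' scheme in the spirit of the Ledoux--Talagrand argument, where the single ingredient that manufactures the constant $\sqrt2$ is a sharp Khintchine lower bound. First I would reduce to the case of a finite index set $\cS$: since $\cS$ is countable, write it as an increasing union $\cS^{(1)}\subset\cS^{(2)}\subset\cdots$ of finite sets, so that the suprema over $\cS^{(m)}$ increase to the supremum over $\cS$, and by monotone convergence it suffices to establish the bound uniformly over finite $\cS$. This also renders all suprema measurable and finite, removing integrability concerns.

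The core is a single-coordinate replacement step. Fix an index $i$, condition on all Rademacher variables not attached to coordinate $i$, and collect the remaining terms into a function $R(s)$ that does not depend on them. Using $\max(a,b)=\frac{a+b}{2}+\frac{|a-b|}{2}$ I would write $\bE_{\epsilon_i}\sup_s[\epsilon_i h_i(s)+R(s)]=\frac12\sup_{s,s'}[h_i(s)-h_i(s')+R(s)+R(s')]$ and represent the vectorized target analogously. Two facts then drive everything: (i) for any symmetric random variable $Z$ and any constant $c$ one has $\bE|Z+c|\ge \bE|Z|$ (from $Z\stackrel{d}{=}-Z$ and the triangle inequality); and (ii) the sharp Khintchine bound $\bE_\epsilon|\ip{\epsilon,v}|\ge \norm{v}/\sqrt2$.

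Combining (i) and (ii) with the hypothesis $|h_i(s)-h_i(s')|\le L\norm{f_i(s)-f_i(s')}$, and writing $v=f_i(s)-f_i(s')$ and $c=R(s)-R(s')$, I would estimate $\bE_{\epsilon_{i\cdot}}|\sqrt2 L\ip{\epsilon_{i\cdot},v}+c|\ge \sqrt2 L\,\bE_{\epsilon_{i\cdot}}|\ip{\epsilon_{i\cdot},v}|\ge \sqrt2 L\cdot\norm{v}/\sqrt2=L\norm{v}\ge h_i(s)-h_i(s')$. This yields $\bE_{\epsilon_i}\sup_s[\epsilon_i h_i(s)+R(s)]\le \bE_{\epsilon_{i\cdot}}\sup_s[\sqrt2 L\sum_k\epsilon_{ik}f_i(s)_k+R(s)]$, where a fresh independent block $\epsilon_{i\cdot}=(\epsilon_{i1},\dots,\epsilon_{ik})$ replaces the single $\epsilon_i$. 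The crucial point is that the factor $\sqrt2$ is introduced exactly once per coordinate and is immediately cancelled by Khintchine down to $L\norm{v}$, so it does not accumulate across the coordinates.

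Finally I would iterate the replacement over $i=1,\dots,n$: at stage $i$ the remainder $R(s)$ consists of the already-vectorized terms $\sqrt2 L\sum_k\epsilon_{jk}f_j(s)_k$ for $j<i$ together with the still-scalar terms $\epsilon_j h_j(s)$ for $j>i$, none of which involves $\epsilon_{i\cdot}$, so fact (i) legitimately absorbs the cross term $c$. After $n$ steps every scalar term is replaced, giving $\bE\sup_s\sum_i\epsilon_i h_i(s)\le \sqrt2 L\,\bE\sup_s\sum_{i,k}\epsilon_{ik}f_i(s)_k$, the claim. The main obstacle is making the single-coordinate step rigorous in the presence of $R(s)$ and with a genuinely fresh independent block of Rademachers each time; fact (i) is precisely what makes the step insensitive to $R$. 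For the fully general form as worded, where the left-hand Rademachers are replaced by a nontrivial symmetric sub-gaussian $X$, I expect the same scheme to go through with Khintchine replaced by the corresponding lower moment bound for $X$, which produces the distribution-dependent constant $C$.
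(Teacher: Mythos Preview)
The paper does not prove this lemma at all: it is quoted verbatim as Theorem~3 of \cite{maurer2016vector} and used as a black box (the only related argument in the paper is the short extension to separable Banach spaces in Lemma~\ref{lem:vector_contraction_separable}). So there is no ``paper's own proof'' to compare your attempt against. That said, your outline is essentially the proof Maurer gives in the cited paper: reduce to finite $\cS$, then replace one scalar Rademacher $\epsilon_i$ at a time by a fresh vector block $\epsilon_{i\cdot}$, with the sharp Khintchine inequality $\bE_\epsilon|\ip{\epsilon,v}|\ge\norm{v}/\sqrt2$ supplying the constant.

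There is, however, a genuine slip in your single-coordinate step. After writing
\[
\bE_{\epsilon_i}\sup_s[\epsilon_i h_i(s)+R(s)]=\tfrac12\sup_{s,s'}\bigl[h_i(s)-h_i(s')+R(s)+R(s')\bigr],
\]
the matching identity on the vector side is
\[
\bE_{\epsilon_{i\cdot}}\sup_s\bigl[\sqrt2 L\ip{\epsilon_{i\cdot},f_i(s)}+R(s)\bigr]
=\tfrac12\,\bE_{\epsilon_{i\cdot}}\sup_{s,s'}\bigl[\sqrt2 L\ip{\epsilon_{i\cdot},f_i(s)-f_i(s')}+R(s)+R(s')\bigr].
\]
Fixing the maximizing pair $(s^*,s'^*)$ of the left-hand side (with $h_i(s^*)\ge h_i(s'^*)$) and then, on the right, bounding the sup below by the maximum over the two orderings $(s^*,s'^*)$ and $(s'^*,s^*)$ gives
\[
\tfrac12\Bigl(\sqrt2 L\,\bE_{\epsilon_{i\cdot}}\bigl|\ip{\epsilon_{i\cdot},f_i(s^*)-f_i(s'^*)}\bigr|+R(s^*)+R(s'^*)\Bigr),
\]
and Khintchine finishes the step. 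The point is that the $R$-terms enter \emph{symmetrically} as $R(s^*)+R(s'^*)$ on both sides and simply cancel; your fact~(i) with $c=R(s)-R(s')$ does not arise here and does not by itself yield the displayed conclusion. Once this is corrected, the iteration over $i=1,\dots,n$ goes through exactly as you describe.
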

We prove Lemma~\ref{lem:vector_contraction_separable} by generalizing this result to any subset of separable Banach space.

\begin{proof}[Proof of Lemma~\ref{lem:vector_contraction_separable}]
	First note that a subset of a separable subspace is separable, and has a dense countable subset; lets call this $\cS_0$.
	Now note that if the Lipschitz condition holds for $s,s'\in \cS$, then it also holds for $s,s'\in \cS_0$.
	Now applying Lemma~\ref{lem:maurer}, we see that 
	\bas{
		\bE \sup_{s\in \cS_0}\sum_i \epsilon_i h_i(s)\le \bE \sqrt{2}L \sup_{s\in \cS_0}\sum_{i,k}\epsilon_{ik}f_i(s)_k,
	}
All we need to prove is that the two supremas over $\cS_0$ on the LHS and RHS of the above equation can be replaced by supremum over $\cS$. We will only show this for the LHS. The argument for the RHS is identical. In order to show this, we need to also make sure that $g(s):=\sum_i \epsilon_ih_i(s)$ over $\cS$ is measurable. We show this using standard tools from measure theory.  

We want to show that:
\ba{\label{eq:sup-countable}
\sup_{s\in\cS}g(s)=\sup_{s\in\cS_0}g(s).
}
Since $g(s)$ is continuous, its also measurable for all $s\in \cS$. The above statement, once proven, essentially implies that the sup over $\cS$ is the same as the sup over a countable set $\cS_0$. Since pointwise sup over measurable functions is measurable, we are done. We now prove Eq.~\eqref{eq:sup-countable}. It is clear that,
$\sup_{s\in\cS}g(s)\geq \sup_{s\in\cS_0}g(s).$
So all we need is to prove that for all $\epsilon>0$.
\ba{\label{eq:reverse}
\sup_{s\in\cS}g(s)\leq\sup_{s\in\cS_0}g(s)+\epsilon}
Since $g(s)$ is continuous, let $D_1(s)=\{s'\in \cS:|g(s)-g(s')|\leq \epsilon\}$. Furthermore, since $\cS_0$ is dense in $\cS$, we also have $D_2(s,\epsilon):=D_1(s)\cap \cS_0\neq \phi$. So for each $s\in \cS$, and $\epsilon>0$, $\exists s'\in \cS_0$ (to be precise, $s'\in D_2(s,\epsilon)$)
such that $g(s)\leq g(s')+\epsilon$. Taking a sup over the LHS over $\cS$ and a sup of RHS over $\cS_0$, we get Eq.~\eqref{eq:reverse}. This completes the proof.
\end{proof}

\begin{proof}[Proof of Proposition~\ref{prop:rademacher_bound}]
For any unit vector $u$, the Rademacher complexity of $\cF$ is
\beq{
\bsplt{
R_n(\cF) =& \bE_X \bE_\epsilon \sup_{\bmu\in \bA} \frac{1}{n}\sum_{i=1}^n \epsilon_i w_1(X_i;\bmu)\ip{X_i-\bmu_1,u}\\
\le & \underbrace{\bE_X \bE_\epsilon \sup_{\bmu\in \bA}\frac{1}{n}\sum_{i=1}^n \epsilon_i w_1(X_i;\bmu)\ip{X_i,u}}_{(D)} +  \underbrace{\bE_X \bE_\epsilon \sup_{\bmu\in \bA} \frac{1}{n}\sum_{i=1}^n \epsilon_iw_1(X_i;\bmu)\ip{\bmu_1,u}}_{(E)}
}
\label{eq:mgf_cauchy_schwarz}
}
We bound the two terms separately. 
Define $\eta_j(\bmu): \mathbb{R}^{Md}\to \mathbb{R}^{M}$ to be a vector valued function with the $k$-th coordinate  
\bas{
[\eta_j(\bmu)]_k=\frac{\norm{\bmu_1}^2}{2}-\frac{\norm{\bmu_k}^2}{2}+\langle X_j,\bmu_k-\bmu_1\rangle+\log\left(\frac{\pi_k}{\pi_1}\right)
}
We claim
\beq{
|w_1(X_j;\bmu)-w_1(X_j;\bmu')|\leq \frac{\sqrt{M}}{4}\norm{\eta_j(\bmu)-\eta_j(\bmu')}
\label{eq:lipschitz}
}
This vectorized Lipschitz condition simply follows from the fact that 
\bas{
w_1(X_j,\bmu)= &\frac{1}{1+\sum_{k=2}^{M}\exp([\eta_j(\bmu)]_k)}\\
\frac{\partial w_1(X_j,\bmu)}{\partial [\eta_j(\bmu)]_k}= &\frac{\exp([\eta_j(\bmu)]_k)}{(1+\sum_{k=2}^{M}\exp([\eta_j(\bmu)]_k))^2} \leq \frac{1}{4}
}
so $w_1(X_j,\bmu)$ is $\frac{1}{4}$-Lipschitz continuous w.r.t. $[\eta_j(\bmu)]_k$. By Lemma \ref{lem:vec_Lipschitz}, $w_1(X_j,\bmu)$ is $\frac{\sqrt{M}}{4}$ Lipschitz w.r.t $\eta_j(\bmu)$.
Now let $\psi_j(\bmu)=w_1(X_j;\bmu)\langle X_j,u \rangle$. 

With Lipschitz property \eqref{eq:lipschitz} and by Lemma~\ref{lem:vector_contraction}, 
we have
\beq{
\bsplt{
&\bE \left[{\sup_{\bmu \in \mathbb{A}}\frac{1}{n}\sum_{j=1}^n\epsilon_j w_1(X_j;\bmu)\langle X_j,u \rangle}\right] \le 
\bE \left[ \frac{1}{n}\sup_{\bmu \in \mathbb{A}}\sum_{j=1}^n\sum_{k=1}^M \epsilon_{jk}[\eta_j(\bmu)]_k \frac{\sqrt{2M}}{4} \langle X_j,u \rangle \right]\\
= & \bE \left[\frac{\sqrt{2}M^{\frac{1}{2}}}{4n}\sup_{\bmu \in \mathbb{A}}\sum_{j=1}^n\sum_{k=2}^M \epsilon_{jk} \left( \frac{\norm{\bmu_1}^2}{2}-\frac{\norm{\bmu_k}^2}{2}+\langle X_j,\bmu_k-\bmu_1\rangle+ \log(\frac{\pi_k}{\pi_1}) \right) \langle X_j,u \rangle \right]\\
\leq & \underbrace{\bE \left[ \frac{\sqrt{2M}}{4n} \sup_{\bmu \in \mathbb{A}}\sum_{j=1}^n\sum_{k=1}^M \epsilon_{jk} \left( \frac{\norm{\bmu_1}^2}{2}-\frac{\norm{\bmu_k}^2}{2}+  \log(\frac{\pi_k}{\pi_1}) \right) \langle X_j,u \rangle \right]}_{(D.1)}\\
&\qquad\qquad\qquad\qquad +\underbrace{\bE \left[ \frac{\sqrt{2M}}{4n} \sup_{\bmu \in \mathbb{A}}\sum_{j=1}^n\sum_{k=1}^M \epsilon_{jk}  \langle X_j,\bmu_k-\bmu_1\rangle \langle X_j,u \rangle  \right]}_{(D.2)}
}
\label{eq:EewtimesX}
}
To bound (D.1), note that the sum over $k=1,\cdots,M$ can be considered as an inner product of two vectors in $\bR^M$. The supremum of $\|\bmu\|$ can be bounded as $\max_{\bmu\in \bA} \|\bmu_i\| \le \|\bmu^*_{\max}\|+a \le \frac{3}{2}\rmax$.
\ba{
(D.1) =& \bE \left[ \frac{\sqrt{2M}}{4} \sup_{\bmu \in \mathbb{A}} 
\begin{pmatrix} 
\frac{\norm{\bmu_1}^2}{2}-\frac{\norm{\bmu_1}^2}{2}+\log(\frac{\pi_1}{\pi_1})\\
\vdots\\
\frac{\norm{\bmu_1}^2}{2}-\frac{\norm{\bmu_M}^2}{2}+\log(\frac{\pi_M}{\pi_1})
\end{pmatrix}^T
\begin{pmatrix}
\frac{1}{n}\sum_{j=1}^n \epsilon_{j1} \ip{X_j,u } \\
\vdots\\
\frac{1}{n}\sum_{j=1}^n \epsilon_{jM} \ip{X_j,u } 
\end{pmatrix} \right]\nonumber\\
\le & cM(9\rmax^2/4+\log(\kappa))\bE \left\| \begin{pmatrix}
\frac{1}{n}\sum_{j=1}^n \epsilon_{j1} \ip{X_j,u } \\
\vdots\\
\frac{1}{n}\sum_{j=1}^n \epsilon_{jM} \ip{X_j,u } 
\end{pmatrix} \right\|\label{eq:D.1}
}
By Lemma~\ref{lem:mixture_sub_gaussian}, and $\|u\|= 1$, we know $\langle X_j,u \rangle$ is sub-Gaussian with parameter upper bounded by $1+\rmax$. So each element of the vector in Equation~\ref{eq:D.1} is the average of $n$ independent mean 0 sub-Gaussian random variables with sub-gaussian norm upper bounded by $1+\rmax$ (since w.l.o.g we have assumed that $\sigma=1$ and $\max_i\|\mu\|\leq \rmax$, by Lemma~\ref{lem:rmax_mumax}). 
Consequently, $\forall k\in[M]$, $\bE\left| \frac{1}{n}\sum_{j=1}^n \epsilon_{jk} \ip{X_j,u_1}\right| \le c(1+\rmax)/\sqrt{n}$ for some global constant $c$~\cite{vershynin2010introduction}, and
\bas{
(D.1) \leq cM^{3/2}(9\rmax^2/4+\log(\kappa))(1+\rmax)\frac{1}{\sqrt{n}} \le cM^{3/2}(1+\rmax)^3\max\{1,\log(\kappa)\}\frac{1}{\sqrt{n}}
}
On the other hand, for $(D.2)$, we have
\beq{
\bsplt{
(D.2) =& \bE \left[ \frac{\sqrt{2M}}{4n} \sup_{\bmu \in \mathbb{A}}\sum_{j=1}^n\sum_{k=1}^M \epsilon_{jk}  \langle X_j,\bmu_k-\bmu_1\rangle \langle X_j,u \rangle  \right]\\
&= \bE \left[ \frac{\sqrt{2M}}{4n} \sup_{\bmu \in \mathbb{A}}\sum_{k=1}^M (\bmu_k-\bmu_1)^T\left(\sum_{j=1}^n \epsilon_{jk}  X_jX_j^T\right)u  \right]\\
\le & \sum_{k=1}^M \bE \left[ \frac{\sqrt{2M}}{4} \sup_{\bmu \in \mathbb{A}} \|\bmu_k-\bmu_1\| \left\|\frac{1}{n}\sum_{j=1}^n \epsilon_{jk}  X_j X_j^T \right\|_{op}  \right] \\
\le&\sum_{k=1}^M  \frac{\sqrt{2M}}{2} \|\bmu_{\max}\| \bE \left[ \left\|\frac{1}{n}\sum_{j=1}^n \epsilon_{jk}  X_j X_j^T\right\|_{op}  \right] 
}
\label{eq:d_2}
}
For each $k\in[M]$, the operator norm $\|\frac{1}{n}\sum_{j=1}^n \epsilon_{jk}  X_jX_j^T\|_{op}$ can be bounded by the same discretization technique with the $1/2$-covering of the unit sphere. To be specific, since for any matrix $A$,
$\|A\|_{op} = \sup_{u\in \cS^{d-1}} \|Au\|$,
\bas{
\forall u, \exists u_j\ s.t.\  &\|Au\| \le \|Au_j\|+\|A\|_{op}\|u-u_j\|\le \max_j \|Au_j\|+\frac{1}{2}\|A\|_{op}
}
Taking $\sup_{u\in \cS^{d-1}}$ on the left side, we get $\|A\|_{op} \le 2\max_j \|Au_j\|.$
Therefore $\|\frac{1}{n}\sum_{j=1}^n\epsilon_{jk}  X_jX_j^T\|_{op} \le 2\max_\ell \frac{1}{n}\sum_{j=1}^n\epsilon_{jk}  \ip{X_j,u_\ell}^2$.
The square of sub-gaussian random variable $\ip{X_j,u_\ell}$ is sub-exponential, from Lemma 5.14 in \cite{vershynin2010introduction} we know
\bas{
\bE \left[\exp\left( \frac{1}{n}\sum_{j=1}^n \epsilon_{jk} \ip{X_j,u} ^2 t\right) \right] \le & \exp\left( \frac{c_4t^2 (1+\rmax)^4}{n} \right)
}
With the 1/2-covering number of $\cS^{d-1}$ bounded by $\exp(2d)$, we have
\bas{
\bE \left[\exp\left( t\cdot \|\frac{1}{n}\sum_{j=1}^n \epsilon_{jk}  X_jX_j^T\|_{op}\right) \right] \le & \exp\left( 2d+\frac{c_5t^2 (1+\rmax)^4}{n} \right)
}
Hence,
\bas{
\bE \left[ \left\|\frac{1}{n}\sum_{j=1}^n \epsilon_{jk}  X_j X_j^T\right\|_{op}  \right] = & \frac{1}{t}\log\left( \exp\left( t\bE  \left[ \left\|\frac{1}{n}\sum_{j=1}^n \epsilon_{jk}  X_j X_j^T\right\|_{op}  \right]\right) \right), \quad \forall t>0\\
\le& \frac{1}{t}\log\left( \bE  \left[ \exp \left( t  \left\|\frac{1}{n}\sum_{j=1}^n \epsilon_{jk}  X_j X_j^T\right\|_{op}  \right)\right] \right)\\
\le & \frac{2d}{t}+\frac{ct(1+\rmax)^4}{n}
}
Taking $t=c\frac{\sqrt{nd}}{(1+\rmax)^2}$,
\bas{
\bE \left[ \left\|\frac{1}{n}\sum_{j=1}^n \epsilon_{jk}  X_j X_j^T\right\|_{op}  \right] \le c\sqrt{\frac{d}{n}}(1+\rmax)^2
}
Plugging back to Eq.~\eqref{eq:d_2}, and use $\sup_{\bmu\in \bA}\|\bmu\| \le \sup_k \|\bmu^*_k\|+a \le \frac{3}{2}\rmax$, we have
\bas{
(D.2) \le & \frac{cM(1+\rmax)^3\sqrt{d}}{\sqrt{n}}
}

Plugging the bound back to Eq.~\eqref{eq:EewtimesX}, we have
\bas{
(D) \le&\frac{c M^{3/2}(1+\rmax)^3\sqrt{d}\max\{1,\log(\kappa)\}}{\sqrt{n}}
}

Apply Lemma~\ref{lem:vector_contraction} on the $(E)$ term in Eq.~\eqref{eq:mgf_cauchy_schwarz}, we have
\bas{
(E)=&\bE \left[{\sup_{\bmu \in \mathbb{A}}\frac{1}{n}\sum_{j=1}^n\epsilon_j w_i(X_j;\bmu)\langle \bmu_i,u\rangle}\right] \\
\leq& \bE\left[\frac{\sqrt{2M}}{4n}\sup_{\bmu \in \mathbb{A}}\sum_{j=1}^n\sum_{k=1}^M \epsilon_{jk} \left( \frac{\norm{\bmu_1}^2}{2}-\frac{\norm{\bmu_k}^2}{2}+\langle X_j,\bmu_k-\bmu_1\rangle+\log(\frac{\pi_k}{\pi_1}) \right) \langle \bmu_i,u \rangle
\right]\\
\le & \underbrace{\frac{\sqrt{2M}}{4}\bE_\epsilon\left[ \sup_{\bmu\in \bA} \frac{1}{n}\sum_{j=1}^n\sum_{k=1}^M \epsilon_{jk} \left( \frac{\norm{\bmu_1}^2}{2}-\frac{\norm{\bmu_k}^2}{2}+\log\frac{\pi_k}{\pi_1}\right) \ip{\bmu_i,u} \right]}_{E.1} \\
&\qquad\qquad\qquad + \underbrace{\frac{\sqrt{2M}}{4}\bE_{X,\epsilon}\left[ \sup_{\bmu\in \bA}\frac{1}{n}\sum_{j=1}^n\sum_{k=1}^M \epsilon_{jk} \ip{X_j,\bmu_k-\bmu_1}\ip{\bmu_i, u }\right]}_{E.2}
}
We will now bound $(E.1)$ and $(E.2)$.
\ba{
(E.1) &\leq \frac{\sqrt{2M}}{4}\bE_\epsilon\left[ \sup_{\bmu\in \bA} \frac{1}{n}\sum_{j=1}^n\sum_{k=1}^M \epsilon_{jk} \left( \frac{\norm{\bmu_1}^2}{2}-\frac{\norm{\bmu_k}^2}{2}+\log\frac{\pi_k}{\pi_1} \right) \sup_{\bmu\in \bA}\ip{\bmu_i,u} \right]\nonumber\\
&\leq \frac{\sqrt{2M}}{4}\rmax\bE_\epsilon\left[ \sup_{\bmu\in \bA} \begin{pmatrix} 
	\frac{\norm{\bmu_1}^2}{2}-\frac{\norm{\bmu_1}^2}{2}+\log(\frac{\pi_1}{\pi_1})\\
	\vdots\\
	\frac{\norm{\bmu_1}^2}{2}-\frac{\norm{\bmu_M}^2}{2}+\log(\frac{\pi_M}{\pi_1})
\end{pmatrix}^T
\begin{pmatrix}
	\frac{1}{n}\sum_{j=1}^n \epsilon_{j1}  \\
	\vdots\\
	\frac{1}{n}\sum_{j=1}^n \epsilon_{jM}  
\end{pmatrix} \right]\nonumber\\
&\leq cM\rmax(9\rmax^2/4+\log\kappa)E_\epsilon\left\|\begin{pmatrix}
	\frac{1}{n}\sum_{j=1}^n \epsilon_{j1}  \\
	\vdots\\
	\frac{1}{n}\sum_{j=1}^n \epsilon_{jM}  
\end{pmatrix} \right\|\label{eq:E.1}
}
Note that each element of the vector in Equation~\ref{eq:E.1} is the average of $n$ i.i.d mean 0 Radamacher random variables, which are essentially sub-gaussian radnom variables with subgaussian norm upper bounded by $1$. 
Consequently, $\forall k\in[M]$, $\bE\left| \frac{1}{n}\sum_{j=1}^n \epsilon_{jk} \right| \le c'/\sqrt{n}$ for some global constant $c$~\cite{vershynin2010introduction}, and
\bas{
	(E.1)\leq c'M^{3/2}\rmax(9\rmax^2/4+\log\kappa)/\sqrt{n}
}
As for (E.2), we have
\ba{
(E.2)&\leq \frac{\sqrt{2M}}{4}\bE_{X,\epsilon}\left[ \sup_{\bmu\in \bA} \frac{1}{n}\sum_{j=1}^n\sum_{k=1}^M \epsilon_{jk} \langle X_j,\bmu_k-\bmu_1\rangle \sup_{\bmu\in \bA}\ip{\bmu_i,u} \right]\nonumber\\
&\leq \frac{3\sqrt{2M}}{8}\rmax \bE_{X,\epsilon}\left[ \sup_{\bmu\in \bA} \sum_{k=1}^M  (\bmu_k-\bmu_1)^T\left(\frac{1}{n}\sum_{j=1}^n \epsilon_{jk}X_j\right) \right]\nonumber\\
&\leq \frac{3\sqrt{2M}}{8}\rmax  \sum_{k=1}^M\bE_{X,\epsilon}\left[  \sup_{\bmu\in \bA} (\bmu_k-\bmu_1)^T\left(\frac{1}{n}\sum_{j=1}^n \epsilon_{jk}X_j \right)\right]\nonumber\\
&\leq \frac{9\sqrt{2M}}{8}\rmax^2 \sum_{k=1}^M\bE_{X,\epsilon} \left\|\frac{1}{n}\sum_{j=1}^n \epsilon_{jk}X_j \right\|\label{eq:E.2}
}
In Eq~\eqref{eq:E.2}, the vector $\frac{1}{n}\sum_{j=1}^n \epsilon_{jk}X_j$ is the average of $n$ independent mean zero isotropic subgaussian random vectors. Another using of the discretizing technique along with the moment generating function with $t\geq 0$ gives:
\bas{
\left\|\frac{1}{n}\sum_{j=1}^n \epsilon_{jk}X_j \right\|&\leq 2\max_\ell\langle\frac{1}{n}\sum_{j=1}^n \epsilon_{jk}X_j,u_\ell\rangle	\\
E\left[\exp \left( t\left\|\frac{1}{n}\sum_{j=1}^n \epsilon_{jk}X_j \right\| \right)\right]
&\leq \sum_{\ell}E\left[\exp \left(2\frac{t}{n}\sum_{j=1}^n \epsilon_{jk}\langle X_j,u_\ell\rangle\right) \right]\leq  \exp\left(2d+\frac{c'(1+\rmax)^2t^2}{n}\right)\\
 E\left\|\frac{1}{n}\sum_{j=1}^n \epsilon_{jk}X_j \right\|&\leq  \frac{c''+2d+\frac{c'(1+\rmax)^2t^2}{n}}{t} \qquad \text{Using Jensen's inequality}
}
 Taking $t=\Theta \left(\sqrt{nd}/(1+\rmax) \right)$,
 \bas{
(E.2) \leq c M^{3/2}\rmax^2 (1+\rmax)\sqrt{d}/\sqrt{n}
}
Thus, combing (E.1) and (E.2) we get:
\bas{
(E)\le &  \frac{c M^{3/2}(1+\rmax)^3\max\{1,\log(\kappa)\}\sqrt{d}}{\sqrt{n}}
}
The final bound follows by combining (D) and (E):
\bas{
R_n(\cF) \le \frac{c M^{3/2}(1+\rmax)^3\sqrt{d}\max\{1,\log(\kappa)\}}{\sqrt{n}}
}
\end{proof}
For proving Lemma~\ref{lem:zuj_minus_ezuj} we first recall the following symmetrization lemma in learning theory.
\begin{lemma}[See e.g. \cite{mohri2012foundations}]
Let $\mathcal{F}$ be a function class with domain X. Let $\{X_1,X_2,\cdots,X_n\}$ be a set of sample generated by a distribution $\mathbb{P}$ on X. Assume $\sigma_i$ are $i.i.d.$ Rademacher variables, then 
\bas{
\bE \left( \sup_{f \in \mathcal{F}}(\bE f-\frac{1}{n}\sum_{i=1}^{n}f(X_i))\right) \leq 2R_n(\mathcal{F})
}
Here $R_n(\mathcal{F})=\bE\left[\sup_{f \in \mathcal{F}}|\frac{1}{n}\sum_{i=1}^{n} \sigma_if(X_i)\right]$ is the Rademacher complexity.
\label{lem:symmetrization}
\end{lemma}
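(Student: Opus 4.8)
The plan is to establish this classical symmetrization bound by the standard ``ghost sample'' argument. First I would introduce an independent copy $X_1',\ldots,X_n'$ of the data, drawn i.i.d.\ from $\mathbb{P}$ and independent of $X_1,\ldots,X_n$ and of the Rademacher variables. Since $\bE f=\bE_{X'}\big[\frac1n\sum_{i=1}^n f(X_i')\big]$ for every $f\in\cF$, and since for fixed $X$ the map $X'\mapsto\sup_{f\in\cF}\frac1n\sum_i\big(f(X_i')-f(X_i)\big)$ is a pointwise supremum of affine functions and hence convex in $X'$, Jensen's inequality yields
\bas{
\bE_X\sup_{f\in\cF}\Big(\bE f-\frac1n\sum_{i=1}^n f(X_i)\Big)
\le \bE_{X,X'}\sup_{f\in\cF}\frac1n\sum_{i=1}^n\big(f(X_i')-f(X_i)\big).
}

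Second, I would symmetrize: because $X_i$ and $X_i'$ are i.i.d., for any fixed sign vector $(\sigma_1,\ldots,\sigma_n)\in\{\pm1\}^n$, interchanging $X_i\leftrightarrow X_i'$ in exactly the coordinates with $\sigma_i=-1$ is a measure-preserving transformation of the product space and does not change the joint law of the relevant quantities over $\cF$. Hence averaging over i.i.d.\ Rademacher signs independent of $(X,X')$ leaves the right-hand side above unchanged, so it equals $\bE_{X,X',\sigma}\sup_{f\in\cF}\frac1n\sum_i\sigma_i\big(f(X_i')-f(X_i)\big)$. Splitting the supremum of the difference into a sum of two suprema, using linearity of expectation, the fact that $-\sigma_i$ has the same law as $\sigma_i$, and $\sup_f(\cdot)\le\sup_f|\cdot|$, each of the two resulting terms is bounded by $\bE\sup_{f\in\cF}\big|\frac1n\sum_i\sigma_i f(X_i)\big|=R_n(\cF)$; this produces the factor of $2$ and finishes the argument.

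Since this is a textbook result (see e.g.\ \cite{mohri2012foundations}), there is no genuine obstacle here; the only points that warrant a word of justification are the Jensen step---legitimate precisely because the inner supremum is convex in the ghost sample---and the measurability of the suprema over $\cF$, which I would simply take for granted, as is customary in this literature.
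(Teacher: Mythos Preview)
Your argument is the standard and correct ghost-sample symmetrization proof. The paper itself does not prove this lemma at all: it is merely quoted as a known result from \cite{mohri2012foundations} and then invoked, so there is no ``paper's own proof'' to compare against.

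One small remark on phrasing: your justification of the Jensen step as ``convex in $X'$'' is slightly off. The functions $x'\mapsto f(x_i')$ are in general not affine in the sample point. The clean justification is simply that for each fixed $f$ and fixed $X$,
\[
\bE_{X'}\!\left[\frac1n\sum_i f(X_i')-\frac1n\sum_i f(X_i)\right]
\le
\bE_{X'}\!\left[\sup_{g\in\cF}\frac1n\sum_i\big(g(X_i')-g(X_i)\big)\right],
\]
and then one takes $\sup_f$ on the left. Equivalently, this is Jensen applied to the convex map $y\mapsto\sup_f y_f$ on the vector $\big(\frac1n\sum_i f(X_i')-\frac1n\sum_i f(X_i)\big)_{f\in\cF}$, not convexity in the sample point itself. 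This is purely a wording issue; the step is valid and the remainder of your proof is fine.
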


\begin{proof}[Proof of Lemma \ref{lem:zuj_minus_ezuj}]
Consider for some $r>0$, the set in which $X$ lies in the $r$-ball of its corresponding center. If $Z_{i}$ denotes the hidden cluster assignment of $X_i$, we denote $\cY_r^i=\{X_1,\cdots , X_n: \|X_i-\bmu^*_{Z_{i}}\| \le r\}$.
\bas{
\cY_r := \{X_1,\dots X_n:\|X_i-\bmu^*_{Z_{i}}\| \le r,\ , \forall i\in[n] \}=\cap_i \cY_r^i
}
By Lemma \ref{lem:norm_of_gaussian} and union bound,  for $r=\Omega(\sqrt{d})$, 
\ba{\label{eq:badprob}
	p:=P(\bX \not\in \cY_r)\le \sum_{i=1}^n P(X\in (\cY_r^{i})^c )\le cn\exp\left(-\half{r\sqrt{d}}\right). 
}

Let $m_r := \bE[g(X)|X\in \cY_r]$, we want to show $m_r$ is close to $\bE[g(\bX)]$ and is close to $g(\bX)$ with high probability. 

Let $\bX$ and $\bX'$ be two samples which only differ on one data-point, then 
\bas{
g(\bX)-g(\bX') =& \sup_{\bmu\in \bA} \left(\frac{1}{n}\sum_{i=1}^n w_1(X_i;\bmu)\ip{X_i-\bmu_1,u}-\bE_X w_1(X;\bmu)\ip{X-\bmu_1,u}\right) \\
&\qquad \qquad - \sup_{\bmu\in \bA} \left(\frac{1}{n}\sum_{i=1}^n w_1(X'_i;\bmu)\ip{X'_i-\bmu_1,u}-\bE_X w_1(X';\bmu)\ip{X'-\bmu_1,u}\right)
}
Assume $\tilde{\bmu}$ be the maximizer for the supremum of $X$, then
\bas{
g(\bX)-g(\bX') 
\stackrel{(i)}{\le} & \frac{1}{n}(\sum_{i=1}^n w_1(X_i;\tilde{\bmu})\ip{X_i-\tilde{\bmu}_1,u}-\bE w_1(X;\tilde{\bmu})\ip{X-\tilde{\bmu}_1,u}) \\
&\qquad \qquad -  \frac{1}{n}\sum_{i=1}^n (w_1(X'_i;\tilde{\bmu})\ip{X'_i-\tilde{\bmu}_1,u}-\bE w_1(X';\tilde{\bmu})\ip{X'-\tilde{\bmu}_1,u})\\
= & \frac{1}{n}w_1(X_i;\tilde{\bmu})\ip{X_i-\tilde{\bmu}_1,u}-w_1(X'_i;\tilde{\bmu})\ip{X'_i-\tilde{\bmu}_1,u}
}
where (i) is by definition of supremum.
The inequality holds when we change the order of $X$ and $X'$, hence for $X,X'\in \cY_r$,
\bas{
|g(\bX)-g(\bX')| \le& \frac{1}{n}|w_1(X_i;\tilde{\bmu})\ip{X_i-\tilde{\bmu}_1,u}-w_1(X'_i;\tilde{\bmu})\ip{X'_i-\tilde{\bmu}_1,u}|\\
\le & \frac{2}{n} \sup_{\bmu\in \bA, X\in \cY_r}|w_1(X_i;\bmu)\ip{X_i-\bmu_1,u}|\\
\le & \frac{2}{n}\sup_{X\in \cY_r}(\|X-\bmu^*_{Z_X}\|+\rmax)\\
\le & \frac{2(r+\rmax)}{n}:=L
}

By Theorem \ref{th:extension_mcdiarmid}, we have
\ba{
P(g(\bX)-m_r \ge \epsilon) 
\le & p+\exp\left( -2\frac{(\epsilon-nLp)_+^2}{nL^2} \right)\nonumber\\
\le& c_1n\exp(-c\sqrt{d}r)+\exp\left( -2\frac{(\epsilon-nL\cdot c_1n\exp(-c\sqrt{d}r))_+^2}{nL^2} \right)\nonumber\\
=&\underbrace{cn\exp(-c\sqrt{d}r)}_{P_1}+\underbrace{\exp\left( -\frac{c_1n(\epsilon-c_2n(r+\rmax)\exp(-c\sqrt{d}r))_+^2}{(r+\rmax)^2} \right)}_{P_2}\label{eq:Mcdiarmid}
}
Let $r=\Theta((1+\rmax)\log^2(n)\sqrt{d})$ and $\epsilon=c_0 (1+\rmax)d\log^{5/2}(n)/\sqrt{n}$.
Since, for large $n$,
\bas{
	n(r+\rmax)\exp(-cr\sqrt{d})
	&\leq c_2(1+\rmax)\exp(\log n+\log\log n-c\log^2 n)= o((1+\rmax)/\sqrt{n})
}
for some constant $c_0$, which yields for large $n$, $(\epsilon-c_2n(r+\rmax)\exp(-cr\sqrt{d}))_+\geq \epsilon/2$. Finally, for large $n$, we can have the following bounds on $P_1$ and $P_2$.
\ba{
P_1 &= O\left(\exp(\log n-c(1+\rmax)^2(\log n)^2)\right)=O\left(\exp(-c'(1+\rmax)^2d\log n)\right)\nonumber\\
P_2 &\leq \exp\left(-\frac{cn\epsilon^2}{d(\log^2 n(1+\rmax))^2}\right) = O\left(\exp(-c''d \log n)\right)\label{eq:P12}
}
where $c,c',c'''$ are some global constants. The last line uses the fact $r+\rmax = O(\sqrt{d}(1+\rmax)\log^2 n)$.

Now we bound the difference between $\bE g(X)$ and the conditional expectation $m_r$.  
By the total expectation theorem,
\beq{
\bsplt{
&\bE g(\bX) = m_r P(\bX\in \cY_r)+\bE[g(\bX)1(\bX\not\in \cY_r)]\\
& \bE [g(\bX)] ( P(\bX\in \cY_r) + P(\bX\not\in \cY_r)) = m_r P(\bX\in \cY_r)+\bE[g(\bX)1(\bX\not\in \cY_r)]\\
& \bE g(\bX)-m_r = \frac{ \bE[g(\bX)1(\bX\not\in\cY_r)]-\bE[g(\bX)]P(X\not\in \cY_r) }{P(\bX\in \cY_r)}\\
\Rightarrow &|m_r-\bE g(\bX)|\le \frac{p |\bE g(\bX)| +|\bE[g(\bX)1(\bX\not\in \cY_r)]| }{1-p}
}
\label{eq:conditional_exp}
}
$p$ is defined in Eq~\eqref{eq:badprob}. Note that by Proposition \ref{prop:rademacher_bound}, and the symmetrization result Lemma~\ref{lem:symmetrization}, $\bE g(X) \le 2R_n(\cF) \le c n^{-1/2}M^3\sqrt{d}(1+\rmax)^3\max\{1,\log(\kappa)\}$. On the other hand, as $g(\bX)$ is the sup over a class of quantity, which is centered at zero. So $g(X)\ge 0$. We also have $1(\bX\in \cup_i (\cY_r^i)^c)\le \sum_{i=1}^n 1(\bX\in (\cY_r^i)^c)$. Hence,
\bas{
&\bE[g(\bX)1(\bX\not\in \cY_r)] = \bE[g(\bX)  \sum_{i=1}^n  1(\bX\in (\cY_r^i)^c)] \le  \sum_{i=1}^n   \bE[g(\bX)1(\bX\in (\cY_r^i)^c)] }

Note for each sample $X_i$ and $\bmu$, $\left| \sup_{\bmu\in \bA} w_1(X_i; \bmu)\ip{X_i- \bmu_1,u} \right| \le \sup_{\bmu\in \bA} w_1(X_i; \bmu) \|X_i-\bmu^*_{Z_{i}}\|+\|\bmu^*_{Z_{i}}-\bmu_1\| \le \|X_i-\bmu^*_{Z_{i}}\| + 2\rmax$. Thus,
\bas{
|g(\bX)| =& | \sup_{\bmu\in \bA} \frac{1}{n}\sum_{j=1}^n w_1(X_j;\bmu)\ip{X_j-\bmu_1,u}-\bE_X w_1(X;\bmu)\ip{X-\bmu_1,u}|\\
\le &  \frac{1}{n}\sum_{j=1}^n (\|X_j-\bmu^*_{Z_{j}}\|+2\rmax) + \bE_X \|X - \bmu^*_{Z_X}\|+2\rmax \\
\le & \frac{1}{n} \sum_{j=1}^n \|X_j-\bmu^*_{Z_{j}}\| + \bE_X \|X - \bmu^*_{Z_X}\| +4\rmax
}
Therefore we have,
\beq{
\bsplt{
&\bE[g(\bX)1(\bX\not\in \cY_r)] \le  \sum_{i=1}^n   \bE_{\bX}[ ( \frac{1}{n} \sum_{j=1}^n \|X_j-\bmu^*_{Z_{j}}\| + \bE_X \|X - \bmu^*_{Z_X}\| + 4\rmax) 1(\bX\in (\cY_r^i)^c)]\\
\le & \sum_{i=1}^n   \bE_{\bX}[ ( \frac{1}{n} \sum_{j=1}^n \|X_j-\bmu^*_{Z_{j}}\| 1(\bX\in (\cY_r^i)^c)] ] + (\bE_X \|X - \bmu^*_{Z_X}\| + 4\rmax)P(\bX\in (\cY_r^i)^c)]\\
\le &  \sum_{i=1}^n  \frac{1}{n} \sum_{j=1}^n\bE_{\bX}[\|X_j-\bmu^*_{Z_{j}}\| 1(\bX\in (\cY_r^i)^c)]+  c'(\rmax+d)p
}
\label{eq:tail_exp_for_g}
}
where the last inequality follows from Lemma~\ref{lem:normal_norm}.
Note that when $j\ne i$, the expectation factors due to independence of the sample points and by Lemma \ref{lem:norm_of_gaussian},
\bas{
\bE_{\bX}[\|X_j-\bmu^*_{Z_{j}}\| 1(\bX\in (\cY_r^i)^c)] = \bE_{X_j}\|X_j-\bmu^*_{Z_{j}}\|\cdot  P(\|X_i-\bmu^*_{Z_i}\| \ge r) \le  cde^{-\half{r\sqrt{d}}}
}
When $j=i$, from Lemma~\ref{lem:pol_exp_tail_bound},
\bas{
\bE_{\bX}[\|X_j-\bmu^*_{Z_{j}}\| 1(\bX\in (\cY_r^i)^c)]  \le&  cn\int_{v=r}^\infty v \cdot v^{d-1}(v+\rmax+a)\exp(-v^2/2)dv\cdot \frac{2\pi^{d/2}}{\gamfun{\half{d}}}\\
\le & c_1d\exp\left( -\frac{r\sqrt{d}}{2}\right)
}
Putting back to Eq.~\eqref{eq:tail_exp_for_g}, we have
\bas{
\bE[g(\bX)1(\bX\not\in \cY_r)] \le&  c_1nd\exp\left(-\half{r\sqrt{d}}\right)+c_2 d \exp\left(- \frac{r\sqrt{d}}{2}\right)+c_3n(\rmax+d)\exp\left(-\half{r\sqrt{d}}\right)\\
\le& cn(\rmax+d)\exp\left(- \frac{r\sqrt{d}}{2}\right)
}

Following from Eq.~\eqref{eq:conditional_exp}, we have
\ba{\label{eq:mr}
|m_r-\bE g(X)| \le &
 \frac{c_1n\exp(-\half{r}\sqrt{d})R_n(\cF)+c_2n(\rmax+d)\exp(-\half{r}\sqrt{d})}{1-c_3n\exp(-c_4r\sqrt{d})}
}
Recall that we take $r=\Theta(\sqrt{d}(1+\rmax)\log^2 n)$, for large enough $n$, we have $1-c_3n\exp(-cr\sqrt{d})\geq 1/2$, and $ne^{-cr\sqrt{d}}\leq C/n$.
Finally for the second part of the numerator in Eq.~\eqref{eq:mr} we have:
\bas{ 
	n(\rmax+d)\exp(-\sqrt{d}r/2)&\leq (\rmax+1)\exp(\log n+\log d-\Theta(d(1+\rmax)\log^2 n))\\
	&\le  C'(\rmax+1)/\sqrt{n}.
}
Eq~\eqref{eq:mr} becomes,
\ba{\label{eq:finalmr}
m_r\leq 2R_n(\cF)(1+O(1/n))+O((\rmax+1)/\sqrt{n})
}

Thus using Eqs~\eqref{eq:Mcdiarmid},~\eqref{eq:P12} and~\eqref{eq:finalmr} the final bound becomes:

\bas{
&P(g(X)\leq 2R_n(\cF)(1+O(1/n))+O((\rmax+1)/\sqrt{n})+(1+\rmax)d\sqrt{\log^5 n/n})\\
\ge & 1-P_1-P_2\\
\ge & 1-c\exp\left(-c'\min((1+\rmax)^2d\log n,d\log n)\right) \geq 1-\exp\left(-cd\log n\right)
}
Finally we have,
\bas{
	P(g(\bX) &= \tilde{O}(\max\{R_n(\cF), (1+\rmax)dlog^{5/2}(n)/\sqrt{n}\}) )\geq 1-\exp\left(-cd\log n\right)
}
\bk
\end{proof}

\begin{proof}[Proof of Theorem~\ref{th:sample_eps}]

Denote $Z_i=\sup_{\bmu \in \mathbb{A}}\norm{G^{(i)}(\bmu)-G^{(i)}_n(\bmu)}$ where 
\bas{
G(\bmu)=\begin{pmatrix} \bE w_1(X;\bmu)(X-\bmu_1)\\ 
\bE w_2(X;\bmu)(X-\bmu_2)\\
\vdots\\
\bE w_M(X;\bmu)(X-\bmu_3)
\end{pmatrix}.
}

Assume $\cS^{d-1}$ is $d$-dimensional unit sphere. Recall the definition $g_i^u(X) = \sup_{\bmu\in \mathbb{A}}\langle G^{(i)}(\bmu)-G^{(i)}_n(\bmu),u\rangle = \sup_{\bmu\in \bA} \frac{1}{n}\sum_{i=1}^n w_1(X_i;\bmu)\ip{X_i-\bmu_1,u}-\bE w_1(X;\bmu)\ip{X-\bmu_1,u}$. Then $Z_i=\sup_{u\in \cS^{d-1}} g_i^u(X)$. 
Without loss of generality, we assume $i=1$, the proof for other clusters follows similarly. 
Let $\{u^{(1)}, u^{(2)}, \cdots, u^{(K)}\}$ be a $\frac{1}{2}$-covering of the unit sphere $\cS^{d-1}$, then $\forall v \in \cS^{d-1}, \exists j\in[K],\text{ s.t. }\norm{v-u^{(j)}} \leq \frac{1}{2}$. Hence we have 
\bas{
g_1^v(X) \leq g_1^{u^{(j)}}(X) + |g_1^v(X) - g_1^{u^{(j)}(X)}|\leq \max_j g_1^{u^j} + Z_1 \norm{v-u^{(j)}}
}
As a result, $Z_1 \leq 2 \max_{j=1,\cdots, K} g_1^{u^{(j)}}(X)$. Therefore it is sufficient to bound $g(X)$ for a fixed $u^{(j)} \in \cS^{d-1}$. By Lemma~\ref{lem:covering_number}, covering number $K\le \exp(2d)$.

By Lemma~\ref{lem:zuj_minus_ezuj}, we have with probability at least $1-\exp\left(-cd\log n\right)$, 
$ g_1^{u_j} = \tilde{O}(\max\{ R_n(\cF_1^u), (1+\rmax)d/\sqrt{n} \})$. 
Plugging in the Rademacher complexity from Proposition \ref{prop:rademacher_bound}, and applying union bound,
we have
\bas{
Z_1 \le& 2\max_j g_1^{u_j} \le  \tilde{O}(\max\{ n^{-1/2}M^3(1+\rmax)^3\sqrt{d}\max\{1,\log(\kappa)\}, (1+\rmax)d/\sqrt{n}\})
}
with probability at least $1-\exp\left(2d-cd\log n\right)=1-\exp\left(-c'd\log n\right)$.
\end{proof}

\begin{proof}[Proof of Theorem~\ref{th:sample}]
We show 
the result by induction.
When $t=1$, 
\bas{
\norm{\bmu^{1}-\bmu^*}_2=\norm{G_n(\bmu^0)-\bmu^*}&\leq \norm{G(\bmu^0)-\bmu^*}+\norm{G_n(\bmu^0)-G(\bmu^0)}\\
&\leq \zeta\norm{\bmu^0-\bmu^*}+\epsilon^{\text{unif}}(n)
}
If $\norm{\bmu_i^t-\bmu_i^*}<a$ and $\epsilon^{\text{unif}}(n)\leq (1-\zeta)a$, we have $\norm{\bmu_i^{t+1}-\bmu_i^*}\leq a$. So $\bmu^t$ lies in the contraction region for $\forall t\ge 0$.

Then iteratively we get
\bas{
\norm{\bmu^t-\bmu^*} &\leq \zeta \norm{\bmu^{t-1}-\bmu^*}+\epsilon^{\text{unif}}(n)\\
&\leq \zeta^t \norm{\bmu^{0}-\bmu^*}+\sum_{i=0}^{t-1}\zeta^i\epsilon^{\text{unif}}(n)\\
&\leq \zeta^t \norm{\bmu^{0}-\bmu^*}+\frac{1}{1-\zeta}\epsilon^{\text{unif}}(n)
}
with probability at least $1-\delta$.
\end{proof}

\section{Improving the sample convergence rate}
\label{app:tighter_sample_bound}

We adapt Theorem~1 of  \cite{kontorovich2014concentration} to achieve the following concentration inequality.

\begin{theorem}
Let $g(X)$ be defined in Eq.~\eqref{eq:def_g} with $i=1$ and some fixed $u$, then

\bas{
P \left( g(X)-\bE g(X) > 2(1+3\rmax)\sqrt{\frac{d\log n}{n}} \right) \le & n^{ -d }
}
\label{th:extension_aryeh}
\end{theorem}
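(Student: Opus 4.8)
\emph{Proof proposal.} The plan is to adapt Theorem~1 of \cite{kontorovich2014concentration}, which upgrades McDiarmid's bounded-difference inequality to metric probability spaces of finite \emph{sub-gaussian diameter}; this is exactly our situation, since the only obstruction to a bounded-difference estimate for $g$ is that a resampled Gaussian coordinate can be large, but only in a sub-gaussian way. Fix $i=1$ and the unit vector $u$, and write $\phi(x):=\sup_{\bmu\in\bA}\lvert w_1(x;\bmu)\ip{x-\bmu_1,u}\rvert$, where $Z_x$ denotes the latent component of a mixture point $x$.

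First I would record the one-coordinate difference bound. Repeating the computation from the proof of Lemma~\ref{lem:zuj_minus_ezuj}, if $\bX,\bX'$ are two $n$-samples agreeing in every coordinate but the $j$-th, then
\bas{
\lvert g(\bX)-g(\bX')\rvert\le\frac1n\bigl(\phi(X_j)+\phi(X_j')\bigr).
}
Since $w_1\le1$ and, under the hypotheses of Theorem~\ref{th:population_contraction}, $\norm{\bmu_1-\bmu_1^*}\le a\le\rmax/2$, the triangle inequality gives $\phi(x)\le\lvert\ip{x-\bmu^*_{Z_x},u}\rvert+\tfrac32\rmax$. Because $X_j-\bmu^*_{Z_j}\sim\cN(0,I_d)$ no matter what $Z_j$ is, the scalar $\ip{X_j-\bmu^*_{Z_j},u}$ is standard one-dimensional Gaussian, so $\phi(X_j)$ is stochastically dominated by $\lvert N\rvert+\tfrac32\rmax$ with $N\sim\cN(0,1)$ --- a sub-gaussian random variable whose $\psi_2$-norm is $O(1+\rmax)$, as is that of its symmetrization. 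I would stress the point that makes the whole estimate work: only the one-dimensional projection $\ip{\cdot,u}$ enters $\phi$, so there is no hidden $\sqrt d$; bounding instead by $\norm{x-\bmu_1}$ would cost a factor $\sqrt d$ and reduce the final exponent from order $d\log n$ to order $\log n$.

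Next I would feed this into the concentration bound. Equip the $j$-th coordinate space ($\bR^d$ with the mixture law) with the metric $\rho_j(x,x')=\mathbf{1}[x\ne x']\bigl(\phi(x)+\phi(x')\bigr)$; the display above says $n\,g$ is $1$-Lipschitz with respect to $\rho=\sum_j\rho_j$, while the preceding paragraph bounds the sub-gaussian diameter of each $(\bR^d,\rho_j)$ by a quantity of order $1+\rmax$. Theorem~1 of \cite{kontorovich2014concentration} then yields, for some constant $c_0$ and all $t>0$,
\bas{
P\bigl(g(X)-\bE g(X)>t\bigr)\le\exp\!\Bigl(-\frac{c_0\,n\,t^2}{(1+\rmax)^2}\Bigr).
}
(One may obtain the same shape by a direct Doob-martingale / Chernoff computation: the increments $D_j=\bE[g\mid X_1,\dots,X_j]-\bE[g\mid X_1,\dots,X_{j-1}]$ are mean zero given the past and satisfy $\lvert D_j\rvert\le\tfrac1n(\lvert N_j\rvert+c(1+\rmax))$ with $N_j\sim\cN(0,1)$ independent of the past, whence $\bE[e^{\lambda D_j}\mid X_1,\dots,X_{j-1}]\le\exp(c'\lambda^2(1+\rmax)^2/n^2)$ for $\lambda$ up to order $n/(1+\rmax)$, and multiplying over $j$ and optimizing closes the bound.) Finally, substituting $t=2(1+3\rmax)\sqrt{d\log n/n}$ makes the exponent of order $d\log n$ (using $1+3\rmax\ge1+\rmax$); provided the constant in the sub-gaussian-diameter estimate is tracked with the slack afforded by the gap between $1+3\rmax$ and the genuine per-step scale, this gives $P(g(X)-\bE g(X)>t)\le n^{-d}$.

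The conceptual content --- reduce to a one-dimensional Gaussian projection, recognise a sub-gaussian (rather than bounded) diameter, and apply Kontorovich's extension of McDiarmid --- is routine; the real work is in the constants. Concretely one must (i) keep the per-coordinate displacement one-dimensional so the diameter is $O(1+\rmax)$ and not $O(\sqrt d+\rmax)$, and (ii) estimate the sub-gaussian diameter (equivalently, the relevant moment generating function of $\lvert N\rvert+\tfrac32\rmax$) sharply enough that the resulting tail is $n^{-d}$ and not merely $n^{-\Omega(1)}$ --- this bookkeeping is the main obstacle.
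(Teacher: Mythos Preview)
Your proposal is correct and follows essentially the same route as the paper. Both arguments identify the crucial point that only the one-dimensional projection $\ip{\cdot,u}$ enters the coordinate difference (so the sub-gaussian scale is $O(1+\rmax)$, not $O(\sqrt d+\rmax)$), and both invoke the Kontorovich extension of McDiarmid; the paper simply writes out the Doob-martingale computation you sketch as an alternative, using the symmetrization identity that $\epsilon\lvert\ip{y-\bmu^*_{Z_y},u}\rvert\stackrel{d}{=}\cN(0,1)$ together with $e^t+e^{-t}\le e^s+e^{-s}$ for $|t|\le s$ to get the exact constant $(1+3\rmax)^2/n^2$ in the conditional MGF, which is precisely the bookkeeping you flag as the main obstacle.
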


\begin{proof}
We will use the notation $X_i^j = (X_i, \cdots, X_j)$ for all sequences, and sequence concatenation is denoted multiplicatively: $x_i^jx_{j+1}^k = x_i^k$. We will also use that fact that $X_1^0$ is the empty set.
The proof will proceed via the Azuma-Hoeffding-mcDiarmid method of martingale differences. Defining $V_i = \bE[g|X_1^i]-\bE[g|X_1^{i-1}]$, we see that $g(X)-E[g(X)]=\sum_i V_i$. We also note that $V_i$ is a function $X_1^i$.
We have,
\bas{
\bE[g|X_1^i] = \sum_{x_{i+1}^n\in \cX_{i+1}^n} P(x_{i+1}^n)g(X_1^ix_{i+1}^n),
}
which along with Jensen's inequality gives:

\bas{
e^{\lambda V_i}&=e^{\lambda\sum_{x_i',x_{i+1}^n} P(x_{i+1}^n)P(x'_i)(g(X_1^{i-1},x_i,x_{i}^n)-g(X_1^{i-1},x'_i,x_{i}^n))}\\
&\leq \sum_{x_i',x_{i+1}^n} P(x_{i+1}^n)P(x'_i)e^{\lambda(g(X_1^{i-1},x_i,x_{i}^n)-g(X_1^{i-1},x'_i,x_{i}^n))}\\
\bE\left[e^{\lambda V_i}|X_{1}^{i-1}\right]&\leq \sum_{x_{i+1}^n} P(x_{i+1}^n)\sum_{x_i,x_i'}P(x_i)P(x'_i)e^{\lambda(g(X_1^{i-1},x_i,x_{i}^n)-g(X_1^{i-1},x'_i,x_{i}^n))}\\
}
For fixed $X_1^{i-1}\in \cX_1^{i-1}$ and $x_{i+1}^n\in \cX_{i+1}^n$, define $F_i:\cX_i\to \bR$ by $F_i(y) = g(X_1^{i-1}yx_{i+1}^n)$. Using the definition of $g(X)$ and denoting by $\tilde{\bmu}$ the $\bmu$ that achieves the supremum in $g(X_1^{i-1}yx_{i+1}^n)$, we get: 
\bas{
F_i(y)-F_i(y') =& \sup_{\bmu\in \bA}\left(\frac{1}{n}\sum_{i=1}^n w_1(X_i;\bmu)\ip{X_i-\bmu_1,u}-\bE_X w_1(X;\bmu)\ip{X-\bmu_1,u}\right) \\
&\qquad \qquad - \sup_{\bmu\in \bA} \left(\frac{1}{n}\sum_{i=1}^n w_1(X'_i;\bmu)\ip{X'_i-\bmu_1,u}-\bE_X w_1(X';\bmu)\ip{X'-\bmu_1,u}\right) \\
\le & \left(\frac{1}{n}\sum_{i=1}^n w_1(X_i; \tilde{\bmu})\ip{X_i-\tilde{\bmu}_1,u}-\bE_X w_1(X;\tilde{\bmu})\ip{X- \tilde{\bmu}_1,u}\right) \\
&\qquad \qquad -  \left(\frac{1}{n}\sum_{i=1}^n w_1(X'_i; \tilde{\bmu})\ip{X'_i- \tilde{\bmu}_1,u}-\bE_X w_1(X';\tilde{\bmu})\ip{X'- \tilde{\bmu}_1,u}\right) \\
=& \frac{1}{n} \left( w_1(y; \tilde{\bmu})\ip{y -\tilde{\bmu}_1,u} -  w_1(y'; \tilde{\bmu})\ip{y' - \tilde{\bmu}_1,u}\right)
}
Take $\bar{\bmu}$ as the maximizer for the supremum of $X'$ we get the other side of the inequality. Hence
\bas{
|F_i(y)-F_i(y')| \le& \frac{1}{n} \sup_{\bmu\in \bA} \left( |\ip{y-\bmu,u}| + |\ip{y'-\bmu,u}| \right)\\
\le& \frac{1}{n}\left( |\ip{y-\bmu^*_{Z_y},u}| + |\ip{y'-\bmu^*_{Z_y'},u}| + 4\rmax \right) :=\rho(y,y')
}

Note for all $t$ such that $|t|<s$, we have $e^t+e^{-t}\le e^s+e^{-s}$, we have
\bas{
	e^{\lambda(F(y)-F(y'))}+e^{-\lambda(F(y)-F(y'))} \le e^{\lambda \rho(y,y')}+e^{-\lambda \rho(y,y')}
}

By symmetry,  we have:
\bas{
\sum_{y,y'} P(y)P(y') e^{\lambda(F(y)-F(y'))}
 \le& \frac{1}{2}\left( \bE_{y,y'}e^{\lambda \rho(y,y')}+ \bE_{y,y'}e^{-\lambda \rho(y,y')} \right)\\
 =& \bE_\epsilon \bE_{y,y'}e^{\lambda \epsilon \rho(y,y')}\stackrel{(i)}{\leq} e^{\lambda^2/n^2} E[e^{4\lambda\epsilon\rmax/n}]\\
\stackrel{(ii)}{\leq}  & e^{\lambda^2/n^2+8\lambda^2\rmax^2/n^2}\leq e^{\lambda^2(1+3\rmax)^2/n^2}
}

where $\epsilon$ is a Rademacher random variable independent of $y,y'$. Note that $\epsilon|\ip{y-\bmu^*_{Z_y},u}|$ is identically distributed as a Gaussian random variable with mean zero and variance $1$. Also since by construction $y$ and $y'$ are independent, inequality $(i)$ follows using the moment generating function of a Gaussian. Inequality $(ii)$ follows from Hoeffding's Lemma (Eq (3.16) in \cite{hoeffding1963probability}) since $\epsilon\in[-1,1]$. Therefore,
\ba{
\bE\left[e^{\lambda V_i}|X_{1}^{i-1}\right]&\le e^{\lambda^2/n^2+8\lambda^2\rmax^2/n^2}\leq e^{\lambda^2(1+3\rmax)^2/n^2}
\label{eq:upper_bound_elambdaV}
}
Applying standard Markov inequality, we have
\bas{
P(g(X)-\bE g(X)>t) =& P\left( \sum_{i=1}^n V_i>t \right)
\le  e^{-\lambda t}\bE \left[ \Pi_{i=1}^n e^{\lambda V_i} \right]\\
\stackrel{(i)}{=}& e^{-\lambda t}\bE\left[\bE \left[ \Pi_{i=1}^n e^{\lambda V_i }| X_1^{n-1} \right]\right]\\
=& e^{-\lambda t}\bE\left[ \Pi_{i=1}^{n-1} e^{\lambda V_i }\bE \left[e^{\lambda V_n }| X_1^{n-1} \right]\right]\\
\stackrel{(ii)}{=}&e^{-\lambda t}\bE\left[ \Pi_{i=1}^{n} \bE\left[e^{\lambda V_i }|X_1^{i-1}] \right]\right]\\
\stackrel{(iii)}{\le} & \exp\left( -\lambda t+\frac{\lambda^2(1+3\rmax)^2}{n}\right)
}
where step $(ii)$ follows by applying step $(i)$ repeatedly and step $(iii)$ follows by applying Eq~\eqref{eq:upper_bound_elambdaV}.
Optimizing over $\lambda$ we have 
$P(g(X)-\bE g(X)>t)\le  \exp\left( -\frac{nt^2}{4(1+\rmax)^2} \right)$.
Taking $t = 2(1+3\rmax)\sqrt{\frac{d\log n}{n}}$, we have
\bas{
P \left( g(X)-\bE g(X) > 2(1+3\rmax)\sqrt{\frac{d\log n}{n}} \right) \le & n^{ -d }
}

\end{proof}

With Theorem~\ref{th:extension_aryeh} we are able to improve Theorem~\ref{th:sample_eps} and have the following.

\begin{theorem}[Rate-optimal sample-based EM guarantee]
Denote $\mathbb{A}$ as the contraction region $\Pi_{i=1}^M\bB(\bmu_i^*,a)$.  Under the condition of Theorem \ref{th:population_contraction}, with probability at least $1-\exp\left( -cd \log n\right)$, 
\bas{
\sup_{\bmu \in \mathbb{A}}\norm{G^{(i)}(\bmu)-G_n^{(i)}(\bmu)}<\epsilon^{\text{unif}}(n); \qquad \forall i\in [M]
}
where 
\ba{
\epsilon^{\text{unif}}(n)=cM^{3/2}(1+3\rmax)^3\max\{1,\log(\kappa)\}\sqrt{\frac{d\log n}{n}}.
\label{eq:improved_eps}
}
\label{th:improved_eps}
\end{theorem}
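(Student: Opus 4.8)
The plan is to rerun the discretization-plus-union-bound argument used to prove Theorem~\ref{th:sample_eps}, but with the sharper martingale concentration bound of Theorem~\ref{th:extension_aryeh} in place of the extension of McDiarmid's inequality (Lemma~\ref{lem:zuj_minus_ezuj}). Fix a cluster index $i\in[M]$ and, without loss of generality, take $i=1$. As before, write $Z_1=\sup_{\bmu\in\mathbb{A}}\norm{G^{(1)}(\bmu)-G_n^{(1)}(\bmu)}=\sup_{u\in\cS^{d-1}}g_1^u(X)$, fix a $\tfrac{1}{2}$-covering $\{u^{(1)},\dots,u^{(K)}\}$ of $\cS^{d-1}$ with $K\le\exp(2d)$ (Lemma~\ref{lem:covering_number}), and use the Lipschitz dependence of $g_1^u$ on $u$ (as in the proof of Theorem~\ref{th:sample_eps}) to reduce to $Z_1\le 2\max_{j\le K}g_1^{u^{(j)}}(X)$.

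Next I would bound each $g_1^{u^{(j)}}(X)$ by splitting it into its expectation and its fluctuation. For the expectation, the symmetrization lemma (Lemma~\ref{lem:symmetrization}) together with Proposition~\ref{prop:rademacher_bound} gives
\[
\bE\, g_1^{u^{(j)}}(X)\le 2R_n(\cF_1^{u^{(j)}})= O\big(M^{3/2}(1+\rmax)^3\sqrt{d}\,\max\{1,\log\kappa\}/\sqrt{n}\big).
\]
For the fluctuation, Theorem~\ref{th:extension_aryeh} applied with this fixed $u^{(j)}$ yields $g_1^{u^{(j)}}(X)-\bE\, g_1^{u^{(j)}}(X)\le 2(1+3\rmax)\sqrt{d\log n/n}$ with probability at least $1-n^{-d}$. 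Since $\sqrt{d\log n/n}$ dominates $\sqrt{d}/\sqrt{n}$ and $(1+3\rmax)^3\ge(1+\rmax)^3$, adding the two pieces gives $g_1^{u^{(j)}}(X)\le \tfrac{1}{2}\epsilon^{\text{unif}}(n)$ with $\epsilon^{\text{unif}}(n)$ as in Eq.~\eqref{eq:improved_eps}.

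Then I would union-bound over the $K\le\exp(2d)$ covering directions and over the $M$ cluster indices: the total failure probability is at most $MK n^{-d}\le M\exp(2d-d\log n)$, which is at most $\exp(-cd\log n)$ once $n$ is large enough. On the complementary event, $\sup_{\bmu\in\mathbb{A}}\norm{G^{(i)}(\bmu)-G_n^{(i)}(\bmu)}=Z_i\le 2\max_j g_i^{u^{(j)}}(X)< \epsilon^{\text{unif}}(n)$ simultaneously for every $i\in[M]$, which is exactly the claim.

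The computation is largely bookkeeping: all the hard analytic work -- the Rademacher bound of Proposition~\ref{prop:rademacher_bound} and the Azuma--McDiarmid-type inequality of Theorem~\ref{th:extension_aryeh} -- is already in hand, so the only point I expect to need care is checking that the $\exp(2d)$ covering-number blow-up (and the factor $M$) is swallowed by $n^{-d}$, i.e.\ that $2d-d\log n\le -cd\log n$ for $n$ beyond a fixed threshold. The gain of the factor $\sqrt{\log n}$ over Theorem~\ref{th:sample_eps} is structural: Theorem~\ref{th:extension_aryeh} never conditions on a high-probability bounded-difference event, so we avoid the extra $\sqrt{d}\,\mathrm{polylog}(n)$ that Theorem~\ref{th:extension_mcdiarmid} forced into the bound and also avoid having to compare the conditional expectation with the true expectation.
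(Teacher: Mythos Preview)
Your proposal is correct and mirrors the paper's own proof almost exactly: split $g_i^{u}(X)$ into expectation plus fluctuation, bound the former by $2R_n(\cF_i^u)$ via symmetrization and Proposition~\ref{prop:rademacher_bound}, bound the latter by Theorem~\ref{th:extension_aryeh}, then discretize the sphere and union-bound. Your only addition is the explicit union bound over the $M$ cluster indices, which the paper's write-up glosses over but which costs only a harmless $\log M$ in the exponent.
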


\begin{proof}
Combining Proposition~\ref{prop:rademacher_bound}, Lemma~\ref{lem:symmetrization} and Theorem~\ref{th:extension_aryeh}, we have for any $d$-dimensional unit vector $u$, with probability at least $1-n^{-d}$, 
\bas{
g_i^u(X) \le& |g_i^u(X)-\bE g_i^u(X)|+\bE g_i^u(X)\\
\le & 2(1+3\rmax) \sqrt{\frac{d\log n}{n}} + 2\mathcal{R}_n(\cF_i^u)\\
\le & cM^{3/2}(1+3\rmax)^3\max\{1,\log(\kappa)\}\sqrt{\frac{d\log n}{n}}
}
By the same covering argument as in the proof of Theorem~\ref{th:sample_eps}, we have 
\bas{
\sup_{\bmu \in \mathbb{A}}\norm{G^{(i)}(\bmu)-G_n^{(i)}(\bmu)} \le 2 \max_{j=1,\cdots, K} g_i^{u^{(j)}}(X)
}
Using $K\le e^{2d}$ from Lemma~\ref{lem:covering_number} along with union bound, we have 
\bas{
\sup_{\bmu \in \mathbb{A}}\norm{G^{(i)}(\bmu)-G_n^{(i)}(\bmu)} \le cM^{3/2}(1+3\rmax)^3\max\{1,\log(\kappa)\}\sqrt{\frac{d\log n}{n}}
}
with probability at least $1-(ne^{-2})^{-d}$.

\end{proof}
\section{Initialization}
\label{app:initialization}
This section provides the number of initializations needed for the condition in Theorem~\ref{th:population_contraction}.
\begin{proposition}
Let $\pi_i = \frac{1}{M}, \forall i\in [M]$, $R_{\min}=\Omega(\sqrt{d})$, and let $a$ satisfy the conditions in Theorem~\ref{th:population_contraction}. Then 
with $\frac{\log(1/\delta)}{\sqrt{2\pi M}}\left(\frac{e}{1-e^{-a\sqrt{d}/2}}\right)^M$ initializations, the probability of having at least one good initialization is greater than $1-\delta$.
\label{prop:number_of_iterations}
\end{proposition}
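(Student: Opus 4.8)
The plan is to analyze the random initialization scheme of~\cite{jin2016local}, in which one \emph{round} of initialization draws $M$ i.i.d.\ points $X_1^0,\dots,X_M^0$ from $\text{GMM}(\bm\pi,\bmu^*,I_d)$ and sets the $M$ initial centers to be these points. Call a round \emph{good} if there is a bijection $\sigma:[M]\to[M]$ with $\|X_j^0-\bmu_{\sigma(j)}^*\|\le a$ for all $j\in[M]$; since $\pi_i=1/M$ for every $i$, the model is label-symmetric, so relabelling the drawn points by $\sigma$ puts the initialization into the contraction region $\Pi_{i=1}^M\bB(\bmu_i^*,a)$ of Theorem~\ref{th:population_contraction}. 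First I would note that the condition on $a$ in Theorem~\ref{th:population_contraction} forces $2a<\rmin$, so the balls $\bB(\bmu_i^*,a)$ are pairwise disjoint and each draw lands in at most one of them.

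Next I would lower bound the probability of a good round. For a fixed bijection $\sigma$, let $A_\sigma=\bigcap_{j=1}^M\{Z_j=\sigma(j),\ \|X_j^0-\bmu_{\sigma(j)}^*\|\le a\}$, where $Z_j$ is the hidden component of $X_j^0$. Conditioned on $Z_j=\sigma(j)$ we have $X_j^0-\bmu_{\sigma(j)}^*\sim\cN(0,I_d)$, so by Lemma~\ref{lem:norm_of_gaussian} (applicable since $a\ge 2\sqrt d$ in the regime considered) and $\pi_i=1/M$,
\[
P\big(Z_j=\sigma(j),\ \|X_j^0-\bmu_{\sigma(j)}^*\|\le a\big)=\tfrac1M\,P_{Z\sim\cN(0,I_d)}(\|Z\|\le a)\ \ge\ \tfrac1M\big(1-e^{-a\sqrt d/2}\big).
\]
By independence of the $M$ draws, $P(A_\sigma)\ge M^{-M}(1-e^{-a\sqrt d/2})^M$. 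The events $A_\sigma$ for distinct $\sigma$ are pairwise disjoint (they assign distinct component labels to at least one draw) and each of them forces the $M$ draws into $M$ distinct balls, hence implies a good round. Summing over the $M!$ bijections, $P(\text{good round})\ge M!\,M^{-M}(1-e^{-a\sqrt d/2})^M$, and Stirling's bound $M!\ge\sqrt{2\pi M}\,(M/e)^M$ gives $P(\text{good round})\ge\sqrt{2\pi M}\,\big((1-e^{-a\sqrt d/2})/e\big)^M=:\rho$.

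Finally, running $N$ independent rounds, the probability that no round is good is at most $(1-\rho)^N\le e^{-N\rho}$, which is $\le\delta$ once $N\ge\rho^{-1}\log(1/\delta)=\frac{\log(1/\delta)}{\sqrt{2\pi M}}\big(\frac{e}{1-e^{-a\sqrt d/2}}\big)^M$, matching the claimed count. The main obstacle is the combinatorial step rather than any analytic estimate: one must verify carefully that "all $M$ draws fall into distinct balls" is genuinely sufficient for membership in the contraction region (which is where label-symmetry from $\pi_i=1/M$ is used), and that the $A_\sigma$ are disjoint so their probabilities add rather than merely union-bound. The only other thing to check is that the range of $a$ permitted by Theorem~\ref{th:population_contraction} under $\rmin=\Omega(\sqrt d)$ (with a sufficiently large constant) indeed contains values with $a\ge 2\sqrt d$, so that Lemma~\ref{lem:norm_of_gaussian} applies; everything else is a one-line computation.
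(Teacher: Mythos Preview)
Your proposal is correct and follows essentially the same route as the paper: lower-bound the probability of a single good round by $\sqrt{2\pi M}\big((1-e^{-a\sqrt d/2})/e\big)^M$ via the combinatorial count $M!/M^M$ for all draws hitting distinct clusters, the per-draw tail bound from Lemma~\ref{lem:norm_of_gaussian}, and Stirling, then finish with the standard independent-trials argument. Your treatment is in fact slightly more careful than the paper's in making explicit the disjointness of the events $A_\sigma$, the role of label symmetry, and the condition $a\ge 2\sqrt d$ needed for Lemma~\ref{lem:norm_of_gaussian}.
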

The proof follows directly from some combinatorial arguments and Lemma~\ref{lem:norm_of_gaussian}.

\begin{proof}[Proof of Proposition~\ref{prop:number_of_iterations}]
Define event $\mathcal{E}_{init}(a)=\{ \mu_i^0\in\bB_{\mu_i^*}(a),\  \forall i \in [M] \}$.
By equal weights assumption, the probability of randomly sampled $M$ points having exactly one from each cluster is $\frac{M!}{M^M}$. By Sterling's formula, we have  $M!\geq \sqrt{2\pi M}e^{-M}$.
For each center, by Lemma~\ref{lem:norm_of_gaussian} we have the probability of it lying in $\bB_{\mu_i^*}(a)$ is no less than $1-e^{-a\sqrt{d}/2 }$. Hence 
\bas{
P(\mathcal{E}_{init}(a)) \ge \sqrt{2\pi M}\left(\frac{1-e^{-a\sqrt{d}/2}}{e}\right)^{M}=:p
}
Now assume the number of initializations is $T$, in order to satisfy the required property, we need $(1-P(\mathcal{E}_{init}(a)))^T\le \delta$. A sufficient condition is 
\bas{
T\ge \frac{\log(1/\delta)}{\log\left(1 - p\right)}
}
Note that $\log(1-x)\ge -x, \forall 0\le x\le 0.5$. Since $p<.5$ for $M\geq 2$, we see that as long as 
$T\ge \frac{\log(1/\delta)}{\sqrt{2\pi M}}\left(\frac{e}{1-e^{-a\sqrt{d}/2}}\right)^M$, with probability $1-\delta$ we will have a good initialization.
\end{proof}
\begin{remark}
	Perhaps not so surprisingly, the above theorem requires a stronger separation condition, i.e. $\rmin=\Omega(\sqrt{d})$, whereas all our analysis requires $\rmin=\Omega(\sqrt{d_0})$ where $d_0:=\min(d,M)$ can be thought of as effective dimension. This difficulty can be alleviated by using projections schemes similar to those in~\cite{awasthi2012improved,kumar2010clustering}. We leave this for future work.
	\end{remark}
\bk

\bibliographystyle{plain}
\bibliography{reference}

\begin{thebibliography}{10}

\bibitem{awasthi2012improved}
Pranjal Awasthi and Or~Sheffet.
\newblock Improved spectral-norm bounds for clustering.
\newblock In {\em APPROX-RANDOM}, pages 37--49. Springer, 2012.

\bibitem{balakrishnan2014statistical}
Sivaraman Balakrishnan, Martin~J. Wainwright, and Bin Yu.
\newblock Statistical guarantees for the em algorithm: From population to
  sample-based analysis.
\newblock {\em Ann. Statist.}, 45(1):77--120, 02 2017.

\bibitem{boucheron2013concentration}
St{\'e}phane Boucheron, G{\'a}bor Lugosi, and Pascal Massart.
\newblock {\em Concentration inequalities: A nonasymptotic theory of
  independence}.
\newblock Oxford university press, 2013.

\bibitem{combes2015extension}
Richard Combes.
\newblock An extension of mcdiarmid's inequality.
\newblock {\em arXiv preprint arXiv:1511.05240}, 2015.

\bibitem{conniffe1987expected}
Denis Conniffe.
\newblock Expected maximum log likelihood estimation.
\newblock {\em The Statistician}, pages 317--329, 1987.

\bibitem{dasgupta1999learning}
Sanjoy Dasgupta.
\newblock Learning mixtures of gaussians.
\newblock In {\em Foundations of Computer Science, 1999. 40th Annual Symposium
  on}, pages 634--644. IEEE, 1999.

\bibitem{dasgupta2000two}
Sanjoy Dasgupta and Leonard~J Schulman.
\newblock A two-round variant of em for gaussian mixtures.
\newblock In {\em Proceedings of the Sixteenth conference on Uncertainty in
  artificial intelligence}, pages 152--159. Morgan Kaufmann Publishers Inc.,
  2000.

\bibitem{dempster1977maximum}
Arthur~P Dempster, Nan~M Laird, and Donald~B Rubin.
\newblock Maximum likelihood from incomplete data via the em algorithm.
\newblock {\em Journal of the royal statistical society. Series B
  (methodological)}, pages 1--38, 1977.

\bibitem{friggstad2016local}
Zachary Friggstad, Mohsen Rezapour, and Mohammad~R Salavatipour.
\newblock Local search yields a ptas for k-means in doubling metrics.
\newblock In {\em Foundations of Computer Science (FOCS), 2016 IEEE 57th Annual
  Symposium on}, pages 365--374. IEEE, 2016.

\bibitem{hoeffding1963probability}
Wassily Hoeffding.
\newblock Probability inequalities for sums of bounded random variables.
\newblock {\em Journal of the American statistical association},
  58(301):13--30, 1963.

\bibitem{jin2016local}
Chi Jin, Yuchen Zhang, Sivaraman Balakrishnan, Martin~J Wainwright, and
  Michael~I Jordan.
\newblock Local maxima in the likelihood of gaussian mixture models: Structural
  results and algorithmic consequences.
\newblock In {\em Advances in Neural Information Processing Systems}, pages
  4116--4124, 2016.

\bibitem{kontorovich2014concentration}
Aryeh Kontorovich.
\newblock Concentration in unbounded metric spaces and algorithmic stability.
\newblock In {\em Proceedings of the 31st International Conference on Machine
  Learning (ICML-14)}, pages 28--36, 2014.

\bibitem{kumar2010clustering}
Amit Kumar and Ravindran Kannan.
\newblock Clustering with spectral norm and the k-means algorithm.
\newblock In {\em Foundations of Computer Science (FOCS), 2010 51st Annual IEEE
  Symposium on}, pages 299--308. IEEE, 2010.

\bibitem{lange1995gradient}
Kenneth Lange.
\newblock A gradient algorithm locally equivalent to the em algorithm.
\newblock {\em Journal of the Royal Statistical Society. Series B
  (Methodological)}, pages 425--437, 1995.

\bibitem{ledoux2013probability}
Michel Ledoux and Michel Talagrand.
\newblock {\em Probability in Banach Spaces: isoperimetry and processes}.
\newblock Springer Science \& Business Media, 2013.

\bibitem{lu2016statistical}
Yu~Lu and Harrison~H Zhou.
\newblock Statistical and computational guarantees of lloyd's algorithm and its
  variants.
\newblock {\em arXiv preprint arXiv:1612.02099}, 2016.

\bibitem{matouvsek2000approximate}
Jir{\i} Matou{\v{s}}ek.
\newblock On approximate geometric k-clustering.
\newblock {\em Discrete \& Computational Geometry}, 24(1):61--84, 2000.

\bibitem{maurer2016vector}
Andreas Maurer.
\newblock A vector-contraction inequality for rademacher complexities.
\newblock In {\em International Conference on Algorithmic Learning Theory},
  pages 3--17. Springer, 2016.

\bibitem{mcdiarmid1989method}
Colin McDiarmid.
\newblock On the method of bounded differences.
\newblock {\em Surveys in combinatorics}, 141(1):148--188, 1989.

\bibitem{mixon2016clustering}
Dustin~G Mixon, Soledad Villar, and Rachel Ward.
\newblock Clustering subgaussian mixtures by semidefinite programming.
\newblock {\em arXiv preprint arXiv:1602.06612}, 2016.

\bibitem{mohri2012foundations}
Mehryar Mohri, Afshin Rostamizadeh, and Ameet Talwalkar.
\newblock {\em Foundations of machine learning}.
\newblock MIT press, 2012.

\bibitem{naim2012convergence}
Iftekhar Naim and Daniel Gildea.
\newblock Convergence of the em algorithm for gaussian mixtures with unbalanced
  mixing coefficients.
\newblock {\em arXiv preprint arXiv:1206.6427}, 2012.

\bibitem{redner1984mixture}
Richard~A Redner and Homer~F Walker.
\newblock Mixture densities, maximum likelihood and the em algorithm.
\newblock {\em SIAM review}, 26(2):195--239, 1984.

\bibitem{vempala2004spectral}
Santosh Vempala and Grant Wang.
\newblock A spectral algorithm for learning mixture models.
\newblock {\em Journal of Computer and System Sciences}, 68(4):841--860, 2004.

\bibitem{vershynin2010introduction}
Roman Vershynin.
\newblock Introduction to the non-asymptotic analysis of random matrices.
\newblock {\em arXiv preprint arXiv:1011.3027}, 2010.

\bibitem{wu1983convergence}
CF~Jeff Wu.
\newblock On the convergence properties of the em algorithm.
\newblock {\em The Annals of statistics}, pages 95--103, 1983.

\bibitem{xu2016global}
Ji~Xu, Daniel~J Hsu, and Arian Maleki.
\newblock Global analysis of expectation maximization for mixtures of two
  gaussians.
\newblock In D.~D. Lee, M.~Sugiyama, U.~V. Luxburg, I.~Guyon, and R.~Garnett,
  editors, {\em Advances in Neural Information Processing Systems 29}, pages
  2676--2684. Curran Associates, Inc., 2016.

\bibitem{xu1996convergence}
Lei Xu and Michael~I Jordan.
\newblock On convergence properties of the em algorithm for gaussian mixtures.
\newblock {\em Neural computation}, 8(1):129--151, 1996.

\bibitem{yan2016robustness}
Bowei Yan and Purnamrita Sarkar.
\newblock On robustness of kernel clustering.
\newblock In {\em Advances in Neural Information Processing Systems}, pages
  3090--3098, 2016.

\end{thebibliography}

\end{document}